\documentclass[11pt]{amsart}

\usepackage{booktabs}
\usepackage{tabularx}
\usepackage{todonotes}

\usepackage[mathscr]{eucal}
\usepackage{mathpazo}
\usepackage{amsfonts, mathrsfs, amscd}
\usepackage[all]{xy}
\usepackage{url}
\usepackage{amssymb, amsmath, amsthm}
\usepackage{stackrel}

\usepackage{geometry}
\geometry{top=1.0in, bottom=1.0in, left=1.5in, right=1.5in}
\usepackage{array}
\usepackage{color}

\newtheorem{theorem}{Theorem}[section]

\newtheorem{lemma}[theorem]{Lemma}
\newtheorem{corollary}[theorem]{Corollary}
\newtheorem{conjecture}[theorem]{Conjecture}
\newtheorem{prop}[theorem]{Proposition}

\theoremstyle{definition}
\newtheorem{definition}[theorem]{Definition}

\theoremstyle{remark}
\newtheorem{remark}[theorem]{Remark}

\newtheorem*{convention}{Convention}

\DeclareMathOperator{\SL}{SL}   % SL: special linear group
\DeclareMathOperator{\Res}{Res}
\DeclareMathOperator{\ch}{ch}
\DeclareMathOperator{\mult}{mult}
\DeclareMathOperator{\age}{age}
\DeclareMathOperator{\id}{id}

\DeclareMathAlphabet{\mathpzc}{OT1}{pzc}{m}{it}

%%% (mod p) %%
\makeatletter
\def\imod#1{\allowbreak\mkern10mu({\operator@font mod}\,\,#1)}
\makeatother

\newcommand{\ZZ}{\mathbb{Z}}
\newcommand{\LL}{\mathbb{L}}
\newcommand{\UU}{\mathbb{U}}
\newcommand{\CC}{\mathbb{C}}
\newcommand{\PP}{\mathbb{P}}
\newcommand{\ii}{\mathbb{1}}

\newcommand{\sH}{\mathscr{H}} 
\newcommand{\sL}{\mathscr{L}}
\newcommand{\sF}{\mathscr{F}}
\newcommand{\sV}{\mathscr{V}}
\newcommand{\sC}{\mathscr{C}}

\newcommand{\cO}{\mathcal{O}}   %% trivial bundle
\newcommand{\cM}{\overline{\mathcal{M}}}  %% Moduli of curves
\newcommand{\cW}{\mathcal{W}}
\newcommand{\cF}{\mathcal{F}}
\newcommand{\cD}{\mathcal{D}}
\newcommand{\cC}{\mathcal{C}}
\newcommand{\cL}{\mathcal{L}}
\newcommand{\cY}{\mathcal{Y}}
\newcommand{\cX}{\mathcal{X}}

\newcommand{\btau}{\boldsymbol{\tau}}

\newcommand{\jw}{\mathpzc{J}}
\newcommand{\bt}{\mathbf{t}}
\newcommand{\bc}{\mathbf{c}}
\newcommand{\bs}{\mathbf{s}}
\newcommand{\bp}{\mathbf{p}}
\newcommand{\bq}{\mathbf{q}}
\newcommand{\bv}[1]{\mathbf{#1}}

\newcommand{\on}{\operatorname}
\newcommand{\br}[1]{\left\langle#1\right\rangle}  % angle brackets
\newcommand{\set}[1]{\left\{#1\right\}}  % a set

\makeatletter
   \providecommand\@dotsep{5}
\makeatother

\begin{document}
\title{A Landau--Ginzburg/Calabi--Yau correspondence for the mirror quintic}
\author{Nathan Priddis}
\author{Mark Shoemaker}
\begin{abstract}
We prove a version of the Landau--Ginzburg/Calabi--Yau correspondence for the mirror quintic. In particular we calculate the genus--zero FJRW theory for the pair $(W, G)$ where $W$ is the Fermat quintic polynomial and $G = SL_W$.  We identify it with the Gromov--Witten theory of the mirror quintic three--fold via an explicit analytic continuation and symplectic transformation.
In the process we prove a mirror theorem for the corresponding Landau--Ginzburg model $(W,G)$.
\end{abstract}

\maketitle

The Landau--Ginzburg/Calabi--Yau (LG/CY) correspondence was conjectured by physicists over twenty years ago based on mirror symmetry (\cite{VW, W}).  It describes a deep relationship between the geometry of Calabi--Yau complete intersections and the local structure of corresponding singularities.  Mathematically it was not until 2007, with the development of Fan--Jarvis--Ruan--Witten (FJRW) theory (\cite{FJR1}), that the conjecture could be made precise.  The LG/CY correspondence is now understood as relating the Gromov--Witten (GW) theory of a Calabi--Yau to the FJRW theory of the corresponding singularity (see Conjecture~\ref{conj}).  
Though interesting in its own right, there is also evidence to suggest that FJRW theory is easier to calculate than Gromov--Witten theory.  For example in \cite{Gu}, Gu\'er\'e calculates the genus-zero FJRW theory in a range of cases where the corresponding GW theory is currently unknown.
Thus the LG/CY correspondence provides a possible method of attack for determining the Gromov--Witten theory of many Calabi--Yau's.  

In genus zero the LG/CY correspondence has been proven in the case of a hypersurfaces in a Gorenstein weighted projective space in \cite{ChR, ChIR, KS} and extended to certain complete intersections in \cite{Cl}.  In this paper we prove a version of the genus zero correspondence for the mirror quintic, a Calabi--Yau hypersurface in an orbifold quotient of projective space.  This is the first case where the correspondence has been shown for a space which cannot be constructed as a complete intersection in weighted projective space.

\subsection{The quintic}  
We start by reviewing the LG/CY correspondence and its relation to mirror symmetry in the simpler case of the Fermat quintic.
We use the term \emph{Landau--Ginzburg model} to refer to a pair $(W, G)$ where $W$ is nondegenerate quasihomogeneous polynomial on $\CC^N$ and $G$ is a finite subgroup of $\text{Aut}(W)$.  We think of this data as defining a singularity $\set{W=0} \subset [\CC^N/G]$. In the case of the Fermat quintic, $W = x_1^5 + \cdots + x_5^5$, and we set $G=\langle \jw \rangle = \langle (\zeta_5, \ldots, \zeta_5)\rangle$ where $\zeta_5 = \exp(2 \pi i/5)$. Note that vanishing locus of $W$ also defines a Calabi--Yau variety in projective space, $M = \set{W = 0}$ in $\PP^4$.
  
On the Calabi--Yau side, one may define Gromov--Witten (GW) invariants 
\[
\langle \psi^{a_1} \alpha_{1}, \ldots,  \psi^{a_n} \alpha_{n} \rangle^M_{g, n, d}
\] 
as certain intersection numbers on the moduli space of stable maps from genus--$g$ curves into $M$.  On the Landau--Ginzberg side, Fan, Jarvis and Ruan (\cite{FJR1}) have defined  an analogous set of invariants, the FJRW invariants,
\[
\br{\psi^{a_1}\phi_{h_1},\dots, \psi^{a_n}\phi_{h_n}}^{(W,\langle \jw \rangle)}_{g,n}
\]
as intersection numbers on the moduli space of so--called $W$--curves (see Section~\ref{FJRW}).
The LG/CY correspondence for the quintic predicts a relationship between the Landau--Ginzburg model $(W, \langle \jw \rangle)$ and the hypersurface $M \subset \PP^4$ given by relating generating functions of the FJRW and GW invariants, respectively. 

In genus zero, the GW theory of $M$ is completely determined by Givental's $J$--function. More precisely, consider a basis for the \emph{state space} $H^*(M)$ of the GW theory of $M$. We can express a point of the state space in coordinates as $\bs = \sum_{i \in I} s^i\beta_i$.  The $J$--function is defined as a cohomology--valued  function,
\[
J^{M}(\bs, z) := z + \bs + \sum_{n \geq 0} \sum_{\substack{ a , d \geq 0 \\ h \in I}} \frac{Q^d}{n!z^{a+1}} \langle \bs, \ldots, \bs,  \psi^a\beta_h \rangle^{M}_{0, n+1,d} \beta^h.
\] 
In the case of the quintic hypersurface, the genus zero GW theory is completely determined by the restriction of the $J$--function to degree two. 

Mirror symmetry plays an important conceptual role in the correspondence. To state the mirror theorem for the quintic, we must consider the \emph{mirror family} to $M$. Let $W_\psi = x_1^5 + \cdots + x_5^5 - \psi x_1\cdots x_5$.  Define the group $\bar{G} \cong (\ZZ/5\ZZ)^3$ as the subgroup of the big torus of $\PP^4$ acting via generators $e_1, e_2, e_3$:
 \begin{align} \nonumber
  e_1[x_1, x_2, x_3, x_4, x_5] &= 
  [\zeta x_1, x_2, x_3, x_4, \zeta^{-1} x_5] \\ 
  e_2[x_1, x_2, x_3, x_4, x_5] &=  
  [ x_1, \zeta x_2, x_3, x_4, \zeta^{-1} x_5] \label{e:ees} \\ 
  e_3[x_1, x_2, x_3, x_4, x_5] &= 
  [ x_1, x_2, \zeta x_3, x_4, \zeta^{-1} x_5] . \nonumber
 \end{align}
Define the family $\cW_\psi$ of \emph{mirror quintics} as the family (of orbifolds) 
\[
\cW_\psi := \set{W = 0} \subset [\PP^4/\bar{G}].
\]  
The mirror theorem for the Fermat quintic, as formulated by Givental (\cite{G1}) states that after after a change of variables, the components of $J^{M}(\bs, z)$ restricted to $ s$ in $H^2(\cW)$ give a basis of solutions to the Picard--Fuchs equations for $\cW_\psi$ around the point $\psi = \infty$.

In the same way that the GW theory of $M$ corresponds to the family $\cW_\psi$ around $\psi = \infty$, it has recently been proven that the FJRW theory of $(W, \langle \jw \rangle)$ corresponds to the same family in a neighborhood of $\psi=0$. In this case the state space of the theory is defined in terms of Lefshetz thimbles of the singularity, and one may also define an FJRW $J$--function, $J^{(W, \langle \jw \rangle)}(\bt, z)$, in exact analogy to GW theory.  In \cite{ChR} it is proven that after a change of variables, the restriction of $J^{(W, \langle \jw \rangle)}(\bt, z)$ to  $t$ of degree two gives a basis of solutions to the Picard--Fuchs equations for $\cW_\psi$ around the point $\psi = 0$. As in the GW theory of M, the genus zero FJRW theory is completely determined by the restriction of the $J$--function to degree two.

The GW theory and FJRW theory correspond to Picard--Fuchs equations in neighborhoods of  $\psi = \infty$ and $\psi = 0$ respectively.  Thus we obtain as a corollary to the above mirror theorems that the genus zero GW theory of $M$ may be identified with the genus zero FJRW theory of $(W,\langle \jw \rangle)$ via analytic continuation in $\psi$ and a linear transformation $\UU$.  This is the LG/CY correspondence in genus zero.

Finally, Givental's symplectic formalism gives a possibility of determining the  higher genus LG/CY correspondence from the genus zero correspondence. Namely, it has been conjectured \cite[Conjecture 3.2.1]{ChR} that the quantization of $\UU$ should relate the higher genus invariants of the two respective theories.

\subsection{The mirror quintic}
Given the deep connection between the quintic $M$ and the mirror quintic $\cW := \cW_0$, it is natural to ask if we can prove a similar LG/CY correspondence for $\cW$. In this paper we prove that such a correspondence exists in this case as well.  Consider the one--parameter family of deformations of $M$ given by $M_\psi := \set{W_\psi = 0} \subset \PP^4$.  This family is mirror to $\cW$.

In analogy to the original mirror theorem, it was proven in \cite{LSh} that the orbifold GW theory of $\cW$ may be related to the Picard--Fuchs equations of $M_\psi$ around $\psi = \infty$. To be more precise, it was shown that after restricting to the degree two part of the \emph{untwisted} subspace of the state space, $H^2_{un}(\cW) \subset H_{CR}^*(\cW)$, the components of the mirror quintic $J$--function, $J^{\cW}(s,z)$, give solutions to the Picard--Fuchs equations of a holomorphic $(3,0)$--form on $M_\psi$ around $\psi = \infty$.  It was shown furthermore that the first derivatives 
\[
J_g^{\cW}(s, z) := z\frac{\partial J^{\cW}(\bs, z)}{\partial s^g} \Big|_{s \in H^2_{un}(\cW)}
\] 
give solutions to the Picard--Fuchs equations for the other (non--holomorphic) families of 3--forms on $M_\psi$ around $\psi = \infty$.  

As we will show, FJRW theory gives an analogous statement near the point $\psi= 0$. Consider the group
\begin{equation}\label{e:slw}
G := \langle \jw, e_1, e_2, e_3 \rangle \cong (\ZZ/5\ZZ)^4,
\end{equation}
where $e_1, e_2,$ and $e_3$ are as in \eqref{e:ees}. The genus zero FJRW theory of $(W, G)$ can be identified with solutions to the Picard--Fuchs equations of $M_\psi$ around $\psi = 0$ as follows:

\begin{theorem}[see Remark~\ref{r:mirror}] \label{t:intromirror}
After restricting to an appropriate one--parameter subset $\bt = t$, the FJRW $J$--function $J^{(W,G)}(t, z)$ satisfies, up to a change of variables, the Picard--Fuchs equations of a holomorphic $(3, 0)$--form on $M_\psi$ around $\psi = 0$.  Furthermore, the first derivatives $J_h^{(W,G)}(t, z)$ give solutions to the Picard--Fuchs equations of the other (non--holomorphic) families of 3--forms on $M_\psi$ around $\psi = 0$. 
\end{theorem}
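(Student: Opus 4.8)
The plan is to prove the statement by explicitly computing the FJRW $J$--function of $(W,G)$ and matching its components, after the mirror change of variables, against a basis of period solutions to the Picard--Fuchs system of the family $M_\psi$ near $\psi = 0$. This is precisely the mirror theorem for the Landau--Ginzburg model $(W,G)$ advertised in the abstract, and it parallels the strategy used in \cite{ChR} for the pair $(W, \langle \jw\rangle)$ and in \cite{LSh} on the Gromov--Witten side; the essential new difficulty is that the larger group $G \cong (\ZZ/5\ZZ)^4$ forces us to track a considerably richer state space.

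First I would set up the FJRW state space of $(W,G)$, decomposed into its $G$--twisted sectors, and isolate the one--parameter subset $\bt = t$ appearing in the theorem: this should be the degree--two narrow sector generated by a distinguished element of $G$, playing the role of $H^2$ on the Calabi--Yau side. Restricting the $J$--function to this line collapses the genus--zero data to a single cohomology--valued function, in exact analogy with the small quantum cohomology of a hypersurface. Second, I would reconstruct this restricted $J$--function from a finite set of initial correlators: flatness of the Dubrovin connection (equivalently the WDVV, string, and dilaton equations) reduces the whole function to a small number of three-- and four--point invariants. These I expect to compute using the concavity axiom of FJRW theory, which identifies the relevant correlators with integrals of Chern classes of the pushforward $R\pi_*\cL$ of line bundles $\cL$ on the universal $W$--curve; orbifold Grothendieck--Riemann--Roch then expresses them as explicit hypergeometric coefficients, from which one reads off an order--four ODE annihilating the components of the restricted $J$--function.

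The final step is to identify this ODE, after the mirror map, with the Picard--Fuchs operator governing the periods of the holomorphic $(3,0)$--form on $M_\psi$ near $\psi = 0$, computed via Griffiths--Dwork on the one--parameter family. For the second assertion I would differentiate $J^{(W,G)}$ in the degree--two coordinates: the derivatives $J_h^{(W,G)}$ populate the remaining twisted sectors and, through the same Dubrovin--connection structure, should match the periods of the non--holomorphic $(2,1)$, $(1,2)$, and $(0,3)$ families of three--forms, recovering the full rank--four period system.

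I expect the main obstacle to be the correlator computation for the enlarged group $G$: keeping careful track of the many twisted sectors, verifying concavity sector--by--sector so that orbifold Riemann--Roch applies, and pinning down the initial invariants precisely enough to close the loop with the \emph{entire} Picard--Fuchs system---including the non--holomorphic period families---rather than merely reproducing the single holomorphic period. A secondary subtlety is determining the correct mirror change of variables and the normalization of the distinguished narrow generator, since an incorrect identification there would misalign the Hodge--graded pieces even if each individual ODE is matched.
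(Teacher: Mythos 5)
Your overall architecture---compute the restricted $J$--function of $(W,G)$ explicitly and match it, after the mirror map, against the Picard--Fuchs system of $M_\psi$ at $\psi=0$ taken from \cite{LSh}---is the right shape, and your final identification step is exactly what the paper does in Remark~\ref{r:mirror}. But your central computational step has a genuine gap. You propose to recover $J^{(W,G)}(t\phi_{\jw^2},z)$ and its derivatives from finitely many three-- and four--point invariants via the string, dilaton, and WDVV/TRR equations (``flatness of the Dubrovin connection''). This fails in FJRW theory because there is \emph{no divisor equation}. The restricted $J$--function is assembled from the correlators $\br{\phi_{\jw^2},\dots,\phi_{\jw^2},\psi^{a}\phi_h}^{(W,G)}_{0,n+1}$ for \emph{all} $n$: since $\iota(\jw^2)=1$, the virtual cycle with $n$ insertions of $\phi_{\jw^2}$ has complex degree $1-\iota(h)$ independent of $n$, so a potentially nonzero invariant exists at every $n$, and each one is an independent piece of data. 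In Gromov--Witten theory the divisor equation strips off degree--two insertions and collapses this infinite family; in FJRW theory no such axiom exists, and with all insertions primitive (as $\phi_{\jw^2}$ and the $\phi_h$ are), neither WDVV nor TRR expresses the $(n+1)$--point invariant in terms of lower--point ones. So the ``small number of initial correlators'' you plan to feed into Grothendieck--Riemann--Roch does not determine the series, and the order--four ODE cannot be read off this way.

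This obstruction is precisely what forces the paper (following \cite{ChR}, \cite{CCIT}, \cite{ChZ}) into the twisted--theory route, where your two ingredients reappear but are assembled globally rather than correlator by correlator. The \emph{untwisted} theory is computed in closed form for all $n$ by a direct multiplicity/$\psi$--class argument, giving equation~\eqref{e:jun}; Grothendieck--Riemann--Roch is used once, to prove the quantization identity $\hat\Delta\cD^{un}=\cD^{tw}$ of Proposition~\ref{p:sympl}, whence $\sL^{tw}=\Delta(\sL^{un})$; specializing the parameters $s_d$ as in \eqref{e:sd} and taking the non--equivariant limit (Corollary~\ref{c:nonequivlimit}, which rests on the concavity statement you correctly anticipated) produces the explicit hypergeometric $I$--function \eqref{e:modif} lying on $\sL^{(W,G)}$. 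The mirror theorem (Theorem~\ref{t:mirror} and Corollary~\ref{c:mirror}) then follows not from reconstruction but from the uniqueness characterization of the $J$--function as the point of the Lagrangian cone of the form $z\phi_\jw+\bt+\cO(z^{-1})$, combined with the expansion of Lemma~\ref{l:I}; finally the components $I^{(W,G)}_h(t,z)$ are matched against the Picard--Fuchs operators of \cite{LSh}, Table~1. If you want to salvage a reconstruction--style proof, you would need a substitute for the divisor equation valid for $\phi_{\jw^2}$--insertions, which is not available.
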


Solutions to the Picard--Fuchs equations for the family $M_\psi$ around $\psi = 0$ and $\psi = \infty$ are related by analytic continuation.  So as a corollary to the above theorem, we obtain an explicit relationship between the respective $J$--functions $J^\cW(s, z)$ and $J^{(W, G)}(t, z)$ and their derivatives.  
\begin{theorem}[= Theorem~\ref{t:U}]
There exists a linear symplectic transformation $\UU$ which (after a mirror transformation) identifies derivatives of the analytic continuation of $J^{\cW}(\bs, z)$ with those of $J^{(W, G)}(\bt, z)$.  Such a $\UU$ is unique up to a factor and choice of analytic continuation.
\end{theorem}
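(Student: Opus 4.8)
The plan is to build $\UU$ by transporting the known Picard--Fuchs data through analytic continuation. By Theorem~\ref{t:intromirror} together with the cited result of \cite{LSh}, both the derivatives of $J^{(W,G)}(\bt,z)$ (near $\psi=0$) and the derivatives of $J^{\cW}(\bs,z)$ (near $\psi=\infty$) furnish bases of solutions to the same collection of Picard--Fuchs equations attached to the family of $3$--forms on $M_\psi$, just expanded around different points of the $\psi$--line. First I would fix a path from $0$ to $\infty$ avoiding the singular fibers of $M_\psi$ (the discriminant locus), and analytically continue the $J^{\cW}$--solutions along it. Since the space of solutions to a given Picard--Fuchs operator is finite dimensional and the two bases span the same solution space after continuation, there is a constant invertible matrix $\UU$ expressing one basis in terms of the other. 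This produces the desired linear identification; the mirror transformation simply records the change of variables (the coordinate matches $t \leftrightarrow s$ coming from the two mirror theorems) needed to line up the flat coordinates on the two sides.

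The substantive content beyond mere existence of a change--of--basis matrix is that $\UU$ is \emph{symplectic} with respect to the pairings on the two state spaces. To establish this I would identify the relevant symplectic form with the intersection pairing on the middle cohomology $H^3(M_\psi)$ (equivalently, the residue/cup--product pairing on the $3$--forms), which is monodromy invariant and hence preserved under analytic continuation. Concretely, the components of $J^{\cW}$ and its derivatives realize a full basis of periods of the holomorphic and non--holomorphic $3$--forms; the Poincar\'e pairing on these periods is the topological intersection form, independent of $\psi$. Because both the GW and FJRW symplectic pairings are, under the respective mirror maps, identified with this same intersection form, any transformation carrying one period basis to the other must preserve it — so $\UU$ is automatically symplectic. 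I would spell out this compatibility by matching the Poincar\'e pairing on $H^2_{un}(\cW)$ and its dual (via $J_g^{\cW}$) with the FJRW pairing on the corresponding FJRW classes, using that both equal the intersection pairing.

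For uniqueness, I would argue that the solution space of the Picard--Fuchs system, as a local system over the punctured $\psi$--line, is irreducible (or at least that the relevant monodromy representation has no proper invariant complement compatible with the symplectic structure). Any two transformations identifying the two bases differ by an automorphism of the continued solution space commuting with the monodromy; by Schur's lemma such an automorphism is a scalar, which accounts for the stated ambiguity ``up to a factor.'' The remaining ambiguity ``choice of analytic continuation'' reflects that the continuation depends on the homotopy class of the chosen path in $\CC \setminus \{\text{discriminant}\}$, and different homotopy classes differ precisely by the monodromy action — so $\UU$ is well defined once a path is fixed, and the freedom in the path composes $\UU$ with a monodromy operator.

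The step I expect to be the main obstacle is the verification of symplecticity in a way that is not merely formal. One must pin down the precise normalization identifying the FJRW Poincar\'e pairing and the GW Poincar\'e pairing with the topological intersection form on $H^3(M_\psi)$ — including the correct factors of $z$ and the grading conventions built into the symplectic formalism, and the matching of the flat ``mirror'' coordinates on both sides. In practice this reduces to a careful bookkeeping computation comparing the explicit period integrals produced by the two $J$--functions against the intersection matrix, and confirming that the change of variables in Theorem~\ref{t:intromirror} is compatible with the one coming from \cite{LSh}; the conceptual input (monodromy invariance of the intersection pairing) is clean, but aligning all normalizations correctly is where the real work lies.
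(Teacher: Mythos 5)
Your high-level frame (both theories produce solution bases of the same Picard--Fuchs system around $\psi=0$ and $\psi=\infty$, so analytic continuation yields a change-of-basis matrix) is indeed how the paper motivates the result, but two of your three main steps have genuine gaps. First, symplecticity: you derive it from the claim that both the FJRW and GW Givental pairings are identified, via the respective mirror maps, with the monodromy-invariant intersection form on $H^3(M_\psi)$. That claim is not available from the inputs you cite: the mirror theorems (Theorem~\ref{t:mirror} here, and equation~\eqref{e:A-model} from \cite{LSh}) identify $J$-functions with solutions of Picard--Fuchs equations and say nothing about pairings. Establishing the pairing identification is essentially equivalent to the symplecticity you want, so your argument is circular at its crucial point; calling it ``bookkeeping'' inverts the logical order. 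The paper avoids this entirely: it analytically continues the explicit hypergeometric $I$-functions $I^{\cW}_g$ by the Mellin--Barnes contour method, reads off $\UU$ block by block from the Gamma-factor matching against the $I^{(W,G)}_h$ computed in Section~\ref{twisted}, and then verifies symplecticity (together with nonsingularity, degree preservation, and the $\CC[z,z^{-1}]$-valuedness, which your constant-matrix argument never addresses) by direct computation on the $4\times 4$ and $2\times 2$ blocks.

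Second, uniqueness: your Schur's-lemma argument requires the monodromy representation of the Picard--Fuchs system to be irreducible, and here it is visibly not. Since $\bar{G}$ acts fiberwise on the family $M_\psi$, the local system on $H^3(M_\psi)$ splits into the $4$-dimensional invariant piece (the classical quintic operator) and many $2$-dimensional pieces; this reducibility is exactly why the paper's $\UU$ is block diagonal. Schur then gives at most a scalar on each constituent, and because the intersection form pairs a block with its dual block, reciprocal rescalings of paired blocks preserve the symplectic structure---so even your hedged assumption (``no invariant complement compatible with the symplectic structure'') fails, and the argument does not reduce the ambiguity to a single overall factor. A final caution: you invoke Theorem~\ref{t:intromirror}, which the paper notes (Remark~\ref{r:mirror}) is most naturally deduced from Theorem~\ref{t:U} itself; to avoid circularity you must instead use the direct verification that the explicit series $I^{(W,G)}_h(t,z)$ satisfy the Picard--Fuchs equations, which again requires the explicit formulas your abstract argument was designed to bypass.
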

The consideration of not just $J^{\cW}(\bs, z)$ and $J^{(W, G)}(\bt, z)$ but also their derivatives is necessary to uniquely determine $\UU$.

Givental's symplectic formalism allows one to rephrase the above theorem in a more useful form.  In this setting, one can view the genus zero generating functions of GW theory and FJRW theory as generating \emph{Lagrangian cones} 
$\sL^\cW$ and $\sL^{(W,G)}$ in appropriate symplectic vector spaces.  These Lagrangian subspaces completely determine the respective genus zero theories.  The above theorem then identifies a certain subset of $\sL^\cW$, the \emph{small slice} (see Definition~\ref{d:small}) of $\sL^{\cW}$, with the corresponding slice of $\sL^{(W,G)}$.

\begin{theorem}[= Theorem~\ref{t:main}]
The symplectic transformation $\UU$ identifies the analytic continuation of the small slice of $\sL^{\cW}$ with the small slice of $\sL^{(W,G)}$.
\end{theorem}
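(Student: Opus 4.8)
The plan is to deduce Theorem~\ref{t:main} from Theorem~\ref{t:U} by showing that the small slice of each Lagrangian cone is completely determined by the corresponding $J$--function together with its first derivatives, and that a linear symplectic transformation which matches this data on the two sides necessarily matches the slices themselves. The essential input is the ruling structure of a Givental Lagrangian cone: at any point $f \in \sL$ the tangent space $T_f$ satisfies $z T_f \subset \sL$ and $T_f \cap \sL = z T_f$, and $T_f$ is generated over $\CC[z]$ by $f$ together with the derivatives $z \partial_{s^g} f$. Consequently, once one knows the behavior of $J$ and of the $J_g$ over the small parameter locus, the portion of $\sL$ parametrized by that locus is recovered as the cone generated by these vectors (with the standard dilaton shift, so that $J^{\cW}(s,-z)$ is the point of $\sL^{\cW}$).

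First I would make the characterization of the small slice explicit. By Definition~\ref{d:small}, the small slice of $\sL^{\cW}$ is the portion of $\sL^{\cW}$ parametrized by $s \in H^2_{un}(\cW)$; using the ruling structure above, this is precisely the $\CC[z]$--span of $J^{\cW}(s, -z)$ and the tangent vectors $J_g^{\cW}(s, -z) = z\,\partial_{s^g} J^{\cW}(s, -z)$ as $s$ ranges over the small locus. The analogous statement holds for $\sL^{(W,G)}$ with $J^{(W,G)}(t,-z)$ and the $J_h^{(W,G)}(t,-z)$. I would then verify the dimension count: the components of these vectors, after the mirror change of variables relating $s$ (resp.\ $t$) to $\psi$, form a basis of solutions of the rank--four Picard--Fuchs system of $M_\psi$ near $\psi = \infty$ (resp.\ $\psi = 0$), by the mirror theorem for $\cW$ of \cite{LSh} and by Theorem~\ref{t:intromirror}. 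This guarantees that $J$ and its first derivatives already exhaust the slice, with no further ruling directions unaccounted for.

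With the slices so described, the identification is a formal consequence of Theorem~\ref{t:U}. Solutions of the Picard--Fuchs system near $\psi = \infty$ extend by analytic continuation to solutions near $\psi = 0$, and the two solution bases differ by a constant change of basis; Theorem~\ref{t:U} packages exactly this scalar statement into the single linear symplectic map $\UU$, in the sense that $\UU$ carries the analytic continuation of $J^{\cW}$ and of each $J_g^{\cW}$ to $J^{(W,G)}$ and the $J_h^{(W,G)}$ after the mirror transformation. Since $\UU$ is linear it sends the $\CC[z]$--span of the first family to the $\CC[z]$--span of the second, and since $\UU$ is symplectic it preserves the ruling relation $z T_f \subset \sL$ and hence maps a Lagrangian cone slice to a Lagrangian cone slice. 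Combining these with the characterization of the previous paragraph yields the identification of the analytic continuation of the small slice of $\sL^{\cW}$ with the small slice of $\sL^{(W,G)}$.

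The main obstacle I anticipate is the second step: proving that the first derivatives of the $J$--function furnish a \emph{complete} generating set for the small slice, i.e.\ genuinely matching the ruling of the cone with the rank of the Picard--Fuchs system. This requires a careful comparison of the degree--two untwisted sector of the state space with the space of $3$--forms on $M_\psi$, so that the count of independent tangent directions $J_g$ agrees with the order of the differential system and no solutions are missed. A secondary technical point is compatibility: one must check that the mirror change of variables and the passage to the $z$--adic completion $\CC((z^{-1}))$ respect the symplectic pairing, so that $\UU$ remains symplectic throughout, and one should track the residual ambiguity (an overall scalar and the choice of continuation path) noted in Theorem~\ref{t:U} to confirm that it does not affect the slice identification.
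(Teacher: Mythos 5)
Your overall strategy --- reduce each small slice to data matched by Theorem~\ref{t:U} --- is the paper's strategy, but there is a genuine gap at exactly the step you flag as the ``main obstacle,'' and the resolution you sketch for it would not work. The difficulty is this: by \eqref{e:Jgens}, the slice $zL_{J^{\cW}(\bs,-z)}$ at $\bs = sH$ is the $\CC[z]$--span of the derivatives $z\frac{\partial}{\partial s^i}J^{\cW}(\bs,-z)\big|_{\bs=sH}$ where $i$ runs over a basis of the \emph{full} $204$--dimensional space $H^{even}_{CR}(\cW)$, not just over the directions dual to the fundamental classes $\ii_g$. Theorem~\ref{t:U}, together with the mirror theorem \eqref{e:A-model}, only controls the derivatives $J^{\cW}_g$ in the directions dual to $\ii_g$ with $\deg(\ii_g)\leq 2$; it says nothing directly about the derivatives in the directions dual to $H$, $H^2$, $H^3$, $\ii_g H$, or the age--two fundamental classes. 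So your characterization of the small slice faces a dichotomy: if your generating set consists only of the $J^{\cW}_g$'s (as your notation suggests), the characterization is false as stated; if it consists of all $204$ derivatives, then the ``formal consequence of Theorem~\ref{t:U}'' step collapses, because $\UU$ has not been shown to match most of the generators.

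The paper closes this gap with a specific mechanism absent from your proposal: it chooses the basis $\{(\nabla^{\cW}_s)^k 1\}_{0 \leq k \leq 3} \cup \{\ii_g, \nabla^{\cW}_s \ii_g\}_{\deg(\ii_g) = 2}$ of $H^{even}_{CR}(\cW)$, proves it is indeed a basis (the non--vanishing of the quantum products $H *_s \ii_g$ is extracted from the coefficients of $\frac{d}{ds}J^{\cW}_g$, which are nonzero by the mirror theorem of \cite{LSh}), and then uses the quantum differential equation to convert the derivative of $J^{\cW}$ in the direction dual to $\nabla^{\cW}_s\ii_g$ (resp.\ $(\nabla^{\cW}_s)^k 1$) into $\frac{d}{ds}J^{\cW}_g(s,-z)$ (resp.\ $\left(\frac{d}{ds}\right)^k J^{\cW}_e(s,-z)$). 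Hence the entire ruling at $sH$ is determined by the finitely many functions $J^{\cW}_g(s,-z)$ with $\deg(\ii_g)\leq 2$, viewed as functions of $s$ --- and these, after the mirror transformation, are precisely what $\UU$ matches. Your proposed substitute, a dimension count against the ``rank--four Picard--Fuchs system,'' cannot do this job: the rank of the ODE in $\psi$ concerns the components of a single $J^{\cW}_g$ as solutions of a scalar equation, and has nothing to do with the $204$ tangent directions of the ruling. What is needed is the quantum--product/QDE computation relating state--space derivatives to $s$--derivatives, not a count of solutions of a hypergeometric equation; without it, Theorem~\ref{t:main} does not follow from Theorem~\ref{t:U}.
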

As in the case of the quintic, it is conjectured that the quantization of $\UU$ identifies the (analytic continuation of the) higher genus GW theory of $\cW$ with the FJRW theory of $(W,G)$.

\subsection{Acknowledgements}
The authors wish to express their gratitude to Prof. Y. Ruan for his help and guidance over the years.  They would also like to thank E. Clader, D. Ross, and Y. Shen for helpful discussions on Givental's theory and A Chiodo for giving useful comments on our paper. .  

M.S. is grateful to Prof. Y. P. Lee for the collaboration which made this work possible, as well as many helpful discussions about the current project.  

This material is based upon work supported by the National Science Foundation under NSF RTG grant 1045119 and NSF FRG grant 1159265.

\section{LG--Theory}\label{FJRW}

For the Mirror Quintic, the LG model is described by FJRW--theory. Here we will give a brief review of the definitions and facts we will need to describe the LG/CY correspondence (see \cite{FJR1} or \cite{ChR}) . The mirror theorem for the LG model will be given in Section~\ref{twisted}. 

%%%% STATE SPACE %%%%
\subsection{State Space}

A polynomial $W\in \CC[x_1,\dots,x_N]$ is \emph{quasihomogeneous} of degree $d$ with integer weights $w_1, \dots, w_N$ if for every $\lambda \in \CC$,
\[
W(\lambda^{w_1}x_1,\dots, \lambda^{w_N}x_N) = \lambda^dW(x_1, \dots, x_N).
\]
By rescaling the numbers $w_1, \dots, w_N$ and $d$, we can require that $\gcd(w_1, \dots, w_N)=1$. For each $1\leq i\leq N$, let $q_k=\tfrac{w_k}{d}$. The central charge of $W$ is defined to be
\begin{equation}
\hat c:=\sum_{k=1}^N(1-2q_k).
\end{equation}

A polynomial is \emph{nondegenerate} if 
\begin{itemize}
\item[(i)] the weights $q_k$ are uniquely determined by $W$, and
\item[(ii)] the hypersurface defined by $W$ is non--singular in projective space. 
\end{itemize}

%%---Group: def,  order, generators, %%

The maximal group of diagonal symmetries is defined as
\[
G_{max}:=\set{(\alpha_1,\dots,\alpha_N)\subseteq (\CC^*)^N \,|\,W(\alpha_1x_1,\dots,\alpha_Nx_N)=W(x_1,\dots,x_N)}
\]
Note that $G_{max}$ always contains the \emph{exponential grading element} $\jw=(e^{2\pi iq_1},\dots,e^{2\pi iq_N})$. If $W$ is nondegenerate, $G_{max}$ will be finite.  Define the \emph{exponent of $W$}, denoted $\bar{d}$, as the order of the largest cyclic subgroup of $G_{max}$. In this paper, we will assume for simplicity that $\bar{d}$ is equal to the degree $d$ of $W$.  This does not hold in general, but will be true in the case of interest to us.

A group $G\subset G_{max}$ is \emph{admissible} if there is a Laurent polynomial $Z$, quasihomogeneous with the same weights as $W$, having no monomials in common with W, such that the maximal group of diagonal symmetries of $W+Z$ is equal to $G$. Every admissible group $G$ has the property that $\jw \in G$. Let $\SL_W=\set{(\alpha_1,\dots, \alpha_N)\in G_{max}| \prod \alpha_i=1}$. 
If $W$ satisfies the \emph{Calabi--Yau} condition $\sum_{k=1}^N w_k = d$, then $Z=x_1x_2\dots x_N$ will be quasihomogeneous, thus $\SL_W$ will be admissible. 

Let $G$ be an admissible group. For $h\in G$, let $\CC^N_h$ denote the fixed locus of $\CC^N$ with respect to $h$.
Let
$N_h$ be the complex dimension of the fixed locus of $h$. Define
\[
\sH_{h}:=H_{N_h}(\CC_h^{N}, W_h^{+\infty}; \CC)^{G},
\]
that is, $G$--invariant elements of the the middle dimensional relative cohomology of $\CC_h^{N}$. Here $W_h^{+\infty}:=(\Re W_h)^{-1}(\rho, \infty)$, for $\rho>> 0$. The state space is the direct sum of the ``sectors'' $\sH_h$, i.e. 
\[
\sH_{W,G} := \bigoplus_{h\in G}\sH_h.
\]

$\sH_{W,G}$ is $\mathbb{Q}$--graded by the $W$--degree. To define this grading, first note that each element $h\in G$ can be uniquely expressed as
\[
h=(e^{2\pi i\varTheta_1(h)},\dots,e^{2\pi i\varTheta_N(h)})
\]
with $0\leq \varTheta_k(h) < 1$. 
The \emph{degree--shifting number} is 
\[
\iota(h):=\sum_{k=1}^N(\varTheta_k(h)-q_k).
\]
For $\alpha_h \in \mathscr{H}_{h}$, the (real) $W$--degree of $\alpha_h$ is defined by
\begin{equation}\label{e:degw}
\deg_W(\alpha_h):=N_h+2\iota(h).
\end{equation}

\begin{remark}\label{r:identity element}
Although we will not need it in this paper, one can define a product structure on $\sH_{W,G}$, which then becomes a graded algebra.
Let $\phi_\jw$ be the fundamental class in $\sH_\jw$, and note that $\deg_W(\phi_\jw) = 0$.  In fact $\phi_\jw$ is the identity element in $\sH_{W,G}$.  This partially explains the prominence of the element $\jw$ in the above discussion.
\end{remark}

There is also a non--degenerate pairing
\[
\br{-,-}:\sH_h\times\sH_{h^{-1}}\to \CC,
\]
which induces a symmetric non--degenerate pairing, 
\[
\br{- ,- }:\sH_{W,G}\times \sH_{W,G}\to \CC.
\]

%%%%%% MODULI SPACE OF W--CURVES %%%%%%%%%

\subsection{Moduli of W--curves}

Recall that an \emph{$n$--pointed orbifold curve} is a stack of Deligne--Mumford type with at worst nodal singularities with orbifold structure only at the marked points and the nodes. We require the nodes to be \emph{balanced}, in the sense that the action of the stabilizer group be given by 
\[
(x,y)\mapsto (e^{2 \pi i/ k} x,e^{-2 \pi i/k}  y).
\]

Given such a curve, $\cC$, let $\omega$ be its dualizing sheaf. The \emph{log--canonical bundle} is 
\[
\omega_{\log}:=\omega(p_1+ \dots + p_n)
\]

Following \cite{ChR}, we will consider \emph{$d$--stable} curves. A $d$--stable curve is a proper connected orbifold curve $\cC$ of genus $g$ with $n$ distinct smooth markings $p_1,\dots,p_n$ such that
\begin{enumerate}
\item[(i)] the  $n$--pointed underlying coarse curve is stable, and
\item[(ii)] all the stabilizers at nodes and markings have order $d$. 
\end{enumerate}

There is a moduli stack, $\cM_{g,n,d}$ parametrizing such curves. It is proper, smooth and has dimension $3g-3+n$. (As noted in \cite{ChR}, it differs from the moduli space of curves only because of the stabilizers over the normal crossings.) 

Write $W$ as a sum of monomials $W=W_1+\dots+W_s$, with $\displaystyle W_l=c_l\prod_{k=1}^N x_k^{a_{lk}}$. Given line bundles $\cL_1, \ldots , \cL_N$ on the $d$--stable curve $\cC$, we define the line bundle \[W_l(\cL_1,\dots,\cL_N) := \bigotimes_{k=1}^N\cL_k^{\otimes a_{lk}}.\] 

\begin{definition}A \emph{$W$--structure} is the data $(\cC, p_1,\dots, p_n, \cL_1,\dots,\cL_N,\varphi_1,\dots \varphi_N)$, where $\cC$ is an $n$--pointed $d$--stable curve, the $\cL_k$ are line bundles on $\cC$ satisfying 
\begin{equation}\label{e:poly}W_l(\cL_1,\dots,\cL_N)\cong \omega_{\log},
\end{equation} and for each $k$, $\varphi_k:\cL_k^{\otimes d}\to \omega_{\log}^{w_k}$ is an isomorphism of line bundles. 
\end{definition}
There exists a moduli stack of $W$--structures, denoted by $W_{g,n}$.

\begin{prop}[\cite{ChR}]
The stack $W_{g,n}$ is nonempty if and only if $n>0$ or $2g-2$ is a positive multiple of $d$. It is a proper, smooth Deligne--Mumford stack of dimension $3g-3+n$. It is etale over $\cM_{g,n,d}$ of degree $|G_{max}|^{2g-1+n}/d^N$. 
\end{prop}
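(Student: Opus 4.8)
The plan is to study the forgetful morphism $\pi\colon W_{g,n}\to\cM_{g,n,d}$ that remembers only the underlying $d$--stable curve, and to deduce every assertion from the fact that $\pi$ is finite and \'etale as a morphism of Deligne--Mumford stacks. Since $\cM_{g,n,d}$ is already known to be proper, smooth, and of dimension $3g-3+n$, smoothness and the dimension count for $W_{g,n}$ transfer immediately across an \'etale morphism, and properness follows because a finite morphism to a proper base is proper. Thus the entire content reduces to two things: verifying that $\pi$ is \'etale of the stated degree, and determining exactly when the fiber is nonempty.

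For the \'etale claim I would fix a $d$--stable curve $\cC$ and analyze the fiber $\pi^{-1}(\cC)$, which parametrizes line bundles $\cL_1,\dots,\cL_N$ on $\cC$ together with the isomorphisms $\varphi_k$. Each $\varphi_k$ exhibits $\cL_k$ as a $d$--th root of $\omega_{\log}^{w_k}$, while the relations $W_l(\cL_1,\dots,\cL_N)\cong\omega_{\log}$ cut out a compatible system of such roots. The key input is that $d$--th roots of a fixed line bundle on an orbifold curve form a finite set carrying no infinitesimal deformations or automorphisms beyond those of the curve itself: the relevant deformation and obstruction spaces are those of the base, so $\pi$ is unramified and flat, hence \'etale. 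This is precisely the root--stack machinery of Chiodo (\cite{ChR}), and the multiplicative relations among the $\cL_k$ only restrict the torsor structure without affecting rigidity.

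The degree computation is the technical heart. Over a fixed smooth curve $\cC$ the set of solutions, when nonempty, is a torsor under the finite group $K$ of $d$--torsion solutions to the homogeneous system $\bigotimes_k\cL_k^{\otimes a_{lk}}\cong\cO_\cC$. I would compute $|K|$ by splitting it into a genus contribution and a contribution from the $n$ orbifold markings: the genus part is governed by the $d$--torsion $\operatorname{Jac}(\cC)[d]\cong(\ZZ/d)^{2g}$ assembled through the exponent matrix $(a_{lk})$, which is exactly the data cutting out $G_{max}$, so it contributes $|G_{max}|^{2g}$; the local monodromy data at the markings, subject to the single global compatibility forced by $\omega_{\log}$ together with the $\varphi_k$, supplies the remaining factor and accounts for the $d^{N}$ in the denominator. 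Finally, because every object carries the generic automorphism group $G_{max}$ --- here one uses the standing assumption $\bar d=d$, so that each element of $G_{max}$ has order dividing $d$ and genuinely preserves the $\varphi_k$ --- the stacky degree is $|K|$ divided by $|G_{max}|$, yielding $|G_{max}|^{2g-1+n}/d^{N}$. Keeping careful track of the orbifold corrections at the markings is where I expect the real bookkeeping to lie.

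Nonemptiness is then a divisibility question: a solution exists if and only if each $\omega_{\log}^{w_k}$ admits a $d$--th root as an orbifold line bundle on $\cC$. When $n>0$ the fractional degrees permitted at the orbifold markings give enough freedom that roots always exist; when $n=0$ one needs $\deg\omega_{\log}^{w_k}=w_k(2g-2)$ to be divisible by $d$ for every $k$, which under nondegeneracy reduces to the requirement that $2g-2$ be a positive multiple of $d$. This yields exactly the stated dichotomy ``$n>0$ or $2g-2$ is a positive multiple of $d$,'' completing the proof.
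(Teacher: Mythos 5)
The paper itself offers no proof of this proposition --- it is quoted from \cite{ChR} --- so there is nothing to compare against and your argument must stand on its own. Your skeleton is the standard and correct one: show the forgetful map to $\cM_{g,n,d}$ is \'etale by rigidity of roots, transfer properness, smoothness and dimension from the base, and compute the degree as (size of the torsor of solutions) divided by (size of the relative automorphism group). The \'etale and nonemptiness parts are fine in outline. The gap is in the degree computation, which you rightly call the technical heart. Under the definition used here, a $W$--structure consists of the bundles $\cL_k$ \emph{together with the root isomorphisms} $\varphi_k:\cL_k^{\otimes d}\to\omega_{\log}^{w_k}$, while the monomial relations $W_l(\cL_1,\dots,\cL_N)\cong\omega_{\log}$ are mere conditions, not chosen data. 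Consequently an automorphism over the identity of $\cC$ is any tuple of scalars $(\lambda_1,\dots,\lambda_N)$ with $\lambda_k^d=1$, so the relative automorphism group is $\mu_d^N$, of order $d^N$ --- \emph{this} is where the $d^N$ in the formula comes from. Your argument only verifies the inclusion $G_{max}\subseteq\mu_d^N$ (via $\bar d=d$) and then silently asserts equality; equality holds for the Fermat quintic, where $G_{max}=\mu_5^5$, but fails for general $W$.

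The companion error is the marking count. An honest count gives: the local monodromy at each marking is an element $h_i\in G_{max}$ (this is exactly Proposition~\ref{p:1.3}), subject to the single constraint $\prod_i h_i=\jw^{2g-2+n}$ coming from integrality of the coarse degrees \eqref{e:degree}; that contributes $|G_{max}|^{n-1}$, and together with the Jacobian factor $|G_{max}|^{2g}$ one gets $|K|=|G_{max}|^{2g+n-1}$. No factor of $d^{N}$ can arise from the markings --- indeed the stated degree retains $d^{-N}$ even when $n=0$ and there are no markings at all. So if each step of your scheme is carried out as described, the output is $|K|/|G_{max}|=|G_{max}|^{2g+n-2}$, which agrees with the stated degree $|G_{max}|^{2g+n-1}/d^N$ if and only if $|G_{max}|=d^N$. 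Your two misattributions cancel precisely in that case (which includes the motivating quintic), but the proposition is stated for general nondegenerate $W$ with $\bar d=d$: for the loop polynomial $x_1^2x_2+x_2^2x_3+x_3^2x_1$ one has $d=3$, $N=3$, $|G_{max}|=9$, and your accounting is off by $d^N/|G_{max}|=3$. Two smaller points: the positivity in ``$2g-2$ a positive multiple of $d$'' comes from stability of the underlying curve (there are no $d$--stable curves with $2g-2\le 0$ and $n=0$), not from divisibility; and the reduction $d\mid w_k(2g-2)\ \forall k\iff d\mid 2g-2$ uses the normalization $\gcd(w_1,\dots,w_N)=1$, not nondegeneracy.
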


The moduli space can be decomposed into connected components, which we now describe. Because $\cL_k$ is a $d$th root of a line bundle pulled back from the coarse underlying curve, the generator of the isotropy group at $p_i$ acts on $\cL_k$ by multiplication by $e^{2 \pi i m_k^i/d}$ for some $0 \leq m_k^i < d$.  The integer $m_k^i$ is the \emph{multiplicity} of $\cL_k$ at $p_i$, and will usually be denoted $\mult_{p_i}(\cL_k)$.
Equation~\eqref{e:poly} ensures that $(e^{2\pi i m^i_1/d}, \dots,e^{2\pi im^i_N/d})\in G_{max}$. Furthermore, when we push forward the line bundle $\cL_k$ to the coarse curve, we find it has degree
\begin{equation}\label{e:degree}
q_k(2g-2+n)-\sum_{i=1}^n \mult_{p_i}(\cL_k)/d,
\end{equation}
which must therefore be an integer.

Let $\bv{h}=(h_1,\dots,h_n)$ denote an $n$--tuple of group elements, $h_i \in G_{max}$. Define $W_{g,n}(\bv{h})$ to be the stack of $n$--pointed, genus $g$ $W$--curves for which $\mult_{p_i}(\cL_k)/d = \varTheta_k(h_i)$. The following proposition describes a decomposition of $W_{g,n}$ in terms of multiplicities:
\begin{prop}[\cite{ChR,FJR1}]\label{p:1.3}
The stack $W_{g,n}$ can be expressed as the disjoint union
\[
W_{g,n}=\coprod W_{g,n}(\bv{h})
\]
with each $W_{g,n}(\bv{h})$ an open and closed substack of $W_{g,n}$. Furthermore, $W_{g,n}(\bv{h})$ is non--empty if and only if
\begin{align*}
h_i &\in G_{max}, \: i=1,\dots, n\\
q_k(2g-2+n)-\sum_{i=1}^n \varTheta_k(h_i) &\in \ZZ, \: \: \:\: \: \: \: k=1,\dots, N.
\end{align*}
\end{prop}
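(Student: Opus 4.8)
The plan is to treat the two assertions separately: the disjoint union decomposition follows from local constancy of discrete invariants, while the non--emptiness criterion is an existence statement that I would prove by explicitly constructing a $W$--structure.

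First I would establish the decomposition. At each marking $p_i$ the generator of the isotropy group acts on the fiber of $\cL_k$ by a character $e^{2\pi i m_k^i/d}$ with $0\le m_k^i<d$, and these multiplicities are discrete invariants that cannot vary in a connected family; hence the assignment sending a $W$--structure to the tuple $\bv{h}=(h_1,\dots,h_n)$ with $h_i=(e^{2\pi i m_1^i/d},\dots,e^{2\pi i m_N^i/d})$ is locally constant on $W_{g,n}$. Consequently each $W_{g,n}(\bv{h})$, the locus where $\mult_{p_i}(\cL_k)/d=\varTheta_k(h_i)$ (note $\varTheta_k(h_i)=m_k^i/d$ by the chosen range of $m_k^i$), is open and closed, and $W_{g,n}$ is their disjoint union.

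Next I would prove that the two listed conditions are necessary. Given a $W$--structure in $W_{g,n}(\bv{h})$, the membership $h_i\in G_{max}$ is exactly the observation recorded just before the statement: relation \eqref{e:poly} forces $(e^{2\pi i m_1^i/d},\dots,e^{2\pi i m_N^i/d})$ to preserve $W$. The integrality condition is immediate from \eqref{e:degree}: the pushforward of $\cL_k$ to the coarse curve is an honest line bundle, so its degree $q_k(2g-2+n)-\sum_{i} \mult_{p_i}(\cL_k)/d$ lies in $\ZZ$, and substituting $\mult_{p_i}(\cL_k)/d=\varTheta_k(h_i)$ gives the claim.

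The main obstacle is the converse (sufficiency), i.e.\ producing a $W$--structure realizing a given tuple $\bv{h}$ satisfying both conditions. I would start from any $n$--pointed $d$--stable curve $\cC$ in the stated range, and for each $k$ seek an orbifold line bundle $\cL_k$ with prescribed multiplicities $\mult_{p_i}(\cL_k)=d\,\varTheta_k(h_i)$ together with an isomorphism $\cL_k^{\otimes d}\cong \omega_{\log}^{w_k}$. The integrality condition guarantees that the would--be pushforward to the coarse curve has integer degree, so that a $d$--th root with the prescribed local multiplicities exists; the set of such roots is a torsor under the $d$--torsion of the Jacobian, and existence follows from divisibility of the Picard group of a curve. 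The delicate step is to realize all the monomial isomorphisms \eqref{e:poly}, namely $W_l(\cL_1,\dots,\cL_N)\cong \omega_{\log}$ for every monomial $W_l$, simultaneously: the condition $h_i\in G_{max}$ ensures that each $W_l(\cL_1,\dots,\cL_N)$ has vanishing multiplicity at every marking, so it differs from $\omega_{\log}$ only by a line bundle pulled back from the coarse curve, and one must check that these differences can be trivialized compatibly across all $l$ by adjusting the roots within the Jacobian. I expect this compatibility --- threading the existence of the roots through all the relations imposed by the monomials of $W$ at once --- to be the crux, and it is precisely the construction carried out in \cite{FJR1} and \cite{ChR}, which I would invoke to complete the argument.
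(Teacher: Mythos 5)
The paper itself gives no proof of this proposition: it is quoted directly from the cited references, so the ``paper's approach'' is pure citation, and your proposal is consistent with that. The two directions you do prove are correct and are precisely the observations the paper records immediately before the statement --- equation~\eqref{e:poly} forces $(e^{2\pi i m^i_1/d},\dots,e^{2\pi i m^i_N/d})\in G_{max}$, and equation~\eqref{e:degree} forces the integrality of the pushed-forward degree --- while the local constancy of multiplicities giving the open-closed decomposition is standard. The only substantive content, namely the existence of a $W$--structure realizing an arbitrary tuple $\bv{h}$ satisfying both conditions, you correctly isolate as the crux (simultaneously trivializing the discrepancies $W_l(\cL_1,\dots,\cL_N)\otimes\omega_{\log}^{-1}$, which a priori are only $d$--torsion pullbacks from the coarse curve, compatibly for all monomials $W_l$) but do not prove, deferring to \cite{FJR1} and \cite{ChR}; since the paper defers the entire proposition to those same references, this is a faithful reconstruction rather than a gap. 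One small caveat: your appeal to divisibility of the Picard group is valid on a smooth curve, and for non-emptiness it suffices to exhibit a single point, so you should say explicitly that you construct the structure on a smooth (or irreducible) $d$--stable curve rather than an arbitrary nodal one, where componentwise degree constraints make the root-existence argument more delicate.
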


Suppose $G\subset G_{max}$ is an admissible group, so $G$ is the maximal group of diagonal symmetries of $W+Z$ for some choice of quasihomogeneous Laurent polynomial $Z$. We define $W_{g,n,G}$ to be the stack of $(W+Z)$--curves with genus $g$ and $n$ marked points. This definition does not depend on the particular choice of $Z$ (see \cite{FJR1}). 

\begin{prop}[\cite{ChR, FJR1}]
$W_{g,n,G}$ is a proper substack of $W_{g,n}$. 
\end{prop}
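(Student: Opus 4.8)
The plan is to realize $W_{g,n,G}$ directly as a closed substack of $W_{g,n}$ and then deduce properness from the fact, recorded in the proposition above, that $W_{g,n}$ is itself proper. The starting observation is that, because $Z$ is quasihomogeneous with the same weights $w_1,\dots,w_N$ and degree $d$ as $W$, passing from $W$ to $W+Z$ changes neither the underlying curves nor the bundles $\cL_1,\dots,\cL_N$ nor the root isomorphisms $\varphi_k\colon\cL_k^{\otimes d}\to\omega_{\log}^{w_k}$; it only imposes, for each monomial $Z_m=\prod_k x_k^{b_{mk}}$ of $Z$, the additional constraint $Z_m(\cL_1,\dots,\cL_N)=\bigotimes_{k=1}^N\cL_k^{\otimes b_{mk}}\cong\omega_{\log}$. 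Thus a $(W+Z)$--structure is precisely a $W$--structure satisfying these extra conditions, so the inclusion of $(W+Z)$--structures into $W$--structures defines a monomorphism $W_{g,n,G}\to W_{g,n}$ identifying $W_{g,n,G}$ with the full subgroupoid cut out by the $Z_m$--constraints. Since the definition is independent of the chosen $Z$, it suffices to verify the claim for one admissible choice.

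The key step is to show that each constraint $Z_m(\cL_\bullet)\cong\omega_{\log}$ is a \emph{closed} condition in families. Set $\mathcal{N}_m:=Z_m(\cL_1,\dots,\cL_N)\otimes\omega_{\log}^{-1}$. Using the isomorphisms $\varphi_k$ together with $\sum_k b_{mk}w_k=d$ (quasihomogeneity of $Z_m$), one computes
\[
\mathcal{N}_m^{\otimes d}\cong\bigotimes_{k=1}^N\cL_k^{\otimes d\,b_{mk}}\otimes\omega_{\log}^{-d}\cong\omega_{\log}^{\sum_k w_k b_{mk}}\otimes\omega_{\log}^{-d}\cong\cO,
\]
so $\mathcal{N}_m$ is a $d$--torsion line bundle, in particular of degree zero on every component of every fiber. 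On a connected curve a line bundle of degree zero on each component is trivial if and only if it has a nonzero global section, and by upper semicontinuity of $h^0$ the locus where $\mathcal{N}_m$ admits such a section is closed in the base. Hence each $Z_m$--constraint defines a closed substack of $W_{g,n}$, and $W_{g,n,G}$, being their intersection, is a closed substack. (Automatically these conditions force the multiplicities to satisfy $Z_m(h_i)=1$, i.e. $h_i\in G$, so $W_{g,n,G}$ is contained in the union of the components $W_{g,n}(\bv{h})$ with every $h_i\in G$, consistent with Proposition~\ref{p:1.3}.)

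Finally, a closed immersion is proper and the composition of proper morphisms is proper; since $W_{g,n}$ is proper by the proposition above, the closed substack $W_{g,n,G}$ is proper as well. Equivalently, one may argue directly that $W_{g,n,G}$ is the moduli of $(W+Z)$--structures and is therefore proper by the same argument that establishes properness of $W_{g,n}$, now applied to the nondegenerate Laurent polynomial $W+Z$; the two viewpoints agree because the substack structure and the $(W+Z)$--moduli structure represent the same functor.

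The main obstacle is the closedness statement of the second step: one must check the torsion computation and the semicontinuity argument carefully on the orbifold nodal curves at hand, accounting for the stack structure at nodes and markings and for the fact that the $\cL_k$ are $d$--th roots of bundles pulled back from the coarse curve, so that the relevant cohomology and the notion of triviality are taken in the orbifold sense. Once this is in place the remainder of the argument is formal.
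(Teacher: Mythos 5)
The first thing to know is that the paper contains no proof of this proposition: it is imported wholesale from \cite{ChR, FJR1}, so your argument can only be compared against those references. Measured that way, your proof is essentially correct and takes a more elementary, self-contained route. The key reduction is valid for the definition of $W$--structure used in this paper: since the monomial conditions \eqref{e:poly} are properties of the bundles while only the root isomorphisms $\varphi_k:\cL_k^{\otimes d}\to\omega_{\log}^{w_k}$ are data, a $(W+Z)$--structure is literally a $W$--structure whose bundles satisfy the extra conditions $Z_m(\cL_1,\dots,\cL_N)\cong\omega_{\log}$. The heart of your argument is also sound: $\mathcal{N}_m=Z_m(\cL_\bullet)\otimes\omega_{\log}^{-1}$ is $d$--torsion by quasihomogeneity and the $\varphi_k$, hence of (orbifold) degree zero on every component, so fiberwise triviality is equivalent to $h^0\geq 1$ and the locus is closed by semicontinuity; and the orbifold subtlety you flag at the end is genuinely fillable rather than a gap, since a nowhere--vanishing invariant section of a bundle of orbifold degree zero on each component forces all ages to vanish, so triviality in the orbifold sense (trivial isotropy actions) does follow. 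By contrast, the cited sources establish the stronger statement that $W_{g,n,G}$ is open \emph{and} closed in $W_{g,n}$: the point of the stack structure at nodes of $d$--stable curves is precisely that $d$--torsion line bundles form a finite \'etale group over $\cM_{g,n,d}$, so the section $\mathcal{N}_m$ is locally constant and its vanishing locus is a union of connected components. Your semicontinuity argument buys only closedness, but closedness is all properness requires, and it avoids invoking finiteness of the torsion Picard group.

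Two cautions. First, your ``full subgroupoid'' description depends on this paper's convention; in Fan--Jarvis--Ruan's original formulation, where an isomorphism is chosen as part of the data for every monomial of $W+Z$, the forgetful map $W_{g,n,G}\to W_{g,n}$ is finite rather than a monomorphism, and the argument needs adjusting. Second, the closing alternative --- rerunning the properness proof for the ``nondegenerate Laurent polynomial $W+Z$'' --- should be dropped: the properness statement quoted in the paper is for nondegenerate quasihomogeneous polynomials, and its extension to Laurent polynomials is not available to cite; it is essentially what is being proved.
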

We denote the universal curve by $\pi:\sC\to W_{g,n,G}(\bv{h})$, and the universal $W$--structure by $(\LL_1,\dots, \LL_N)$. 

For each substack $W_{g,n}(\bv{h})$, one may define a virtual cycle $[W_{g,n}(\bv{h})]^{vir}$ of degree 
\[
2\left((\hat c -3)(1-g)+ n -\sum_{i=1}^n \iota(h_i)\right).
\]
The virtual cycle $[W_{g,n,G}(\bv{h})]^{vir}$ is defined as
\[
[W_{g,n,G}(\bv{h})]^{vir}:=\frac{|G_{max}|}{|G|}i^*[W_{g,n}(\bv{h})]^{vir},
\]
with $i:W_{g,n,G}(\bv{h})\hookrightarrow W_{g,n}(\bv{h})$ the inclusion map.

The stacks $W_{g,n}$ are also equipped with $\psi$--classes. We define $\psi_i$ as the first Chern class of the bundle whose fiber over a point is the cotangent line to the corresponding coarse curve at the $i$th marked point.

%%%%%%%%%%%%%  FJRW INVARIANTS  %%%%%%%%%%%%%%%%%%

\subsection{FJRW Invariants}\label{s:FJRWinv}

FJRW invariants can be defined for any pair $(W,G)$ where $W$ is a nondegenerate quasihomogeneous polynomial and $G$ is an admissible group. However, the most general definition is somewhat complicated, and unnecessary for our purposes here. To simplify the exposition, we will specialize to the case of interest to us, namely $W=x_1^5+\dots+x_5^5$ and $G=\SL_W$. 

$W$ is degree five with weights are $w_k  = 1$ for $1 \leq k \leq 5$. In this case $\jw=(e^{2\pi i/5}, \dots, e^{2\pi i/5})$, and $\hat c=3$. The group $G_{max}$ is isomorphic to $(\ZZ_5)^5$, and the subgroup $G = \SL_W$ is defined in \eqref{e:slw}.
By a slight abuse of notation, we will often represent a group element $h=(e^{2\pi i\varTheta_1(h)},\dots,e^{2\pi i\varTheta_N(h)})$ by
\[
h = (\varTheta_1(h), \ldots , \varTheta_5(h)).
\]
With this convention, we can write 
\[
 G  = \set{ \left(\tfrac{r_1}{5}, \ldots , \tfrac{r_5}{5}\right) \, | \, \sum_{k=1}^5 r_k \equiv 0 \, \imod{5}, 
  0 \leq r_k \leq 4 }.
\]

In computing the state space, we find that the only non--zero sectors are the identity sector $\sH_e$, and those with $N_h=0$. If $N_h = 0$ we call $\sH_h$ a ``narrow'' sector. Let $\hat{S}=\set{h\in G| N_h=0}$ denote the index set for the narrow sectors. As each narrow sector is fixed by $G$, the state space can be decomposed as 
\[
\sH_{W,G}=\sH_e\oplus \bigoplus_{h\in \hat{S}}\sH_{h}.
\]
with $\sH_{h}\cong \CC$. The elements of $\sH_e$ have degree three. The elements of each of the narrow sectors have even W--degree. In what follows we will focus on the subspace of narrow sectors, 
\[\sH_{W,G}^{nar} := \bigoplus_{h\in \hat{S}}\sH_{h}.\]

There is an obvious choice of basis $\set{\phi_h}_{h\in \hat{S}}$, where $\phi_h$ is the fundamental class in  $\sH_{h}$. Let $\set{\phi^h}$ denote the dual basis with respect to the pairing, i.e. $\phi^h=\phi_{h^{-1}}$.

The moduli space may now be described as
\[
W_{g,n,G}=\set{(\cC, p_1,\dots, p_n, \cL_1,\dots,\cL_5,\varphi_1,\dots,\varphi_5)|\varphi_k:\cL_k^{\otimes 5} \stackrel{\sim}{\to} \omega_{\log}, \; \otimes_{k=1}^5\cL_k\cong \omega_{\log}}.
\]

For each $\bv{h}=(h_1,\dots, h_n)\in (G)^{n}$, the virtual cycle $[W_{g,n}(\bv{h})]^{vir}$ has degree 
\[
2\left((\hat c -3)(1-g)+ n -\sum_{k=1}^n \iota(h_k)\right)=2n-2\sum_{k=1}^n \iota(h_k).
\] 
Let $\bv{h}=(h_1,\dots,h_n)$. Define the FJRW invariant
\[
\br{\psi^{a_1}\phi_{h_1},\dots, \psi^{a_n}\phi_{h_n}}^{(W,G)}_{g,n}:=\frac{1}{625^{g-1}}\int_{[W_{g,n,G}(\bv{h})]^{vir}}\prod_{i=1}^n \psi_i^{a_i}. 
\]
Extending linearly, we obtain invariants defined for any insertions in $\sH_{W,G}$. 

The perfect obstruction theory used to define the virtual class is given by $-R\pi_*(\bigoplus_{k=1}^5\LL_k)^\vee$.  In genus zero, the situation simplifies greatly:

\begin{prop}\label{p:concave}
The genus zero FJRW theory for the mirror quintic is concave, and
\[
-R\pi_*\Big(\bigoplus_{k=1}^5\LL_k\Big)^\vee = R^1\pi_*\Big(\bigoplus_{k=1}^5\LL_k\Big)^\vee.
\]
\end{prop}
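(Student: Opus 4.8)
The plan is to deduce the displayed identity from the single statement that $\bigoplus_{k=1}^5\LL_k$ is \emph{concave}, i.e. that $R^0\pi_*\LL_k = \pi_*\LL_k = 0$ for every $k$. Granting this, since $\pi$ has one-dimensional fibers we have $R^i\pi_*\LL_k = 0$ for $i\geq 2$, so $R\pi_*\big(\bigoplus_{k=1}^5\LL_k\big)$ is concentrated in cohomological degree one and satisfies $R\pi_*\big(\bigoplus_{k=1}^5\LL_k\big) = -R^1\pi_*\big(\bigoplus_{k=1}^5\LL_k\big)$ as a K-theory class; dualizing then yields $-R\pi_*\big(\bigoplus_{k=1}^5\LL_k\big)^\vee = R^1\pi_*\big(\bigoplus_{k=1}^5\LL_k\big)^\vee$, and $R^1\pi_*\big(\bigoplus_{k=1}^5\LL_k\big)$ is locally free because it is the only nonvanishing pushforward. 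Thus everything reduces to proving concavity, and by cohomology and base change this reduces to showing $H^0(C, L_k) = 0$ for every $k$ and every genus-zero fiber $C$ carrying the restricted $W$-structure $(L_1,\dots,L_5)$, where all markings are narrow.

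I would first treat an irreducible fiber. By \eqref{e:degree} with $g=0$, $q_k = 1/5$ and $d=5$, the coarse pushforward $|L_k|$ has degree $\tfrac{1}{5}(n-2) - \tfrac{1}{5}\sum_i \mult_{p_i}(L_k)$. Narrowness of every insertion forces $\varTheta_k(h_i)\neq 0$, hence $\mult_{p_i}(L_k)\geq 1$, so this degree is at most $-2/5$; being an integer it is negative, and therefore $H^0 = 0$ on an irreducible fiber.

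For a nodal fiber I would use that a genus-zero curve is a tree, so every node is separating, and I would fix $k$ and classify each node as narrow- or broad-for-$k$ according to whether its node type $h$ has $\varTheta_k(h)\neq 0$ or $\varTheta_k(h) = 0$. At a narrow-for-$k$ node the invariant local sections of $L_k$ vanish on each branch (the isotropy acts nontrivially on the fiber), so $H^0$ splits as a direct sum over the pieces obtained by normalizing at all narrow-for-$k$ nodes; it then suffices to prove the vanishing on each such piece, a genus-zero subtree whose every marking, including the half-nodes created by normalization, is narrow-for-$k$ and whose internal nodes are all broad-for-$k$. On such a piece I would argue by peeling leaves: a leaf component meets the rest in a single broad-for-$k$ node, so \eqref{e:degree} again gives $|L_k|$ negative degree there, any global section vanishes on the leaf and hence at the attaching node, and removing the leaf while twisting its neighbor down by that node preserves the hypotheses and lowers degrees; induction on the number of components then finishes the argument.

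The main obstacle is precisely this nodal case. Unlike the Fermat quintic with $G=\langle\jw\rangle$, for $G=\SL_W$ a node may be broad for one coordinate $k$ and narrow for another, and a component carrying two or more broad-for-$k$ nodes can have $|L_k|$ of non-negative degree, so there is no uniform per-component negativity and naive full normalization destroys $H^0$. The point of the argument is that one must normalize only at the narrow-for-$k$ nodes and then run the leaf-peeling induction: negativity is needed only on leaves, which have a single internal node, and the accumulated twists ensure each component is negative by the time it becomes a leaf.
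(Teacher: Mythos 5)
Your proof is correct, and its skeleton matches the paper's: reduce to $H^0(\cC,\cL_k)=0$ on each genus-zero fiber, pass to the coarse pushforward, get negativity of $\deg|\cL_k|$ on irreducible curves from \eqref{e:degree} and narrowness of the markings, and treat the nodal case by a leaf-peeling induction over the dual tree. The difference is in how the nodal induction is run. The paper makes no distinction between node types: it bounds every component uniformly by $\deg(|\cL_k|_{C_v})\leq \tfrac15\left(\#\text{nodes}(C_v)-2\right)<\#\text{nodes}(C_v)-1$ (inequality \eqref{e:deg}) and propagates vanishing inward from the leaves, a section vanishing at $t$ of the $t+1$ nodes of a component being forced to vanish identically on it. You instead split $H^0$ as a direct sum by normalizing at the narrow-for-$k$ nodes, and then peel leaves of each resulting piece, twisting the neighbor down at each broad attaching node. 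Your extra bookkeeping actually buys something: vanishing only propagates across broad-for-$k$ nodes, where the gluing condition identifies the two values of a coarse section; at a narrow-for-$k$ node the coarse sections on the two branches are unrelated, since each orbifold section vanishes there automatically and the gluing is vacuous. The paper's ``working in'' step implicitly propagates vanishing through every node, which at narrow nodes is not justified as written; the repair is that narrow nodes obey the stronger marking-type bound $1-\mult_\tau\leq 0$, so per-component degrees are controlled by broad nodes alone --- which is exactly the structure your partial normalization makes explicit. In short, the two arguments are the same in spirit, but yours is the more careful version of the nodal step, and the paper's version is repaired by precisely the narrow/broad distinction you introduce.
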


\begin{proof}
We will show that over any geometric point $(\cC,p_1,\dots,p_n,\cL_1,\dots,\cL_5,\varphi_1,\dots,\varphi_5)$ in the moduli space,  $\bigoplus_{k=1}^5 H^0(\cC,\cL_k) = 0$.  This then implies the result.
Let $f:\cC \to C$ denote the map from the stack $\cC$ to the coarse underlying curve $C$, and let $|\cL_k|$ denote the push forward $f_*\cL_k$.  Then  $H^0(\cC,\cL_k) \cong  H^0(C,|\cL_k|)$, thus it suffices to show that the line bundle $|\cL_k|$ has no global sections. 

Let $\Gamma$ be the dual graph to $C$ (see \cite{GrP}). Recall that each vertex $v$ of $\Gamma$ corresponds to a rational curve component $C_v$. Let $P_v$ denote the set of special points (marks and nodes) on $C_v$ and $k_v$ the number of such points. For $\tau\in P_v$, let $\mult_{\tau}(\cL_k)$ be the multiplicity of $\cL_k$ at the point $\tau$.    
As in equation~\eqref{e:degree}, the degree of the push forward $|\cL_k|_{C_v}$ can be expressed in terms of the multiplicity at each special point: 
\begin{align*}
\deg(|\cL_k|_{C_v})&=\tfrac 15(k_v-2)-\tfrac 15\sum_{\tau \in P}\mult_{\tau}(\cL_k) \\ &= -\tfrac 25+\tfrac 15\sum_{\tau\in P} \left(1-\mult_{\tau}(\cL_k)\right) 
\end{align*}

Since we have restricted our consideration to narrow sectors,  $\mult_{\tau}(\cL_k)> 0$ whenever $\tau$ is not a node. If $C$ is irreducible, we see that $\deg(|\cL_k|)$ is negative and $H^0(C,|\cL_k|) = 0$.  If $C$ is reducible, each component of $C_v$ has at least one node and we obtain
the following inequality:
\begin{equation}\label{e:deg}
\deg(|\cL_k|_{C_v}) \leq \tfrac 15(\#\text{nodes}(C_v)-2)<\#\text{nodes}(C_v)-1.
\end{equation}

Since we are in genus 0, $\Gamma$ is a tree. Choose one of the 1--valent vertices, $v$.  There is only one node on the corresponding rational component $C_v$.  By equation~\eqref{e:deg}, $\deg(|\cL_k|_{C_v})<0$ so any section of $|\cL_k|$ must vanish on $C_v$. 
Choosing a vertex attached to $t+1$ edges, \eqref{e:deg} yields $\deg(|\cL_k|_{C_v})<t$.  Therefore if a section of $|\cL_k|_{C_v}$ vanishes at $t$ of the nodes, we see by degree considerations that it must be identically zero on $C_v$.

By starting at the outer vertices of $\Gamma$ and working in, the above two facts allow one to show that a section of
$|\cL_k|$ must vanish on every component of $C$.
\end{proof}

On each $W$--curve in $W_{g, n, G}$ we have $\otimes_{k=1}^5 \cL_k \cong \omega_{\log}$.  This implies that $\cL_5$ is determined by $\cL_1$,$\cL_2$, $\cL_3$, $\cL_4$. We will use this fact to facilitate computation.

Let $(A_4)_{g,n}$ denote the moduli space of genus $g$, $n$--marked $A_4$--curves corresponding to the polynomial $A_4=x^5$. Such $W$--structures are often referred to as 5--spin curves.  Let $(A_4^4)_{g, n}$ denote the fiber product 
\[(A_4^4)_{g, n}:=
(A_4)_{g,n}\times_{\cM_{g,n,5}}(A_4)_{g,n}\times_{\cM_{g,n,5}}(A_4)_{g,n}\times_{\cM_{g,n,5}}(A_4)_{g,n}
\]

\begin{prop}There is a surjective map 
\[
s:(A_4^4)_{g, n}\to W_{g,n,G}
\]
which is a bijection at the level of a point. 
\end{prop}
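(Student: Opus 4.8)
The plan is to build $s$ directly, exploiting the remark just above the statement: on a $W$--curve in $W_{g,n,G}$ the isomorphism $\otimes_{k=1}^5\cL_k\cong\omega_{\log}$ determines $\cL_5$ from $\cL_1,\dots,\cL_4$, and the content of the proposition is that this determination is reversible at the level of points. A geometric point of $(A_4^4)_{g,n}$ consists of a $5$--stable curve $\cC$ (with its markings) together with four $5$--spin structures $(\cL_k,\varphi_k)$, $\varphi_k\colon\cL_k^{\otimes 5}\xrightarrow{\sim}\omega_{\log}$, sharing a common coarse curve. I would set
\[
\cL_5:=\omega_{\log}\otimes(\cL_1\otimes\cL_2\otimes\cL_3\otimes\cL_4)^{\vee},
\]
and let $\varphi_5\colon\cL_5^{\otimes 5}\xrightarrow{\sim}\omega_{\log}$ be the trivialization induced by $\varphi_1,\dots,\varphi_4$: since $\cL_5^{\otimes 5}\cong\omega_{\log}^{\otimes 5}\otimes(\cL_1\cdots\cL_4)^{\otimes(-5)}$ and $(\varphi_1\otimes\cdots\otimes\varphi_4)^{-1}$ identifies $(\cL_1\cdots\cL_4)^{\otimes(-5)}$ with $\omega_{\log}^{\otimes(-4)}$, this produces an isomorphism $\cL_5^{\otimes 5}\cong\omega_{\log}$. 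Defining $s$ to send the point to $(\cC,\cL_1,\dots,\cL_5,\varphi_1,\dots,\varphi_5)$, and noting that tensor products and duals commute with base change, the same formulas work in families and give a morphism of stacks.

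I would then check that $s$ lands in $W_{g,n,G}$ and is bijective on geometric points. The image is a $W$--structure since each $\varphi_k$ gives $\cL_k^{\otimes 5}\cong\omega_{\log}$, and it satisfies the $\SL_W$ condition because $\otimes_{k=1}^5\cL_k\cong(\cL_1\cdots\cL_4)\otimes\cL_5\cong\omega_{\log}$ by construction; at each marking $\mult_{p_i}(\cL_5)\equiv-\sum_{k=1}^4\mult_{p_i}(\cL_k)\pmod 5$, so the monodromy $h_i$ lies in $\SL_W$ and every such sector is attained. Injectivity on points is immediate, as $s$ remembers $(\cL_1,\dots,\cL_4,\varphi_1,\dots,\varphi_4)$: an isomorphism of two images restricts to an isomorphism of the four underlying spin structures over a common coarse curve, hence to an isomorphism of the original points of the fiber product. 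For surjectivity I would take a $W$--structure $(\cC,\cL_1,\dots,\cL_5,\varphi_1,\dots,\varphi_5)\in W_{g,n,G}$, forget $\cL_5,\varphi_5$ to obtain a point $P$ of $(A_4^4)_{g,n}$, and argue that $s(P)$ is isomorphic to the original: the $\SL_W$ condition supplies an isomorphism $\cL_5\cong\cL_5':=\omega_{\log}\otimes(\cL_1\cdots\cL_4)^{\vee}$, and since $\varphi_5$ and the reconstructed $\varphi_5'$ are two trivializations of the same line bundle they differ by a scalar $c\in\CC^*$, which can be absorbed by rescaling the isomorphism $\cL_5\cong\cL_5'$ by a fifth root of $c$.

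The step I expect to be the real subtlety is not this pointwise bijection but the reason the statement is phrased as a bijection on points rather than an equivalence of stacks. On automorphisms the construction forces the action on $\cL_5$ to be $(\psi_1\cdots\psi_4)^{-1}$, so $s$ identifies the ghost automorphism group $(\mu_5)^4$ of a point of $(A_4^4)_{g,n}$ with the subgroup $\{(\zeta_1,\dots,\zeta_5)\in(\mu_5)^5 : \prod_k\zeta_k=1\}$ of automorphisms of the image. Whether this subgroup exhausts the automorphisms of the $W$--structure depends on whether the trivialization of $\otimes_k\cL_k$ is treated as part of the data or merely as a condition, and the resulting $\mu_5$--gerbe ambiguity is exactly what prevents upgrading the bijection on points to an isomorphism of stacks. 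Establishing that $s$ is nonetheless a bijection on points, while keeping careful track of these $\mu_5$--factors, is where I would focus the argument.
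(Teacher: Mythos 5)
Your construction is exactly the paper's: you send $(\cL_1,\dots,\cL_4,\varphi_1,\dots,\varphi_4)$ to the $W$--structure with $\cL_5 = \omega_{\log}\otimes\big(\bigotimes_{k=1}^4\cL_k\big)^\vee$ and fifth--root structure $\varphi_1^\vee\otimes\cdots\otimes\varphi_4^\vee\otimes\id$, and then observe that every point of $W_{g,n,G}$ arises this way, which is precisely the paper's (much terser) argument. Your additional care with injectivity, the absorption of the scalar ambiguity in $\varphi_5$ by a fifth root, and the remark on why automorphisms prevent an isomorphism of stacks are all correct elaborations of what the paper leaves implicit.
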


\begin{proof}
The map is 
\[
(\cL_1,\dots,\cL_4,\phi_1,\dots,\phi_4)\to \Big(\cL_1,\dots,\cL_4,\Big(\big(\bigotimes_{k=1}^4 \cL_k^\vee\big)\otimes \omega_{log}\Big),\varphi_1,\dots,\varphi_4, \varphi_1^\vee\otimes\dots\otimes\varphi_4^\vee\otimes \id \Big). 
\]
Notice that the image of this map satisfies $\bigotimes_{k=1}^5\cL_k \cong \omega_{\log}$, and the fifth line bundle in the image is a fifth root of $\omega_{\log}$. Furthermore, every point in $W_{g,n,G}$ is of this form.  It is clear that this map is bijective at the level of a point.  This implies the proposition.
\end{proof}

Using the previous two propositions, we can give a more useful description of the genus zero correlators. Given $\bv{h}=(h_1,\dots,h_n)$, let us denote 
\[
A_4^4(\bv{h})_{g,n}:=(A_4)_{g,n}(\varTheta_1(h_1), \dots \varTheta_1(h_n))\times_{\cM_{g,n,d}}\dots\times_{\cM_{g,n,d}}(A_4)_{g,n}(\varTheta_4(h_1),\dots,\varTheta_4(h_n)).
\]
Each factor of $(A_4)_{g,n}$ is equipped with a universal $A_4$--structure. Abusing notation, we denote the universal line bundle over the $i$th factor of $(A_4^4)_{g, n}$ also by $\LL_i$.
By the universal properties of the $W$--structure on $W_{g,n}$, we have $s^*\LL_k\cong \LL_k$ for $1 \leq k \leq 4$. Define $\LL_5$ on $(A_4^4)_{g,n}$ as the pullback $s^*\LL_5$.

In \cite{FJR1} the authors show that $[W_{0,n,G}]^{vir}$ is Poincar\'{e} dual to $5c_{top}\Big(R^1\pi_*\big(\bigoplus_{i=1}^5 \LL_i\big)^\vee\Big)$ as a consequence of concavity. By the projection formula, we can pull the correlators back to $(A_4^4)_{0,n}$. The map $s$ has degree 5, so we get the following expression for the genus 0 correlators:
\[
\br{\psi^{a_1}\phi_{h_1},\dots, \psi^{a_n}\phi_{h_n}}^{(W,G)}_{0,n}=625\int_{A_4^4(\bv{h})_{0,n}} \prod_{i=1}^n \psi_i^{a_i}\cup c_{top}\Big(R^1\pi_*\big(\bigoplus_{i=1}^5 \LL_i\big)^\vee\Big)
\]

\section{Gromov--Witten theory of the mirror quintic}
Here we introduce the mirror quintic and describe its cohomology.

Recall the pair $(W,G)$ from Section~\ref{s:FJRWinv}.  Let $\bar{G}$ denote the quotient $G/\langle \jw \rangle$.  Let $\cY$ denote the global quotient orbifold
\[ \cY = [\PP^4/\bar{G}]\]
where the $\bar{G}$--action on $\PP^4$ comes from coordinate--wise multiplication.  
The mirror quintic $\cW$ is defined as the hypersurface
\[ 
\cW :=  \{ W = 0 \} \subset \cY.
\]

The correct cohomology theory for orbifold Gromov--Witten theory is \emph{Chen--Ruan orbifold cohomology}, defined via the \emph{inertia orbifold} (see \cite{ChenR1}).
If $\cX = [V/H]$ is a global quotient of a nonsingular
variety $V$ by a finite group $H$, the inertia orbifold $I\cX$ takes a particularly simple form.
Let $S_H$ denote the set of conjugacy classes $(h)$ in $H$,
then
\[
 I [V/H] = \coprod_{(h) \in S_H} [ V^h/C(h) ].
\]
As a vector space, the Chen--Ruan cohomology groups $H^*_{CR}(\cX)$ 
of an orbifold $\cX$ are the cohomology groups of 
its inertia orbifold:
\[
  H_{CR}^*(\cX) := H^*(I\cX).
\]

We will now describe the Chen--Ruan cohomology of the mirror quintic $\cW$.  For more detail, refer to \cite{LSh}.
For an element $g \in G$, denote by $[g]$ the corresponding element in $\bar{G}$ and $I(g) :=  \set{ k \in \{1, 2, 3, 4, 5\} \, | \, \varTheta_k(g) =0 }$. The order of this set is $N_g$ as defined in Section~\ref{FJRW}.

Fix an element $\bar{g} \in \bar{G}$. 
Given $g \in G$ such that $[g] =\bar{g}$, the set 
\[
  \PP^4_g := \left\{x_j = 0\right\}_{j \notin I(g)} \subset \PP^4
\] 
is a component of the fixed locus $(\PP^4)^{\bar{g}}$.  
From this we see that each element $g \in G$ such that 
$[g] =\bar{g}$ corresponds to a connected component $\cY_{g}$ of $I\cY$ 
associated with $\PP^4_g \subset (\PP^4)^{\bar{g}}$. 
Note that if $g$ has no coordinates equal to zero then $\PP^4_g$ 
is empty, and so is $\cY_g$.
This gives us a convenient way of indexing components of $I\cY$.  

Let
\[
 \cY_g = \{ (x, [g]) \in I\cY \, | \, x \in [\PP^4_g / \bar{G} ]\},
\]
and let $S$ denote the set of
all $g$ such that $\varTheta_k(g)$ is equal to $0$ for at least one $k$. Then
\begin{equation*}  
  I\cY = \coprod_{g \in S} \cY_{g}\:,
\end{equation*} 
with each $\cY_g$  a connected component. 

The inertia orbifold of the mirror quintic $\cW$ can be described in terms of $I\cY$.
The mirror quintic $\cW$ intersects nontrivially with $\cY_g$ exactly when $N_g \geq 2$. (that is, $\dim \cY_g \geq 1$.) 
Let 
\[ 
 \tilde{S} := \set{g \in G \big|\,  N_g \geq 2}.  
\]
Then 
\[
  I\cW = \coprod_{g \in \tilde{S}} \cW_g \,, \: \text{where }
   \cW_g = \cW \cap \cY_g.  
\] 
All nontrivial intersections are transverse, so 
\[
 \dim(\cW_g) = \dim(\cY_g) - 1 = N_g - 2.
\]
For $g \in \tilde{S}$, the \emph{age} of $g$ is defined as 
\[
\age(g) := \sum_{k = 1}^5 \varTheta_k(g).
\] 
The Chen--Ruan cohomology of $\cW$ is defined, as a graded vector space, by
\[ 
H^*_{CR}(\cW) :=  \bigoplus_{g \in \tilde{S}} H^{* - 2\on{age}(g)}(\cW_g).
\]
As in FJRW theory, we will only be interested in the subring of $H^*_{CR}(\cW)$
consisting of classes of even (real) degree.  We will denote this 
ring as $H^{even}_{CR}(\cW)$.

Let $\ii_g$ denote the constant function with value one on $\cW_g$.  Let $\overline{H}$ denote the class in $H^*(\cY)$ which pulls back to the hyperplane class in $\PP^4$ and $H$ the induced class on $\cW$.

A convenient basis for $H_{CR}^{even}(\cW)$ is 
\begin{equation*} 
  \bigcup_{g \in \tilde{S}}   \{\ii_g, \ii_g H,\ldots , 
  \ii_g {H}^{\dim(\cW_g)}\}.
\end{equation*}

Let $s$ represent the dual coordinate to $H \in H^*_{CR}(\cW)$.  We denote by $H^2(\cW)$ the subspace $sH$ of  classes in $H^2_{CR}(\cW)$ supported on the \emph{untwisted component} $\cW \subset I\cW$.

\section{Givental formalism and the LG/CY conjecture for the mirror quintic}\label{formal}

Similar to Section~\ref{FJRW}, given a smooth orbifold $\cX$, one may define Gromov--Witten invariants $\langle \psi^{a_1} \alpha_{1}, \ldots,  \psi^{a_n} \alpha_{n} \rangle^\cX_{g, n, d}$, where $d$ is the degree of the map from the source curve into $\cX$ and $\beta_i\in H_{CR}^{even}(\cX)$ (see e.g. \cite{AGrA} or \cite{ChenR2}).
Summing over the degree, we write  
\[
\langle \psi^{a_1} \alpha_{1}, \ldots,  \psi^{a_n} \alpha_{n} \rangle^\cX_{g, n}:=\sum_{d} Q^d \langle \psi^{a_1} \alpha_{1}, \ldots,  \psi^{a_n} \alpha_{n} \rangle^\cX_{g, n, d},
\]
where the $Q^d$ are formal Novikov variables used to guarantee convergence.

Let $\square$ denote a theory---either the Gromov--Witten theory of a space $\cX$ or the FJRW theory of a quasihomogeneous polynomial $(W,G)$---with state space $\left(H^\square, \langle - ,- \rangle_\square\right)$ with basis $\set{\beta_i}_{i\in I}$ and invariants 
\[
\langle \psi^{a_1} \beta_{i_1}, \ldots,  \psi^{a_n} \beta_{i_n} \rangle^\square_{g, n}.
\]
  
We may define formal generating functions of $\square$--invariants. Let $\bt = \sum_{i \in I} t^i \beta_i$ represent a point of $H^\square$ written in terms of the basis.  
For notational convenience denote the formal series $\sum_{k \geq 0} \bt_k \psi^k $ as $\bt(\psi)$.  
Define the genus $g$ generating function by
\[
\cF_g^\square := \sum_n \frac{1}{n !} \langle \bt(\psi), \ldots,  \bt(\psi) \rangle^\square_{g, n}.
\] 
Let $\cD$ denote the \emph{total genus descendent potential},
\[ 
\cD^\square := \exp \left(\sum_{g \geq 0} \hbar^{g-1} \cF_g^\square\right).
\]

As in Gromov--Witten theory, the correlators in FJRW theory satisfy the so--called \emph{string equation} (SE), \emph{dilation equation} (DE), and \emph{topological recursion relation} (TRR) (For the proof in orbifold Gromov--Witten theory see \cite{Tseng}, in the case of FJRW theory see \cite{FJR1}). 
These equations can be formulated in terms of differential equations satisfied by the various genus $g$ generating functions $\cF^\square_g$.   
We can use this extra structure to rephrase the theory in terms of Givental's \emph{overruled Lagrangian cone}.  For a more detailed exposition of what follows we refer the reader to Givental's original paper on the subject (\cite{G1}).

Let $\sV^\square$ denote the vector space $H^\square((z^{-1}))$, equipped with the symplectic pairing 
\begin{equation}\label{e:residue}
\Omega_\square(f_1, f_2) := \Res_{z=0}\langle f_1(-z), f_2(z)\rangle_\square.
\end{equation}
$\sV^\square$ admits a natural polarization $\sV^\square = \sV^\square_+ \oplus \sV^\square_-$ defined in terms of powers of $z$: 
\begin{align*}\sV^\square_+ &= H^\square[z], \\ \sV^\square_- &= z^{-1}H^\square[[z^{-1}]].\end{align*} 
We obtain Darboux coordinates  $\set{q_k^i, p_{k,i}}$ with respect to the polarization on $\sV^\square$ by representing each 
element of $\sV^\square$ in the form 
\[
\sum_{k \geq 0}\sum_{i \in I} q_k^i \beta_i z^k + \sum_{k \geq 0}\sum_{i \in I} p_{k,i}\beta^i (-z)^{-k-1}
\]
One can view $\cF^\square_0$ as the generating function of a Lagrangian subspace $\sL^\square$ of $\sV^\square$.  Let $\beta_0$ denote the unit in $H^\square$, and make the change of variables (the so--called Dilaton shift)
\[
q_1^0=t_1^0-1 \quad q_k^i=t_k^i \text{ for }(k,i)\neq (1,0).
\]
Then the set 
\[
\sL^\square :=\set{\bp =d_\bq \cF^\square_0}
\]
defines a Lagrangian subspace.  More explicitly, $\sL^\square$ contains the points of the form
\[
-\beta_0 z+ \sum_{\substack{k \geq 0 \\ i \in I}} t_k^i \beta_i z^k +\sum_{\substack{a_1, \ldots , a_n,  a\geq 0 \\ i_1, \ldots , i_n, i \in I}} \frac{t^{i_1}_{a_1}\cdots t^{i_n}_{a_n}}{n!(-z)^{a+1}}\langle\psi^{a_1}\beta_{i_1},\dots,\psi^{a_n}\beta_{i_n}, \psi^a\beta_i \rangle_{0, n+1}^\square\beta^i.
\]
Because $\cF^\square_0$ satisfies the
SE, DE, and TRR, $\sL^\square$ will take a special form.  In fact, $\sL^\square$ is a cone satisfying the condition that for all $f \in \sV^\square$,
\begin{equation}\label{e:overruled}\sL^\square \cap L_f = zL_f\end{equation} 
where $L_f$ is the tangent space to $\sL^\square$ at $f$. Equation~\eqref{e:overruled} justifies the term overruled, as each tangent space $L_f$ is  filtered by powers of $z$:
\[
L_f \supset zL_f \supset z^2L_f \supset \cdots
\] 
and $\sL^\square$ itself is ruled by the various $zL_f$.
The codimension of $zL_f$ in $L_f$ is equal to $\dim(H^\square)$.  

A generic slice of $\sL^\square$ parametrized by $H^\square$, i.e. 
\[
\{f(\bt)| \bt \in H^\square\} \subset \sL^\square,
\]
will be transverse to the ruling. Given such a slice, we can reconstruct $\sL^\square$ as
\begin{equation}\label{e:generic}
\sL^\square = \set{zL_{f(\bt)}| \bt \in H^\square}.
\end{equation}

Givental's $J$--function is defined in terms of the intersection
\[
\sL^\square \cap -\beta_0z \oplus H \oplus \sV^{-}.
\]
Writing things out explicitly, the $J$--function is given by
\[
J^\square(\bt, z) = \beta_0z + \bt + \sum_{n \geq 0} \sum_{\substack{ a \geq 0 \\ h \in I}} \frac{1}{n!z^{a+1}} \langle \bt, \ldots, \bt, \beta_h \psi^a \rangle^\square_{0, n+1} \beta^h.
\] 
In other words, we can obtain the $J$--function by setting $t_k^i=0$ whenever $k>0$. 

In \cite{G3} it is shown that the image of $J^\square(\bt, -z)$ is transverse to the ruling of $\sL^\square$, so $J^\square(\bt, -z)$ is a function satisfying \eqref{e:generic}.  Thus the ruling at $J^\square(\bt, -z)$ is spanned by the derivatives of $J^\square$, i.e.

\begin{equation}\label{e:Jgens}zL_{J^\square(\bt, -z)} = \big\{J^\square(\bt, -z) + z\sum c_i(z) \frac{\partial}{\partial t^i} J^\square(\bt, -z) | c_i(z) \in \CC[z]\big\}.
\end{equation}

By the string equation, $ z\frac{\partial}{\partial t^0} J^\square(\bt, z) = J^\square(\bt, z)$, so \eqref{e:Jgens} simplifies to
\[
zL_{J^\square(\bt, -z)} = \{ z\sum c_i(z) \frac{\partial}{\partial t^i} J^\square(\bt, -z) | c_i(z) \in \CC[z]\big\}.
\]

\subsection{The conjecture}
The LG/CY correspondence was first proposed by physicists (\cite{VW, W}), and is given as a conjecture in \cite{ChR}.  It is phrased mathematically as a correspondence between Gromov--Witten invariants of a Calabi--Yau manifold, and the FJRW invariants of a specified pair $(W,G)$.  In genus 0, the correspondence can be interpreted in terms of the Lagrangian cones of the respective theories.  In \cite{ChR} the genus 0 conjecture is proven for the Fermat quintic using this interpretation.  For simplicity we state the conjecture below only in the particular case of the \emph{mirror quintic}.

In what follows we will use $(W,G)$ in place of $\square$ to denote the FJRW theory of $(W,G)$ and $\cW$ in place of $\square$ to denote the Gromov--Witten theory of $\cW$.  The FJRW and Gromov--Witten state spaces will be $\sH^{nar}_{W,G}$ and $H^{even}_{CR}(\cW)$ respectively.  The full LG/CY correspondence may be stated as a relationship between $\cD^{(W,G)}$ and the analytic continuation of $\cD^{\cW}|_{Q^d = 1}$, where the latter represents the total genus descendant potential for $\cW$ after setting the Novikov variable to one.  Once  Novikov variables have been set to one, the conjecture may be phrased as follows:

\begin{conjecture}[\cite{ChR}]\label{conj} 
Let $\sV^{(W,G)}$ and $\sV^{\cW}$ be the Givental spaces corresponding to the FJRW theory of $(W,G)$ and the Gromov--Witten theory of $\cW$.
\begin{enumerate}
\item There is a degree--preserving $\CC[z, z^{-1}]$--valued linear symplectic isomorphism \[\UU: \sV^{(W,G)} \to \sV^{\cW}\] and a choice of analytic continuation of $\sL^{\cW}$ such that \[\UU(\sL^{(W,G)}) = \sL^{\cW}.\]
\item After analytic continuation, up to an overall constant the total potential functions are related by quantization of $\UU$, i.e.
\[\cD^{\cW} = \hat{\UU}(\cD^{(W,G)}).\]
\end{enumerate}
\end{conjecture}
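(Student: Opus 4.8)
The plan is to reduce the matching of Lagrangian cones to a comparison of solutions of a single Picard--Fuchs system, exploiting the two mirror theorems which realize both theories as such solutions but expanded around different points of the $\psi$--line. Concretely, Theorem~\ref{t:intromirror} presents the restriction of $J^{(W,G)}(\bt,z)$ to the one--parameter locus $\bt = t$, together with its derivatives $J_h^{(W,G)}(t,z)$, as a basis of solutions to the Picard--Fuchs equations of the family $M_\psi$ near $\psi = 0$; the mirror theorem of \cite{LSh} does the same for $J^{\cW}(\bs,z)$ restricted to $H^2_{un}(\cW)$ and its derivatives near $\psi = \infty$. Since $\psi = 0$ and $\psi = \infty$ are the two singular points of the same rank--four local system on the $\psi$--line, the two solution bases are related by analytic continuation along a path between the two regions. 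I would therefore begin by fixing such a path and computing the associated connection (transition) matrix $C$ between the local solution bases, working in the hypergeometric normal form of the Picard--Fuchs operator.

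Given $C$, the construction of $\UU$ is essentially forced. After applying the respective mirror changes of variables that identify each $J$--function with genuine period solutions, the matrix $C$, read back through these identifications, determines a $\CC[z,z^{-1}]$--linear map $\UU \colon \sV^{(W,G)} \to \sV^{\cW}$. The next step is to verify that $\UU$ is \emph{symplectic} with respect to the residue pairings $\Omega_\square$ of \eqref{e:residue} and \emph{degree--preserving}: symplecticity should follow because the intersection pairing on the middle cohomology of $M_\psi$ is monodromy--invariant, so its expressions in the two bases differ precisely by $C$, while the degree constraint pins down the powers of $z$. The uniqueness clause of Theorem~\ref{t:U} then records that this $\UU$ is determined up to an overall scalar and the choice of continuation path.

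Having produced $\UU$, I would match the \emph{small slices} of the cones. By \eqref{e:Jgens} the ruling of $\sL^\square$ at $J^\square(\bt,-z)$ is spanned over $\CC[z]$ by the first derivatives $\partial J^\square/\partial t^i$, so the small slice (Definition~\ref{d:small}) is reconstructed from the $J$--function and its derivatives alone. Since $\UU$ was built to intertwine exactly these data under analytic continuation, it carries the analytically continued small slice of $\sL^{\cW}$ onto the small slice of $\sL^{(W,G)}$, which is Theorem~\ref{t:main}. The remaining content of part (1) of Conjecture~\ref{conj} --- the identification of the \emph{entire} cones --- requires propagating this matching off the one--parameter locus. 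Here one would invoke the reconstruction of $\sL^\square$ from a transverse slice via \eqref{e:generic} together with the string equation and topological recursion relations; the genuine difficulty is that FJRW theory carries no divisor equation, so the small slice does not obviously generate the full state space under the available recursions, and additional computation or a separate reconstruction argument is needed to close this gap.

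The main obstacle, however, is part (2): the higher--genus statement $\cD^{\cW} = \hat{\UU}(\cD^{(W,G)})$. This cannot be deduced from the genus--zero matching by any formal argument, since higher--genus invariants are not determined by genus zero in general. The natural tool --- Givental's quantization together with Teleman's classification of semisimple cohomological field theories --- does not apply directly, because both theories here are of Calabi--Yau type and their Frobenius structures are \emph{not} semisimple, so that reconstruction is unavailable. Establishing part (2) would thus require either a new non--semisimple reconstruction input or an independent analysis of the higher--genus potentials, and I expect this to be the hardest and most open part of the problem; it is precisely the point at which I would stop short of a complete proof and leave the statement as a conjecture.
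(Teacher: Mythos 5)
Your proposal matches the paper's treatment of this statement: the paper itself leaves Conjecture~\ref{conj} as a conjecture, and what it actually establishes --- Theorems~\ref{t:U} and \ref{t:main} --- follows exactly the route you outline, namely combining the two mirror theorems, Mellin--Barnes analytic continuation of the explicit hypergeometric $I$--functions, construction of $\UU$ so as to intertwine the continued $I^{\cW}_g$ with the $I^{(W,G)}_h$, and the observation \eqref{e:Jgens} that the $\CC[z]$--span of first derivatives of $J$ determines the small slices. Your only deviation is in one verification step (the paper checks that $\UU$ is symplectic by direct computation on its $4\times 4$ and $2\times 2$ diagonal blocks rather than by a monodromy--invariance argument), and your assessment of what remains open --- extending the identification from the small slices to the full cones in part (1), and the higher--genus statement in part (2), where semisimple reconstruction is unavailable --- coincides precisely with what the paper leaves unproven.
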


\begin{remark}\label{conv1} It is not guaranteed that $\cD^{\cW}|_{Q^d = 1}$ is an analytic function. Implicit in the conjecture, however, is the claim that after setting the Novikov variables to one, $\cD^{\cW}$ converges in some neighborhood.  Thus one must first check convergence in order to prove the LG/CY correspondence.  For the purposes of this paper however, the necessary convergence will follow from the mirror theorem of \cite{LSh} restated here in equation~\eqref{e:A-model}.
\end{remark}

\subsubsection{The Small Slice of $\sL$}\label{ss:slice}

In \cite{ChR}, the LG/CY correspondence is proven by relating the respective $J$--functions for the two theories.  A crucial point in the argument is that in the case of the quintic three--fold $M$, the $J$--function $J^{M}(\bs, z)$ (and hence the full Lagrangian cone $\sL^{M}$) may be recovered from the small $J$--function \[J^{M}_{small}(s, z) := J^{M}(\bs, z)|_{\bs = s \in H^2(M)}.\]  This is no longer true for the mirror quintic. 

Although calculating the big $J$--function for $\cW$ appears to be a difficult problem, in \cite{LSh} its derivatives $\frac{\partial}{\partial s^i} J^{\cW}(\bs, z)$ may be calculated at any point $sH \in H^2(\cW)$.  This allows us to prove a ``small'' version of the LG/CY correspondence for the mirror quintic.  We will phrase the theorem in analogy with Conjecture~\ref{conj}.

In order to do so we define the \emph{small slice} of $\sL^{\cW}$ and $\sL^{(W,G)}$ to be that part of the ruling  coming from $sH \in H^2(\cW)$ and $t\phi_{\jw^2} \in \sH^2_{W,G}$ respectively:
\begin{definition}\label{d:small} The small slices of $\sL^{\cW}$ and $\sL^{(W,G)}$ are defined by
\[
 \sL^{\cW}_{small} :=  \{zL_{J^{\cW}(\bs, -z)} | \bs  = sH\}.
\]
\[
 \sL^{(W,G)}_{small} :=  \{zL_{J^{(W,G)}(\bt, -z)}| \bt =t\phi_{\jw^2}\}.
\]
\end{definition}

Our main theorem may then be stated as a correspondence between the small slices of the Lagrangian cones $\sL^{(W,G)}$ and $\sL^{\cW}$.

\begin{theorem}(=Theorem~\ref{t:main})
There exists a symplectic transformation $\UU$ identifying the analytic continuation of  $\sL^{\cW}_{small}$ with $\sL^{(W,G)}_{small}$.
\end{theorem}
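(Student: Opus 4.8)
The plan is to leverage the two mirror theorems already in hand --- the Gromov--Witten mirror theorem of \cite{LSh} and the FJRW mirror theorem recorded as Theorem~\ref{t:intromirror} --- in order to reduce the identification of small slices to a comparison of two bases of solutions of a single Picard--Fuchs system. I would begin by making the description of the small slices completely explicit. By \eqref{e:Jgens} together with the simplification afforded by the string equation, the ruling $zL_{J^{\cW}(sH,-z)}$ is spanned over $\CC[z]$ by the derivatives $\frac{\partial}{\partial s^i}J^{\cW}(\bs,-z)$ evaluated at $\bs = sH$, and likewise $zL_{J^{(W,G)}(t\phi_{\jw^2},-z)}$ is spanned by the derivatives of $J^{(W,G)}$. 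Thus $\sL^{\cW}_{small}$ and $\sL^{(W,G)}_{small}$ are determined entirely by the two $J$--functions and their first derivatives $J^{\cW}_g$ and $J^{(W,G)}_h$ --- precisely the objects controlled by the two mirror theorems. (This is also where the necessity of the derivatives, and not merely the $J$--functions themselves, enters: without them one cannot pin down the full ruling, and hence $\UU$.)

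Next I would assemble these derivatives into fundamental solution matrices. The mirror family $M_\psi$ has a one--parameter space of complex structures, so the Picard--Fuchs equation for its holomorphic $(3,0)$--form has a four--dimensional solution space, and the remaining non--holomorphic families of $3$--forms are governed by the associated Gauss--Manin system. By \cite{LSh}, after the prescribed change of variables, the components of $J^{\cW}$ and its derivatives furnish a basis of solutions holomorphic near $\psi = \infty$; by Theorem~\ref{t:intromirror} the components of $J^{(W,G)}$ and its derivatives furnish a basis of solutions holomorphic near $\psi = 0$. Since $0$ and $\infty$ are regular singular points of the same equation, analytic continuation along a fixed path produces an invertible constant connection matrix relating the two bases. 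After incorporating the mirror map between the coordinates $t$ and $s$ and the identification of state--space bases dictated by the grading, this connection matrix defines a candidate linear transformation $\UU : \sV^{(W,G)} \to \sV^{\cW}$.

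The crux is then to verify that $\UU$ is symplectic and that it respects the $z$--dependence correctly. Under the period identification, the Givental pairings $\Omega_{\cW}$ and $\Omega_{(W,G)}$ of \eqref{e:residue} should both correspond to the topological intersection pairing on the $\bar{G}$--invariant part of $H^3(M_\psi;\CC)$; the key point is that this intersection form is flat for the Gauss--Manin connection and therefore preserved under analytic continuation, which forces the connection matrix --- hence $\UU$ --- to be symplectic. I expect this matching to be the main obstacle: one must reconcile the formal Laurent variable $z$, and the $z \mapsto -z$ appearing in $\Omega_\square$, with the Hodge--theoretic structure of the variation, and check that the spanning relations \eqref{e:Jgens} on one side are carried to spanning relations on the other, so that entire rulings, and not merely individual $J$--function values, are identified. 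Granting this, $\UU$ sends solutions to solutions and derivatives to derivatives, and therefore sends $\sL^{(W,G)}_{small}$ onto the analytic continuation of $\sL^{\cW}_{small}$, which is the assertion of the theorem.
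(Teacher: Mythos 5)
Your overall skeleton---small slices spanned by derivatives of the $J$--functions via \eqref{e:Jgens}, mirror theorems trading those derivatives for explicit $I$--functions, analytic continuation between $\psi=0$ and $\psi=\infty$ producing the identification---does match the paper's strategy in outline: the paper proves two lemmas showing $\sL^{\cW}_{small}$ and $\sL^{(W,G)}_{small}$ are determined by $\{I^{\cW}_g\}$ and $\{I^{(W,G)}_h\}$ for classes of degree at most two (this step needs the Dubrovin--connection/quantum--differential--equation argument you gloss over, since the mirror theorems only control derivatives in the degree $\leq 2$ directions), and then invokes Theorem~\ref{t:U}. But the crux of Theorem~\ref{t:U}---the existence of a degree--preserving, $\CC[z,z^{-1}]$--valued, \emph{symplectic} $\UU$ matching the $I$--functions---is exactly the step your proposal does not prove. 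The paper establishes it by brute force: each $I^{\cW}_g$ is analytically continued via Mellin--Barnes contour integrals, the result is matched term--by--term against the explicit hypergeometric formulas for the $I^{(W,G)}_h$ computed from the twisted theory, $\UU$ is read off block by block (one $4\times 4$ block and the rest $2\times 2$), and symplecticity is verified by a direct calculation on those blocks.

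Your substitute for that computation---that both Givental pairings \eqref{e:residue} correspond, under the period identifications, to the topological intersection pairing on $H^3(M_\psi;\CC)$, which is Gauss--Manin flat and hence preserved by continuation---is a genuine gap. Neither \cite{LSh} nor Theorem~\ref{t:intromirror} says anything about pairings: those results identify \emph{components} of $J$--functions with solutions of Picard--Fuchs equations, and the compatibility of the residue pairings with the intersection form is a strictly stronger statement than the mirror theorems you cite (note also that the $\bar{G}$--invariant part of $H^3(M_\psi)$ is only $4$--dimensional, so the pairing you need lives on all of $H^3(M_\psi)$, isotypic block by isotypic block). You flag this as ``the main obstacle'' and then grant it; but granting it amounts to granting the theorem, since this is precisely what the explicit construction in Theorem~\ref{t:U} exists to prove. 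Relatedly, a constant connection matrix between scalar solution bases does not immediately yield a map $\sV^{(W,G)} \to \sV^{\cW}$ of the required form: re--expressing it through the cohomology--valued framings (powers of $z$, classes $\ii_g H^k$) so that the result is $\CC[z,z^{-1}]$--valued and degree--preserving requires the explicit shape of the $I$--functions anyway. Finally, beware a circularity: by Remark~\ref{r:mirror}, Theorem~\ref{t:intromirror} is itself obtained either by a direct check or as a consequence of Theorem~\ref{t:U}, so if you use it as an input you must commit to the direct check.
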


\section{Twisted theory}\label{twisted}
In this section we compute the FJRW invariants necessary to prove the correspondence.
Fix as a basis for $\sH_{W,G}^{nar}$ the set $\set{\phi_h}_{h\in \hat{S}}$ defined in Section~\ref{s:FJRWinv}. 

We will construct a \emph{twisted} FJRW theory whose invariants coincide with those of $(W,G)$ in genus zero.
We first extend the state space
\[
\sH_{W,G}^{ext}:=\sH_{W,G}^{nar}\oplus\bigoplus_{h\in G \setminus \hat{S}} \phi_h \CC.
\]
Any point $\bt\in\sH_{W,G}^{ext}$ can be written as $\bt=\displaystyle \sum_{h \in G}t^h\phi_h$. 
Let $i_k(h) := \langle \varTheta_{k}(h)-\tfrac15 \rangle$, where $\langle - \rangle$ denotes the fractional part. Notice $i_k(h)=\tfrac 45$ exactly when $\varTheta_k(h)=0$. 
Set 
\[
\deg_W(\phi_h) := 2\sum_{k=1}^5 i_k(h).
\]
For $h \in \hat{S}$, this definition matches the W--degree defined in \eqref{e:degw}.

We extend the definition of our FJRW invariants to include insertions $\phi_h$ in $\sH_{W,G}^{ext}$.  Namely, set 
\[
\br{\psi^{a_1}\phi_{h_1},\dots, \psi^{a_n}\phi_{h_n}}_{0,n}^{(W,G)} = 0
\] 
if $h_i \in G \setminus \hat{S}$.

We would like to unify our definition of the extended FJRW invariants, by re--expressing them as integrals over $ (\widetilde A_4^4)_{0,n}$, a slight variation of $ (A_4^4)_{0,n}$.
We will make use of the following lemma.

\begin{lemma}[\cite{ChR}]\label{l:linebndles}
Let $\cC$ be a $d$--stable curve and let $M$ be a line bundle pulled back from the coarse space. If $l|d$, there is an equivalence between two categories of $l$th roots $\cL$ on $d$--stable curves:
\[
\set{\cL|\cL^{\otimes l}\cong M}\leftrightarrow \bigsqcup_{0\leq E<  \sum lD_i}\set{\cL|\cL^{\otimes l}\cong M(-E), \mult_{p_i}(\cL)=0}.
\]
where the union is taken over divisors $E$ which are linear combinations of integer divisors $D_i$ corresponding to the marked points $p_i$. 
\end{lemma}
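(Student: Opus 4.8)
The statement to prove (Lemma on $l$-th roots):

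\begin{lemma}
Let $\cC$ be a $d$-stable curve and let $M$ be a line bundle pulled back from the coarse space. If $l\mid d$, there is an equivalence between two categories of $l$th roots $\cL$ on $d$-stable curves:
\[
\set{\cL\mid\cL^{\otimes l}\cong M}\leftrightarrow \bigsqcup_{0\leq E<\sum lD_i}\set{\cL\mid\cL^{\otimes l}\cong M(-E),\ \mult_{p_i}(\cL)=0},
\]
where the union is taken over divisors $E$ which are linear combinations of integer divisors $D_i$ corresponding to the marked points $p_i$.
\end{lemma}

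The plan is to construct the equivalence explicitly by adjusting a root $\cL$ at the marked points to kill its multiplicities, recording exactly how much twisting was done in the divisor $E$. First I would set up the local picture: near a marked point $p_i$ the curve $\cC$ is the stack quotient $[\,\mathrm{Spec}\,\CC[u]/\mu_d\,]$ with $\mu_d$ acting by $u\mapsto \zeta_d u$, and any line bundle is locally $\cL=\cO\cdot s$ with the generator carrying a $\mu_d$-weight, so that $\mult_{p_i}(\cL)=m_i\in\{0,\dots,d-1\}$ measures the isotropy action. The condition $\cL^{\otimes l}\cong M$ with $M$ pulled back from the coarse space forces, on stabilizers of order $d$, a compatibility $l\cdot m_i \equiv 0\pmod d$, which since $l\mid d$ means $m_i$ is a multiple of $d/l$; writing $m_i=(d/l)b_i$ with $0\le b_i<l$ identifies the local data with the integers $b_i$. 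I would then let $D_i$ be the reduced divisor at $p_i$ (really the stacky point with its $\ZZ/d$ structure), so that twisting by $D_i$ lowers the multiplicity by $d/l$, and set $E=\sum_i b_i D_i$; this is the divisor appearing in the bound $0\le E<\sum l D_i$.

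The forward functor sends $\cL$ with $\cL^{\otimes l}\cong M$ to the pair $(\cL(-E),\,E)$. The key computation is that $\mult_{p_i}(\cL(-E))=m_i-(d/l)b_i\cdot(\text{order contribution})=0$ by the choice of $b_i$, and that
\[
(\cL(-E))^{\otimes l}\cong \cL^{\otimes l}(-lE)\cong M(-lE)=M\Big(-\sum_i l b_i D_i\Big),
\]
so after absorbing the notation $E\leadsto lE$ (matching the paper's convention that the right-hand side records $M(-E)$ with $0\le E<\sum l D_i$) the resulting root has vanishing multiplicities and the correct $l$th power. Conversely, given $(\cL',E)$ on the right with $\cL'^{\otimes l}\cong M(-E)$ and $\mult_{p_i}(\cL')=0$, the inverse functor returns $\cL:=\cL'(E')$ for the appropriate stacky lift $E'$ of $E$; one checks $\cL^{\otimes l}\cong M(-E)(lE')\cong M$ and that twisting a multiplicity-zero bundle by $E'$ reconstructs the original multiplicities. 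The two constructions are mutually inverse because the divisor $E$ is uniquely determined by the multiplicities $m_i$ (equivalently the $b_i$), which is exactly the content of the disjoint-union indexing.

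The main obstacle is bookkeeping the stacky structure correctly: on a $d$-stable curve the divisor $D_i$ attached to a marked point carries a $\ZZ/d$ gerbe-like structure, and the relation between $\deg$, $\mult$, and tensoring by $\cO(D_i)$ must be tracked on the stack rather than the coarse space. The formula \eqref{e:degree} controls how pushing forward to the coarse curve converts multiplicities into integrality constraints, and I would use it to verify that the twisted bundles $\cL(-E)$ remain genuine $l$th roots (i.e. that $M(-lE)$ is still pulled back up to the allowed twist) and that the range $0\le E<\sum lD_i$ is precisely the fundamental domain for the multiplicities. Once the local weight computation at each $p_i$ is pinned down, functoriality and the equivalence of categories (including morphisms, which are just isomorphisms of roots and go along for the ride) follow formally; the essential surjectivity and full faithfulness reduce to the uniqueness of $E$ given the multiplicity data. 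Since this lemma is attributed to \cite{ChR}, I would ultimately cite their argument for the full stack-theoretic details, presenting the above as the conceptual skeleton.
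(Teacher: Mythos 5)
Your strategy --- kill the multiplicities at the markings by twisting, record the twist in the divisor $E$, and invert by twisting back --- is exactly the mechanism of the paper's proof, which states it in one line as $\cL \mapsto p^*p_*(\cL)$, where $p$ forgets the stabilizers at the markings (this pullback--pushforward is precisely the ``twist down by the multiplicities'' operation). Your derivation that $m_i = (d/l)b_i$ with $0 \le b_i < l$ is also correct. However, the key computation fails as written because of a divisor mix-up. Write $\sigma_i$ for the reduced stacky point at $p_i$, so that the lemma's integer divisor is $D_i = d\sigma_i$ (its associated line bundle is pulled back from the coarse curve and has multiplicity $0$). Your claim that ``twisting by $D_i$ lowers the multiplicity by $d/l$'' is false under either reading of $D_i$: twisting by $\sigma_i$ changes the multiplicity by $1$ (mod $d$), and twisting by $d\sigma_i$ changes it by $0$. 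Hence with your stated reading ($D_i$ the reduced stacky point) and $E = \sum_i b_i D_i$, one gets $\mult_{p_i}(\cL(-E)) = m_i - b_i = b_i\left(\tfrac{d}{l}-1\right)$, which is nonzero whenever $l < d$ and $b_i \neq 0$; the ``(order contribution)'' factor and the later step ``absorbing the notation $E \leadsto lE$'' are papering over this, and the rescaling cannot be repaired after the fact, since $lE = \sum_i l b_i \sigma_i$ need not even be an integer divisor. (In the paper's application $l = d = 5$, so $d/l = 1$ and your computation happens to be right; it is the general case $l \mid d$ claimed by the lemma that breaks.)

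The repair is to twist by the fractional divisor built from the multiplicities themselves: set $\cL' := \cL\big(-\sum_i m_i \sigma_i\big)$, which is canonically $p^*p_*\cL$. Then $\mult_{p_i}(\cL') = 0$, and since $l m_i = d b_i$,
\[
(\cL')^{\otimes l} \cong \cL^{\otimes l}\Big(-\sum_i l m_i \sigma_i\Big) \cong M\Big(-\sum_i b_i\,(d\sigma_i)\Big) = M\Big(-\sum_i b_i D_i\Big),
\]
so the integer divisor $E = \sum_i b_i D_i$ with $0 \le b_i < l$ comes out on the nose, with no renormalization of $E$ needed. The inverse functor reads the $b_i$ off $E$ and returns $\cL'\big(\sum_i (d/l) b_i \sigma_i\big)$; your ``appropriate stacky lift $E'$ of $E$'' is exactly $E/l$ as a $\mathbb{Q}$--divisor, and making this explicit is what closes the argument. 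With these corrections your proof is complete and coincides with the paper's.
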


\begin{proof}
Let $p$ denote the map which forgets stabilizers along the markings.  The correspondence is simply $\cL \mapsto p^* p_*(\cL)$.  
\end{proof}

\begin{definition}For $m_1,\dots, m_n\in \{\frac{1}{5}, \frac{2}{5}, \frac{3}{5}, \frac{4}{5}, 1\}$, consider the stack $\tilde A_4\left(m_1,\dots,m_n\right)_{g,n}$ classifying genus $g$, $n$--pointed, 5--stable curves equipped with fifth roots:
\[
\widetilde A_4\left(m_1,\dots,m_n\right)_{g,n}:=\set{(\cC, p_1,\dots, p_n, \cL, \varphi)|\phi:\cL^{\otimes 5}\stackrel{\sim}{\to} \omega_{\log}(-\sum_{i=1}^n5m_iD_i),\; \mult_{p_i}(\cL)=0}, 
\]
where the integer divisors $D_i$ correspond to the markings $p_i$. 
\end{definition}
The moduli space $\widetilde A_4\left(m_1,\dots,m_n\right)_{g,n}$ also has a universal curve $\sC\to \widetilde{A}_4$ and a universal line bundle $\widetilde{\LL}$. 

We now define an analogue of $(A_4^4)_{g,n}$, replacing $(A_4)_{g,n}$ with $(\widetilde{A}_4)_{g,n}$ in each factor.  
For $1 \leq i \leq n$, let $m_i = (m_{1i}, \ldots, m_{5i})$ be a $5$-tuple of fractions satisfying $m_{ki} \in \{\frac{1}{5}, \frac{2}{5}, \frac{3}{5}, \frac{4}{5}, 1\}$, and $\langle \sum_{k=1}^5 m_{ki} \rangle = 0$.  Let $\bv{m}$ denote the $5 \times n$ matrix $(\bv{m})_{ki} = m_{ki}$.

Define
\[
\widetilde{A}_4^4(\bv{m})_{g,n} := \widetilde A_4(m_{11},\dots,m_{1n})_{g,n}\times_{\cM_{g,n,5}}\dots \times _{\cM_{g,n,5}} \widetilde A_4(m_{41},\dots,m_{4n})_{g,n}.
\]
$\widetilde{A}_4^4(\bv{m})_{g,n}$ carries four universal line bundles $\widetilde{\LL}_1,\dots,\widetilde{\LL}_4$ satisfying 
\[
(\widetilde{\LL}_k)^{\otimes 5} \cong \omega_{\log}\left(- \sum_{i=1}^n 5m_{ki}D_i\right).
\] 
Define a fifth line bundle
\[
\widetilde{\LL}_5:=\widetilde{\LL}_1^\vee\otimes\dots\otimes\widetilde{\LL}_4^\vee\otimes \omega_{\log}\left( - \sum_{i=1}^n \sum_{k=1}^5m_{ki}D_i\right).
\] 
One can check that $(\widetilde {\LL}_5)^{\otimes 5}\cong \omega_{\log}(- \sum_{i=1}^n 5m_{5i}D_i)$.   
 
The above moduli space yields a uniform way of defining the extended FJRW invariants for $(W,G)$.  Given $\phi_{h_1}, \ldots, \phi_{h_n} \in \sH_{W,G}^{ext}$, let 
\begin{equation*}
I(\bv{h}) =\left(\begin{matrix}
i_1(h_1)+\frac15 & \cdots & i_1(h_n)+\frac15\\
\vdots & & \vdots\\
i_5(h_1)+\frac15 & \cdots & i_5(h_n)+\frac15
\end{matrix}\right).
\end{equation*}
Consider the following proposition.

\begin{prop}\label{p:chern}  
On $\widetilde{A}_4^4(I(\bv{h}))_{0,n}$,  $\pi_*\big(\bigoplus_{k=1}^5\widetilde{\LL}_k\big)$ vanishes and $R^1\pi_*\big(\bigoplus_{k=1}^5\widetilde{\LL}_k\big)$ is locally free.  Furthermore, 
\begin{equation}\label{e:calc12}
\br{\psi^{a_1}\phi_{h_1},\dots, \psi^{a_n}\phi_{h_n}}_{0,n}^{(W,G)} =625 \int_{\tilde{A}^4_4(I(\bv{h}))_{0,n}} \prod \psi_i^{a_i}\cup c_{top}\Big(R^1\pi_*\big(\bigoplus_{k=1}^5\widetilde{\LL}_k\big)^\vee\Big).
\end{equation}
\end{prop}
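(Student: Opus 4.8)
The plan is to reduce the extended FJRW invariants to integrals over $\widetilde{A}_4^4(I(\bv{h}))_{0,n}$ by matching this new moduli space with the space $A_4^4(\bv{h})_{0,n}$ appearing in the unextended integral formula already established at the end of Section~\ref{s:FJRWinv}. The key observation is that Lemma~\ref{l:linebndles} provides precisely the dictionary between roots $\cL_k$ of $\omega_{\log}$ (with prescribed multiplicities $\mult_{p_i}(\cL_k)/5 = \varTheta_k(h_i)$) and roots $\widetilde{\LL}_k$ of the twisted bundle $\omega_{\log}(-\sum 5 m_{ki} D_i)$ with vanishing multiplicities at the markings. Under this correspondence one takes $\widetilde{\LL}_k = p^* p_* \cL_k$, and the shift in the divisor is dictated by the multiplicities; the choice $m_{ki} = i_k(h_i) + \tfrac15$ in the matrix $I(\bv{h})$ is exactly what is needed to translate $\varTheta_k(h_i)$ into the corresponding twist, using $i_k(h) = \langle \varTheta_k(h) - \tfrac15 \rangle$.

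The steps I would carry out are as follows. First, I would split into the two cases that define the extended invariant. When all $h_i \in \hat{S}$ (narrow), I would show that Lemma~\ref{l:linebndles} gives an isomorphism of moduli stacks $A_4^4(\bv{h})_{0,n} \cong \widetilde{A}_4^4(I(\bv{h}))_{0,n}$ under which $\cL_k$ corresponds to $\widetilde{\LL}_k$; since $\widetilde{\LL}_k = p^* p_* \cL_k$ differs from $\cL_k$ by pullback along the stabilizer-forgetting map $p$, their higher direct images agree, so $R^1\pi_*\big(\bigoplus \widetilde{\LL}_k\big)^\vee$ matches $R^1\pi_*\big(\bigoplus \cL_k\big)^\vee$ and the integral \eqref{e:calc12} reproduces the formula at the end of Section~\ref{s:FJRWinv}. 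Second, for the vanishing of $\pi_*$ and local freeness of $R^1\pi_*$, I would reuse the concavity argument of Proposition~\ref{p:concave}: the degree computation \eqref{e:degree} together with the positivity $\mult_{p_i}(\widetilde{\LL}_k) = 0$ forces $H^0 = 0$ on every component, after which cohomology and base change yields that $R^1\pi_*$ is locally free. Third, for the case where some $h_i \in G \setminus \hat{S}$, I must verify that the right-hand side of \eqref{e:calc12} also vanishes, matching the definition of the extended invariant as zero; here I would argue that a broad insertion forces $N_{h_i} > 0$, so the corresponding factor fails the narrowness needed for concavity, and the relevant $c_{top}$ vanishes by a degree or dimension count against the virtual cycle.

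The main obstacle I expect is the bookkeeping in the second case: showing that the newly defined integral genuinely vanishes when a broad sector appears, rather than merely asserting it by fiat as in the extension of the invariants. Since the definition sets the broad invariants to zero by hand, the content of the proposition is that the integral formula is \emph{consistent} with this choice, so I would need to check that the Chern class $c_{top}\big(R^1\pi_*(\bigoplus \widetilde{\LL}_k)^\vee\big)$ has the wrong degree (i.e.\ does not match the dimension of $\widetilde{A}_4^4(I(\bv{h}))_{0,n}$) precisely when $m_{ki} = 1$ occurs for some $k$, corresponding to $\varTheta_k(h_i) = 0$. The degree-shifting computation through \eqref{e:degw} and the extended degree $\deg_W(\phi_h) = 2\sum i_k(h)$ should make this transparent, but verifying that the twist by $-\sum 5 m_{ki} D_i$ interacts correctly with the rank and degree of $R^1\pi_*$ is where the care is required.
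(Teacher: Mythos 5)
Your first two steps (the identification $A_4^4(\bv{h})_{0,n} \cong \widetilde{A}_4^4(I(\bv{h}))_{0,n}$ via Lemma~\ref{l:linebndles} for narrow insertions, and the reuse of the dual-graph degree argument of Proposition~\ref{p:concave} to get vanishing of $\pi_*$ and local freeness of $R^1\pi_*$) are exactly what the paper does. The genuine gap is in your third step, the broad case. You propose to show that the right-hand side of \eqref{e:calc12} vanishes for broad insertions by a degree/dimension mismatch, i.e.\ that $c_{top}\big(R^1\pi_*(\bigoplus_k \widetilde{\LL}_k)^\vee\big)$ never has the complementary degree when some $m_{ki}=1$. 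This is false. A rank computation via Riemann--Roch gives $\on{rank}\, R^1\pi_*\big(\bigoplus_k \widetilde{\LL}_k\big) = \tfrac12 \sum_i \deg_W(\phi_{h_i}) - 3$ with the \emph{extended} degree $\deg_W(\phi_h)=2\sum_k i_k(h)$, so the dimension constraint $\sum_i\big(a_i + \tfrac12\deg_W(\phi_{h_i})\big)=n$ is insensitive to whether an insertion is narrow or broad. For instance $h=(0,\tfrac15,\tfrac15,\tfrac15,\tfrac25)\in G$ is broad with $\deg_W(\phi_h)=2$, and a four-point correlator with insertions of total extended degree $8$ and all $a_i=0$ satisfies the dimension constraint, yet the extended invariant is defined to be zero. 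So the vanishing is not for dimensional reasons and your approach cannot close this case.

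The paper's argument is genuinely different here. Assuming without loss of generality $m_{k1}=1$, it uses the exact sequence
\[
0\to \widetilde\LL_k \to \widetilde\LL_k(D_1)\to \widetilde\LL_k(D_1)|_{D_1}\to 0,
\]
shows that both $\pi_*\widetilde\LL_k$ and $\pi_*\big(\widetilde\LL_k(D_1)\big)$ vanish by the same vertex-by-vertex degree argument (with a small modification on the component carrying $p_1$), and that $R^1\pi_*\big(\widetilde\LL_k(D_1)|_{D_1}\big)=0$ since $D_1$ is fiberwise zero-dimensional. The long exact sequence then collapses to
\[
0 \to \pi_*\big(\widetilde\LL_k(D_1)|_{D_1}\big)\to R^1\pi_*\widetilde\LL_k\to R^1\pi_*\big(\widetilde\LL_k(D_1)\big)\to 0,
\]
so $c_{top}(R^1\pi_*\widetilde\LL_k)$ factors as $c_{top}\big(\pi_*(\widetilde\LL_k(D_1)|_{D_1})\big)\cdot c_{top}\big(R^1\pi_*(\widetilde\LL_k(D_1))\big)$. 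The first factor is the first Chern class of a line bundle whose fifth power is $\omega_{\log}|_{D_1}$, which is trivial; a torsion line bundle has vanishing $c_1$, so $c_{top}(R^1\pi_*\widetilde\LL_k)=0$. Note also that, contrary to your phrasing, concavity does not ``fail'' in the broad case on $\widetilde{A}_4^4$ --- all multiplicities at markings are zero and the twist makes the degrees negative, which is precisely why $\pi_*$ still vanishes there; the subtlety is entirely in why the top Chern class is zero.
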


\begin{proof}
Comparing $A_4$ and $\widetilde{A}_4$, we see that if $m_{ki}\in \set{\frac15, \frac 25, \frac35 , \frac45}$ for all $k, i$, we can identify $\widetilde{A}_4^4(\bv{m})_{g,n}$ with $A_4^4(\bv{m})_{g,n}$  via Lemma~\ref{l:linebndles}.  Under this identification $R^j\pi_*(\widetilde{\LL}_k) = R^j\pi_*(\LL_k)$.  This gives \eqref{e:calc12} in the case $\phi_{h_1}, \ldots, \phi_{h_n} \in \sH^{nar}_{W,G}$.

To finish the proof we must consider the case where $h_i \in G \setminus \hat{S}$ for some $i$.  In this case $(I(\bv{h}))_{ki} = 5$ for some $k$.  Thus it suffices to prove that if $m_{ki} = 5$ for some $i$ and $k$, then
$\pi_*\big(\bigoplus_{k=1}^5\widetilde{\LL}_k\big) = 0$
and
$c_{top}\Big(R^1\pi_*\big(\bigoplus_{k=1}^5\widetilde{\LL}_k\big)\Big)=0.$

Without loss of generality assume $m_{k1}=5$. Consider the integer divisor $D_1$ on $\widetilde{A}_4^4(\bv{m})_{0,n}$ corresponding to the first marked point. We have the following exact sequence
\[
0\to \widetilde\LL_k \to \widetilde\LL_k(D_1)\to \widetilde\LL_k(D_1)|_{D_1}\to 0.
\]
This gives rise to the long exact sequence
\begin{align*}
0 &\to \pi_*(\widetilde\LL_k) \to \pi_*(\widetilde\LL_k(D_1)) \to \pi_*(\widetilde\LL_k(D_1)|_{D_1})\\
 &\to R^1\pi_*(\widetilde\LL_k) \to R^1\pi_*(\widetilde\LL_k(D_1))\to R^1\pi_*(\widetilde\LL_k(D_1)|_{D_1})\to 0.
\end{align*}

The first two terms are 0. Indeed, consider first $\pi_*(\widetilde{\LL}_k)$. The fiber over the point $(\cC,p_1,\dots,p_n,\widetilde \cL_1,\dots, \widetilde\cL_5)$ is equal to $H^0(\cC,\widetilde{\cL}_k)$. 
As in Proposition~\ref{p:concave} we will show that $\widetilde \cL_k$ has no global sections by computing its degree on each irreducible component of $\cC$.  If $\cC$ is irreducible, $\deg(\widetilde\cL_k) < 0$ and the claim follows.  If not, let $\Gamma$ denote the dual graph to $\cC$, let $v$ be a vertex corresponding to the irreducible component $\cC_v$ and let $P_v$ be the set of special points of $\cC_v$.  As in Proposition~\ref{p:concave}, we obtain the inequality $\deg( \widetilde\cL_k|_{\cC_v} )<\#\text{nodes}(\cC_v) -1$.  Again one can proceed vertex by vertex starting from outer vertices of $\Gamma$ and show that the restriction of $\widetilde\cL_k$ to each component has no nonzero global sections.

We can do the same with $\pi_*(\widetilde{\LL}_k(D_1))$, with one alteration. If $\cC$ is reducible, and $v '$ corresponds to the irreducible component carrying the first marked point, then $\deg \widetilde \cL_k(D_1)|_{\cC_{v '}}<\#\text{nodes}(\cC_{ v '})$.  But any section of $\widetilde \cL_k(D_1)$ must still vanish on all other components of $C$, and by degree considerations it must therefore vanish on $\cC_{v '}$.

$D_1$ is zero--dimensional on each fiber, so $R^1\pi_*(\widetilde\LL_k(D_1)|_{D_1})$ also vanishes.  The above long exact sequence above becomes
\[
0 \to \pi_*\widetilde\LL_k(D_1)|_{D_1}\to R^1\pi_*\widetilde\LL_k\to R^1\pi_*\widetilde\LL_k(D_1)\to 0. 
\]
Therefore 
\[
c_{top}(R^1\pi_*\widetilde\LL_k)=c_{top}(\pi_*\widetilde\LL_k(D_1)|_{D_1})\cdot c_{top}(R^1\pi_*\widetilde\LL_k(D_1)). 
\]
But $c_{top}(\pi_*\widetilde\LL_k(D_1)|_{D_1})=0$, as $\widetilde{\LL}_k(D_1)|_{D_1} \cong \LL_k|_{D_1}$ is a fifth root of $\omega_{\log}|_{D_1}$ which is trivial. Thus $c_{top}(R^1\pi_*\widetilde\LL_k)=0$ as well. 
\end{proof}

We may define a $\CC^*$--equivariant generalization of the above theory.  This will allow us to compute invariants which, in the non--equivariant limit coincide with the genus zero FJRW invariants above.
Given a point $(\cC,p_1,\dots,p_n,\widetilde \cL_1,\dots, \widetilde\cL_5)$ in $(\widetilde{A}_4^4)_{g,n}$, let $\CC^*$ act on the total space of $\bigoplus_{k=1}^5 \widetilde{\cL}_k$ by multiplication of the fiber.  This induces an action on $(\widetilde{A}_4^4)_{g,n}$.  

Set $R=H^*_{\CC^*}(pt,\CC)[[s_0,s_1, \dots]]$, the ring of power series in the variables $s_0,s_1,\dots$ with coefficients in the equivariant cohomology of a point,  $H^*_{\CC^*}(pt, \CC)=\CC[\lambda]$. 
Define a multiplicative characteristic class $\bc$ taking values in $R$, by
\[
\bc(E):=\exp\left(\sum_k s_k \ch_k(E) \right)
\]
for $E\in K^*((\widetilde A_4^4)_{g,n})$. 

Define the twisted state space
\[
\sH^{tw}:=\sH_{W,G}^{ext}\otimes R \cong \bigoplus_{h\in G} R\cdot \phi_h
\] 
and extend the pairing by 
\begin{equation}\label{e:pairing}
\br{\phi_{h_1},\phi_{h_2}}:=\begin{cases}
    \displaystyle\prod_{\{k\,| i_k(h_1)=4/5\}}\exp(-s_0) & \text{ if } h_1=(h_2)^{-1}\\
    0 & \text{otherwise}.
    \end{cases}
\end{equation}
In this definition, the empty product is understood to be 1. 

We define the symplectic vector space $\sV^{tw}:=\sH_{tw}((z^{-1}))$, with the symplectic pairing defined as in equation~\eqref{e:residue}.

We may also define \emph{twisted correlators} as follows.  Given $\phi_{h_1}, \ldots, \phi_{h_n}$ basis elements in $\sH^{tw}$, define the invariant
\[
\br{\psi^{a_1}\phi_{h_1},\dots,\psi^{a_n} \phi_{h_n}}_{g,n}^{tw} :=625 \int_{\tilde{A}^4_4(I(\bv{h}))_{g,n}} \prod \psi_i^{a_i}\cup \bv{c}\Big(R\pi_*\big(\bigoplus_{k=1}^5\widetilde{\LL_k}\big)\Big).
\]
taking values in $R$.  We can organize these invariants into generating functions $\cF_g^{tw}$ and $\cD^{tw}$ as in Section~\ref{formal}.

Specializing to particular values of $s_d$ yield different twisted invariants.  In particular, if $s_d=0$ for all $d$, we get what is referred to as the \emph{untwisted theory}. We will denote the generating functions of the untwisted theory by $\cF_g^{un}$ and $\cD^{un}$.

On the other hand, setting 
\begin{equation}\label{e:sd}
s_d=\begin{cases}
-\ln \lambda & \text{if } d=0\\
\frac{(d-1)!}{\lambda^d} & \text{otherwise}
\end{cases}
\end{equation} 
we obtain the (extended) FJRW--theory invariants defined above.  To see this first consider the following lemma.

\begin{lemma}\cite[Lemma 4.1.2]{ChR}\label{l:ctop}
With $s_d$ defined as in \eqref{e:sd}, the multiplicative class $\bc(-V)=e_{\CC^*}(V^\vee)$. In particular, the non--equivariant limit yields the top chern class of $V^\vee$.
\end{lemma}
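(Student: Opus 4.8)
The plan is to reduce the statement to a computation on Chern roots via the splitting principle, exploiting that $\bc$ is multiplicative while the Chern character $\ch$ is additive. Since $\ch_k(-V) = -\ch_k(V)$ and $\bc(E) = \exp\left(\sum_k s_k \ch_k(E)\right)$, we have $\bc(-V) = \bc(V)^{-1}$, and more generally $\bc$ converts direct sums into products. Thus it suffices to verify the identity for a single line bundle $L$; the general case, including virtual classes such as $R\pi_*\big(\bigoplus_k \widetilde{\LL}_k\big)$, then follows by multiplicativity together with additivity of $\ch$.

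First I would treat the line-bundle case. Writing $x = c_1(L)$, so that $\ch_k(L) = x^k/k!$, I substitute the values of $s_d$ from \eqref{e:sd} into the defining exponent:
\[
\sum_{k \geq 0} s_k \ch_k(L) = -\ln\lambda + \sum_{k \geq 1} \frac{(k-1)!}{k!\,\lambda^k} x^k = -\ln\lambda + \sum_{k \geq 1} \frac{1}{k}\left(\frac{x}{\lambda}\right)^k.
\]
The key observation is that the tail is exactly the logarithmic series $-\ln(1 - x/\lambda)$, so the right-hand side collapses to $-\ln\lambda - \ln(1 - x/\lambda) = -\ln(\lambda - x)$. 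Exponentiating gives $\bc(L) = (\lambda - x)^{-1}$, and hence $\bc(-L) = \lambda - x$.

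Next I would identify $\lambda - x$ with the equivariant Euler class. Under the chosen $\CC^*$-action scaling the fibers, the equivariant first Chern class of $L^\vee$ is $\lambda - x$, so $\bc(-L) = \lambda - x = e_{\CC^*}(L^\vee)$. Applying the splitting principle to write $V$ (or its numerator and denominator, if virtual) in Chern roots $x_i$ and multiplying, I obtain $\bc(-V) = \prod_i(\lambda - x_i) = e_{\CC^*}(V^\vee)$. The non-equivariant limit is then immediate: setting $\lambda = 0$ sends $\prod_i(\lambda - x_i)$ to $\prod_i(-x_i) = c_{top}(V^\vee)$, since $V^\vee$ has Chern roots $-x_i$.

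The main obstacle is bookkeeping rather than conceptual: one must fix the sign and weight conventions for the $\CC^*$-action consistently, so that $e_{\CC^*}(L^\vee)$ comes out as $\lambda - x$ rather than $-\lambda - x$, and one must handle the $\ch_0$ contribution $s_0 = -\ln\lambda$ with care, since it is precisely this term that supplies the factor of $\lambda$ needed to assemble $\ln(\lambda - x)$ from the formal logarithm series. A secondary point is to justify the manipulation of $\sum_{k \geq 1}\frac{1}{k}(x/\lambda)^k$ as an identity in $R$, which is legitimate because $x$ is nilpotent and the series therefore terminates.
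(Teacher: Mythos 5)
Your proof is correct and takes essentially the same approach as the paper's: reduce to a line bundle via the splitting principle and multiplicativity, substitute the values of $s_d$, recognize the logarithm series, and exponentiate to obtain $\lambda + c_1(\cL^\vee) = e_{\CC^*}(\cL^\vee)$. The only cosmetic difference is that you compute $\bc(\cL)$ and then invert, whereas the paper computes $\bc(-\cL)$ directly after rewriting the Chern characters in terms of $\cL^\vee$.
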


\begin{proof}
We can check this on a line bundle, and then apply the splitting principle. Consider a line bundle $\cL$. Then we have 
\begin{align*}
\exp\left(\sum_{d\geq 0} s_d\ch_d(-\cL)\right) &= \exp\left(\ln \lambda\ch_0(\cL) -\sum_{d>0} s_d\ch_d(\cL)\right)\\
 &=\exp\left(\ln \lambda\ch_0(\cL^\vee) -\sum_{d>0} (-1)^ds_d\ch_d(\cL^\vee)\right)\\
 &=\exp\left(\ln \lambda\ch_0(\cL^\vee) +\sum_{d>0} (-1)^{d-1}\tfrac{(d-1)!}{\lambda^d}\ch_d(\cL^\vee)\right)\\
 &=\lambda\exp\left(\sum_{d>0} (-1)^{d-1}\tfrac{c_1(\cL^\vee)^d}{d\lambda^d}\right)\\
 &=\lambda\exp\left(\ln(1+\frac{c_1(\cL^\vee)}{\lambda}\right)\\
 &=\lambda+c_1(\cL^\vee)
\end{align*}
\end{proof}

By Proposition~\ref{p:concave}, $\pi_*(\bigoplus \widetilde{\LL}_k)=0$ and $\bc(R\pi_*(\widetilde{\LL}_k))=\bc(-R^1\pi_*(\widetilde{\LL}_k))$. Setting $s_d$ as in \eqref{e:sd} therefore yields 
\[
\bc\Big(R\pi_*\Big(\bigoplus_{k=1}^5\widetilde{\LL}_k\Big)\Big) = e_{\CC^*}\Big(R^1\pi_*\Big(\bigoplus_{k=1}^5\widetilde{\LL}_k\Big)^\vee\Big).
\] 
Applying Proposition~\ref{p:chern} we obtain the following
\begin{corollary}\label{c:nonequivlimit}
After specializing $s_d$ to the values in \eqref{e:sd}, 
\[
\lim_{\lambda\to 0} \cF_0^{tw}=\cF_0^{(W,G)}.
\]
\end{corollary}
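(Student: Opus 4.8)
The plan is to prove the equality correlator-by-correlator and then reassemble the two generating functions. Since both $\cF_0^{tw}$ and $\cF_0^{(W,G)}$ are defined by the same formal expression $\sum_n \tfrac{1}{n!}\br{\bt(\psi),\dots,\bt(\psi)}_{0,n}$ over their respective invariants, and since expanding $\bt(\psi)=\sum_{h,k}t^h_k\phi_h\psi^k$ breaks each bracket multilinearly into a sum of $\br{\psi^{a_1}\phi_{h_1},\dots,\psi^{a_n}\phi_{h_n}}$, it suffices to show that for every $n$, every choice of insertions $\phi_{h_1},\dots,\phi_{h_n}$, and every tuple of powers $a_1,\dots,a_n$, the non-equivariant limit of the twisted correlator equals the extended FJRW correlator. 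The limit then passes through the formal sum because it is taken coefficient-by-coefficient in the variables $t^h_k$.

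First I would invoke the identity displayed immediately before the statement: after the specialization \eqref{e:sd}, the vanishing $\pi_*(\bigoplus_{k=1}^5\widetilde\LL_k)=0$ (Proposition~\ref{p:concave}, with the broad-insertion case covered in Proposition~\ref{p:chern}) gives $R\pi_*(\bigoplus_k\widetilde\LL_k)=-R^1\pi_*(\bigoplus_k\widetilde\LL_k)$ in $K$-theory, and Lemma~\ref{l:ctop} then yields
\[
\bc\Big(R\pi_*\big(\bigoplus_{k=1}^5\widetilde\LL_k\big)\Big)=e_{\CC^*}\Big(R^1\pi_*\big(\bigoplus_{k=1}^5\widetilde\LL_k\big)^\vee\Big).
\]
Substituting this into the definition of the twisted correlator rewrites it as the equivariant integral $625\int_{\widetilde A_4^4(I(\bv{h}))_{0,n}}\prod_i\psi_i^{a_i}\cup e_{\CC^*}(R^1\pi_*(\bigoplus_k\widetilde\LL_k)^\vee)$, taking values in $\CC[\lambda]$.

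The crucial point — and the step that carries the real weight — is that this equivariant integral is a \emph{polynomial} in $\lambda$ rather than a rational function, so that the limit $\lambda\to 0$ exists and is computed by simply setting $\lambda=0$. This is exactly what genus-zero concavity buys: by Proposition~\ref{p:chern}, $R^1\pi_*(\bigoplus_k\widetilde\LL_k)$ is locally free of finite rank, so its equivariant Euler class factors as $\prod_j(\lambda+x_j)$ over the Chern roots $x_j$ of the dual, a genuine polynomial in $\lambda$. Had one only retained $R\pi_*$ as a virtual $K$-class — as happens in higher genus, where $\pi_*\neq 0$ — the equivariant Euler class could acquire $\lambda$ in the denominator and the limit would fail to exist; this is precisely why the corollary is restricted to genus zero. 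Granting this, the last sentence of Lemma~\ref{l:ctop} gives $\lim_{\lambda\to 0}e_{\CC^*}(R^1\pi_*(\bigoplus_k\widetilde\LL_k)^\vee)=c_{top}(R^1\pi_*(\bigoplus_k\widetilde\LL_k)^\vee)$.

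Finally I would match the limit against the FJRW side. Passing to $\lambda\to 0$ produces $625\int\prod_i\psi_i^{a_i}\cup c_{top}(R^1\pi_*(\bigoplus_k\widetilde\LL_k)^\vee)$, which is exactly the right-hand side of \eqref{e:calc12} in Proposition~\ref{p:chern}, hence equal to $\br{\psi^{a_1}\phi_{h_1},\dots,\psi^{a_n}\phi_{h_n}}^{(W,G)}_{0,n}$. This simultaneously reproduces the convention that the extended invariants vanish whenever some $h_i\in G\setminus\hat S$: in that case Proposition~\ref{p:chern} shows the relevant top Chern class is zero, so both sides vanish. Summing over $n$ and reassembling then gives $\lim_{\lambda\to 0}\cF_0^{tw}=\cF_0^{(W,G)}$.
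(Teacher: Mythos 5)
Your proposal is correct and follows essentially the same route as the paper: concavity gives $R\pi_*=-R^1\pi_*$, Lemma~\ref{l:ctop} converts the twisted class into the equivariant Euler class of $R^1\pi_*\big(\bigoplus_k\widetilde{\LL}_k\big)^\vee$, and Proposition~\ref{p:chern} identifies the $\lambda\to 0$ limit of each correlator with the corresponding FJRW correlator via \eqref{e:calc12}. The only difference is that you spell out two points the paper leaves implicit --- the correlator-by-correlator reduction and the polynomiality in $\lambda$ (local freeness of $R^1\pi_*$) that guarantees the limit exists --- both of which are correct and strengthen the argument rather than alter it.
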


We will compute twisted invariants by relating them to untwisted invariants, which we can compute directly.
As before it is easy to check that $\cF_0^{un}$ satisfies SE, DE, and TRR, (where $\phi_\jw$ plays the role of the unit in this theory, as in Remark~\ref{r:identity element}) so it defines an overruled Lagrangian cone $\sL^{un}\subset \sV^{un}$, satisfying the same geometric properties as described in Section~\ref{formal}. We obtain the untwisted $J$--function
\[
J^{un}(\bv{t},-z)= -z\phi_\jw + \bt + \sum_{n \geq 0}\sum_{\substack{ a \geq 0 \\ h \in G}} \frac{1}{n!(-z)^{a+1}}\br{ \bv{t}, \dots \bv{t},\psi^{a}\phi_h }^{un}_{0,n+1}\phi^{h}.
\]
We may similarly define $J^{tw}(\bt, z)$ and $\sL^{tw}$ in terms of $\sF^{tw}_0$, but it is not obvious $\sL^{tw}$ is a Lagrangian cone. Rather than proving this directly, we will use the methods of quantization. Let $B_d(x)$ denote the $d$th Bernoulli polynomial, and recall $i_k(h) = \langle \varTheta_{k}(h)-\tfrac15 \rangle$.
\begin{prop}\label{p:sympl}
The symplectic transformation
\[
\Delta=\bigoplus_{h\in G} \prod_{k=1}^5 \exp\left(\sum_{d\geq 0} s_d\frac{B_{d+1}\big(i_k(h)+\tfrac{1}{5}\big)}{(d+1)!}z^d\right) 
\] 
satisfies $\sL^{tw}=\Delta(\sL^{un})$.
\end{prop}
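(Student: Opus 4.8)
The plan is to adapt the quantized Riemann--Roch strategy of Coates and Givental, in the form developed for FJRW theory in \cite{ChR}. The essential point is that both $\sL^{tw}$ and $\Delta$ depend on the parameters $s_0, s_1, \dots$, with $\sL^{tw}|_{s_d = 0} = \sL^{un}$ and $\Delta|_{s_d = 0} = \id$. Since $\Delta$ is a symplectic isomorphism carrying overruled cones to overruled cones, it suffices to show that, as families in the $s_d$, the cones $\sL^{tw}$ and $\Delta(\sL^{un})$ satisfy the same first--order differential equations and agree at $s_d = 0$. I would therefore differentiate along each flow $\partial/\partial s_d$ and match the infinitesimal motion of the twisted cone against the infinitesimal action of $\log \Delta$. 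At the level of potentials this is the statement that $\widehat{(\partial_{s_d}\log\Delta)}$ generates the $s_d$--variation of $\cD^{tw}$; restricting to genus zero yields the cone statement.

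Before running the flow I would check that $\Delta$ is genuinely symplectic, i.e. that it intertwines $\Omega_{un}$ and the twisted form $\Omega_{tw}$ built from the pairing \eqref{e:pairing}. This reduces to evaluating $\Delta(-z)$ against $\Delta(z)$ on dual sectors $\phi_h$ and $\phi_{h^{-1}}$. Using $\varTheta_k(h^{-1}) = \langle -\varTheta_k(h)\rangle$, the reflection identity $B_{d+1}(1-x) = (-1)^{d+1}B_{d+1}(x)$, and the vanishing of $B_{d+1}(1)$ for even $d\geq 2$, one finds that for indices $k$ with $\varTheta_k(h)\neq 0$ the two factors cancel completely, while for $k$ with $\varTheta_k(h)=0$ all $z$--dependent terms cancel and only a scalar $\exp(s_0)$ survives. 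This surviving scalar is precisely compensated by the factor $\exp(-s_0)$ in \eqref{e:pairing}, so $\Delta$ preserves the symplectic structure.

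The heart of the argument is the flow computation. Differentiating a twisted correlator in $s_d$ inserts the class $\ch_d\big(R\pi_*\bigoplus_{k=1}^5\widetilde{\LL}_k\big)$ into the integral defining it. Applying Grothendieck--Riemann--Roch to the universal curve $\pi$ (Chiodo's formula for Chern characters of pushforwards of fifth roots) expands this into three kinds of tautological contributions: marking terms $\sum_i \tfrac{B_{d+1}(i_k(h_i)+\frac15)}{(d+1)!}\psi_i^d$ at the marked points, an interior term proportional to a $\kappa$--class, and boundary terms supported on the nodal divisors. The marking terms are exactly $\partial_{s_d}\log\Delta$ acting at the distinguished insertion once $\psi$ is identified with $z$; this is the source of the sector--by--sector diagonal form of $\Delta$ and of the argument $i_k(h)+\frac15$ of the Bernoulli polynomials. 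The interior $\kappa$--term is constant on each moduli component and feeds only into an overall scaling, consistent with the string and dilaton equations and with the $s_0$--normalization of the pairing. The boundary terms must be shown to assemble, via the splitting of the virtual class at nodes, into the second--order (propagator) part of the quantized operator $\widehat{\log\Delta}$ --- exactly the part guaranteeing that the cone is moved by the symplectic operator $\Delta$ and not merely by its diagonal symbol.

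I expect the boundary (node) contributions to be the main obstacle. One must match the factor $B_{d+1}$ evaluated at the two complementary node multiplicities against the ``propagator'' coming from the $\partial^2$--part of $\widehat{\log\Delta}$, and verify that summing the resulting boundary insertions against the genus--zero structure of $\cF_0^{un}$ (via TRR together with SE and DE) reproduces precisely the infinitesimal deformation of $\Delta(\sL^{un})$. Once this matching is in place, integrating the flow from $s_d=0$ gives $\sL^{tw}=\Delta(\sL^{un})$; since $\Delta$ is symplectic and $\sL^{un}$ is an overruled Lagrangian cone, this simultaneously establishes that $\sL^{tw}$ is one as well.
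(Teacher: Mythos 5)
Your proposal follows essentially the same route as the paper's proof: check that $\Delta$ is symplectic via the Bernoulli reflection identity, establish $\hat\Delta\cD^{un}=\cD^{tw}$ by showing both sides satisfy the same first--order flows in the variables $s_d$ with the same initial condition at $\bv{s}=0$ (the validity of the flow equation for $\cD^{tw}$ being exactly the Grothendieck--Riemann--Roch/Chiodo computation, which the paper cites from \cite{ChZ} and \cite{ChR}), and then pass to the genus--zero (semiclassical) limit to obtain the cone statement. Your accounting of the $\exp(-s_0)$ factors on broad indices in the pairing \eqref{e:pairing} is in fact more careful than the paper's one--line assertion of symplecticity, but the argument is the same.
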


\begin{proof}Note first that the identity 
$B_d(1-x)=(-1)^{d}B_d(x)$ implies $\Delta$ is symplectic. 

The proof is the same as the proof in \cite{ChR} and \cite{ChZ}, with some slight modification. We give a sketch here.  The strategy is to first relate $\cD^{un}$ to $\cD^{tw}$ via the quantization $\hat\Delta$.  The desired statement then follows by taking the semiclassical limit (see \cite{CPS}).
 
We will prove that
\begin{equation}\label{e:quant}
\hat\Delta\cD^{un}=\cD^{tw}
\end{equation} 
by viewing both sides as functions with respect to the variables $s_d$ and showing they are both solutions to the same system of differential equations.
First notice that both sides of \eqref{e:quant} have the same initial condition, i.e. when $\bv{s}=0$ they are equal. We will show that $\cD^{tw}$ and $\cD^{un}$ both satisfy
\begin{equation}\label{e:diffeq}
\frac{\partial \Phi}{\partial s_d}=\sum_{k=0}^5 P_d^{(k)}\Phi
\end{equation}
where 
\begin{align*}
P_d^{(k)}=\frac{B_{d+1}(\tfrac{1}{5})}{(d+1)!} \frac{\partial}{\partial t^\jw_{d+1}}-&\sum_{\stackrel{a \geq 0}{h \in G}} \frac{B_{d+1}(i_k(h)+\tfrac{1}{5})}{(d+1)!}t^h_a\frac{\partial}{\partial t^h_{a+d}}\\ +&\frac{\hbar}{2}\sum_{\stackrel{a+a'=d-1}{h, h' \in G}} (-1)^{a'}\eta^{h,h'}\frac{B_{d+1}(i_k(h)+\tfrac{1}{5})}{(d+1)!} \frac{\partial^2}{\partial t^h_a \partial t^{h'}_{a'}},
\end{align*}
and $\eta^{h,h'}$ denotes the inverse pairing. 

It is clear that $\hat\Delta\cD^{un}$ satisfies the equation, it remains to show that $\cD^{tw}$ does as well. 
Substituting $\cD^{tw}$ for $\Phi$ in \eqref{e:diffeq} and taking the derivative with respect to $s_d$, we see that the equation reduces to
\begin{align*}
\sum_{n\geq 0} \frac{1}{n!}\Big\langle\bt(\psi), \dots, \bt(\psi); &\ch_d(R\pi_*(\widetilde{\LL}_k))\cdot \bv{c}(R\pi_*(\bigoplus_l\widetilde{\LL}_l))\Big\rangle_{0,n} \\
&=P_d^{(k)}\cF_g^{tw}+\frac{\hbar}{2}\sum_{\stackrel{a+a'=d-1}{h, h' \in G}}(-1)^{a'}\eta^{h,h'}\frac{B_{d+1}(i_k(h)+\tfrac{1}{5})}{(d+1)!}\frac{\partial \cF_g^{tw}}{\partial t^h_a}\frac{\partial \cF_g^{tw}}{\partial t^{h'}_{a'}}
\end{align*}

This equation was proven in \cite{ChZ}, and generalized to the extended state space in \cite{ChR}. It is proved using Grothendieck--Riemann--Roch to give an expression for $\ch_d(R\pi_*(\widetilde{\LL}_k))$. 
\end{proof}

It will be useful to separate the summands of $J^{un}(\bt, z)$ in terms of powers of $t^h$.
Given a function $\bv{n}: G \to \ZZ_{\geq 0}$, let 
$J^{un}_{\bv{n}}(\bt, z)$ denote the $\prod_{h \in G}(t^h)^{\bv{n}(h)}$--summand of $J^{un}(\bt,z)$.
Proposition~\ref{p:1.3} plus a straightforward $\psi$--class calculation shows that the correlator $\br{\phi_{h_1},\dots, \phi_{h_{n}},\psi^l\phi_{h}}_{0,n+1}^{un}=1$ 
when $i_k(h)= \langle \tfrac35-\sum_m i_k(h_m)\rangle$ and $l=n-2$. It is zero otherwise.  Furthermore, $i_k(h^{-1}) = \langle \tfrac35 - i_k(h)\rangle$. We arrive at the following pleasant formula
\[
J^{un}_{\bv{n}}(\bt,z) = \frac{\prod_{h \in G}(t^h)^{\bv{n}(h)} }{z^{|\bv{n}|-1}\prod_{h \in G}\bv{n}(h)!}\phi_{h_{\bv{n}}},
\]
with $h_{\bv{n}}$ defined by $i_k(h_\bv{n}) = \langle \sum_{h\in G} \bv{n}(h)i_k(h) \rangle$.

We conclude that
\begin{equation}\label{e:jun}
J^{un}(\bt,z)=\sum_{\bv{n}} \frac{\prod_{h \in G}(t^h)^{\bv{n}(h)} }{z^{|\bv{n}|-1}\prod_{h \in G}\bv{n}(h)!}\phi_{h_{\bv{n}}}.
\end{equation}
Proposition~\ref{p:sympl} allows us to describe $\sL^{tw}$ in terms of $\sL^{un}$.  Combining this with 
Equation~\eqref{e:jun}, we will obtain an explicit description of a slice of $\sL^{tw}$.  This will then determine $J^{tw}(\bt, z)$.

Define $D_h=t^h\tfrac{\partial}{\partial t_0^h}$, and put $D^k=\sum_{h \in G} i_k(h)D_h$. Notice that $D_h J^{un}_{\bv{n}}(\bt,z)= \bv{n}(h)J^{un}_{\bv{n}}(\bt,z)$. 
Consider the following functions:
\begin{align*}
\bv{s}(x)&=\sum_{d\geq 0} s_d\frac{x^d}{d!}\\
G_y(x,z)&=\sum_{l,m\geq 0} s_{l+m-1} \frac{B_m(y)}{m!}\frac{x^l}{l!}z^{m-1}.
\end{align*}
These functions satisfy the following:
\begin{align*}
G_y(x,z)&=G_0(x+yz,z)\\
G_0(x+z,z) &=G_0(x,z)+\bv{s}(x)
\end{align*}

\begin{prop}
The slice defined by
\[
J^{\mathbf{s}}(\bt, z)=\prod_{k=1}^5\left(\exp(-G_{1/5}(zD^k,z)\right)J^{un}(\bt,z)
\]
lies on $ \sL^{un}$. 
\end{prop}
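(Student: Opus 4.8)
The plan is to place each member of the slice inside a \emph{single} ruling of the overruled cone $\sL^{un}$. Concretely, I would produce a change of variables $\btau = \btau(\bt)$ and coefficients $c_h(z) \in \CC[z]$ such that
\[
J^{\mathbf{s}}(\bt, -z) = z\sum_{h \in G} c_h(z)\, \frac{\partial}{\partial t^h} J^{un}(\btau, -z),
\]
which by \eqref{e:Jgens} means $J^{\mathbf{s}}(\bt,-z) \in zL_{J^{un}(\btau,-z)} \subset \sL^{un}$, since $J^{un}(\btau,-z)$ lies on $\sL^{un}$ and $\sL^{un} = \bigcup_{\btau} zL_{J^{un}(\btau,-z)}$ by \eqref{e:generic}.

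The first step is to diagonalize the operators $D^k$. Using the explicit formula \eqref{e:jun} and the identity $D_h J^{un}_{\bv{n}} = \bv{n}(h) J^{un}_{\bv{n}}$, each summand $J^{un}_{\bv{n}}$ is a simultaneous eigenvector:
\[
D^k J^{un}_{\bv{n}} = \mu_k(\bv{n})\, J^{un}_{\bv{n}}, \qquad \mu_k(\bv{n}) := \sum_{h \in G} \bv{n}(h)\, i_k(h),
\]
and the five operators $D^1,\dots,D^5$ mutually commute. Hence the infinite--order operator acts summand by summand as a scalar series in $z$:
\[
J^{\mathbf{s}}(\bt, z) = \sum_{\bv{n}} \left(\prod_{k=1}^5 \exp\big(-G_{1/5}(z\mu_k(\bv{n}), z)\big)\right) J^{un}_{\bv{n}}(\bt, z).
\]
Next I would exploit the two functional equations for $G$. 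Writing $\mu_k(\bv{n}) = i_k(h_{\bv{n}}) + N_k(\bv{n})$ with $N_k(\bv{n}) = \lfloor \mu_k(\bv{n})\rfloor \in \ZZ_{\geq 0}$ (recall $i_k(h_{\bv{n}}) = \langle \mu_k(\bv{n})\rangle$), the identity $G_{1/5}(x,z) = G_0(x + z/5, z)$ followed by $N_k(\bv{n})$ applications of $G_0(x+z,z) = G_0(x,z) + \bv{s}(x)$ yields
\[
\exp\big(-G_{1/5}(z\mu_k(\bv{n}), z)\big) = \exp\big(-G_{1/5}(z\, i_k(h_{\bv{n}}), z)\big)\prod_{j=0}^{N_k(\bv{n})-1} \exp\big(-\bv{s}(z(i_k(h_{\bv{n}}) + \tfrac15 + j))\big).
\]
The leading factor depends only on the output sector $h_{\bv{n}}$, while each factor $\bv{s}(zc) = \sum_d s_d c^d z^d / d!$ is regular at $z = 0$. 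Thus the difference equation is precisely the device that converts the integer shifts $N_k(\bv{n})$ into $z$--regular factors.

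The heart of the argument is to read these regular factors as motion along the ruling and the remainder as the change of variables. Applying $zD^k$ to $J^{un}(\bt,-z)$ gives $\sum_h i_k(h) t^h\big(z\,\partial_{t^h} J^{un}(\bt,-z)\big)$, which lies in the tangent ruling $zL_{J^{un}(\bt,-z)}$ by \eqref{e:Jgens}; and because the overruled property \eqref{e:overruled} forces $z\,\partial_{t^h}\partial_{t^i} J^{un} \in L_{J^{un}(\bt,-z)}$ (the genus--zero TRR), the operator $zD^k$ maps the tangent space $L_{J^{un}(\bt,-z)}$ into itself. Expanding $G_{1/5}(zD^k,z)$ in the commuting $zD^k$ shows that the terms with $m \geq 1$ are $z$--regular in this sense, whereas the $m=0$ terms produce a $z^{-1}(zD^k)^{\bullet}$ polar contribution. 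The final step is therefore a Birkhoff factorization of $\prod_k \exp(-G_{1/5}(zD^k,z))$ into its $z$--regular part, which keeps the point inside $L_{J^{un}(\btau,-z)}$ (and, after the overall factor of $z$, inside $zL_{J^{un}(\btau,-z)}$), and its polar part, which encodes the change of variables $\btau(\bt)$; the difference equation guarantees that this split leaves polynomial ruling coefficients $c_h(z) \in \CC[z]$ rather than merely meromorphic ones.

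I expect this last step --- controlling the infinite--order operator and carrying out the Birkhoff factorization that exhibits the polar part as a genuine reparametrization $\btau(\bt)$ with polynomial regular coefficients --- to be the main obstacle. This is exactly the mechanism employed for the Fermat quintic in \cite{ChR}, so once the eigenvalue decomposition above and the two functional equations for $G$ are in place, the argument should transfer with only the bookkeeping changes forced by the five--fold product over $k$ and the extended state space.
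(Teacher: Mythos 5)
Your setup is fine as far as it goes --- the eigenvalue decomposition of $J^{un}$ under the commuting operators $D^k$, the two functional equations for $G$, and the observation that $zD^k$ carries (families valued in) $L_{J^{un}(\bt,-z)}$ into itself are all correct, and indeed the paper uses the first two of these \emph{after} this proposition, to compute $\Delta(J^{\bv{s}})$ and extract the modification factors $M_{\bv{n}}$. But the step you defer to the end --- the ``Birkhoff factorization'' of $\prod_k \exp(-G_{1/5}(zD^k,z))$ --- is not a bookkeeping matter; it is the entire content of the proposition, and the mechanism you propose does not close. Here is the concrete obstruction. Write
\[
G_{1/5}(zD^k,z)=z^{-1}\sum_{l\geq 1}s_{l-1}\frac{(zD^k)^l}{l!}
\;+\;\sum_{m\geq 1,\,l\geq 0}s_{l+m-1}\frac{B_m(\tfrac15)}{m!}\frac{(zD^k)^l}{l!}z^{m-1}.
\]
On the \emph{first} application to $J^{un}$ the polar ($m=0$) part is harmless, because the string equation gives $J^{un}\in zL_{J^{un}(\bt,-z)}$, so $z^{-1}(zD^k)^lJ^{un}\in z^{-1}\!\cdot zL_{J^{un}}=L_{J^{un}}$. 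But on the \emph{second} application the input is only an $L_{J^{un}}$--valued family, and $z^{-1}(zD^k)^l$ sends it into $z^{-1}L_{J^{un}}$, which is \emph{not} contained in $L_{J^{un}}$ (tangent spaces are $\CC[z]$--modules, not $\CC[z,z^{-1}]$--modules). So from order two in $\bv{s}$ onward, no argument anchored at the fixed ruling point $\bt$ can work: the point genuinely falls out of $L_{J^{un}(\bt,-z)}$, and the ruling point $\btau$ must move with $\bv{s}$. Your proposal gives no construction of $\btau(\bt,\bv{s})$ and no reason the coefficients $c_h(z)$ stay in $\CC[z]$; this is exactly the gap you flag as ``the main obstacle,'' and it is where the proof actually lives.

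The paper (following \cite{CCIT} and \cite{ChR} --- note that \cite{ChR} does \emph{not} argue via a factorization, contrary to your last paragraph) resolves this with a different mechanism: an induction on $\bv{s}$--degree driven by differentiation in $s_d$. One characterizes membership in $\sL^{un}$ by the vanishing of explicit functionals
\[
E_l(f)=\mathbf{p}_l(f)-\sum_{n\geq 0}\sum_{h\in G}\frac{1}{n!}\br{\bt(\psi),\dots,\bt(\psi),\psi^l\phi_h}^{un}_{0,n+1}\phi^h,
\]
sets $\deg s_d=d+1$, and proves $E_l(J^{\bv{s}})=0$ degree by degree. The base case is $J^{un}\in\sL^{un}$, and the inductive step computes
$\frac{\partial}{\partial s_d}E_l(J^{\bv{s}})=d_{J^{\bv{s}}}E_l(z^{-1}P_dJ^{\bv{s}})$ with
$P_d=\sum_{k=1}^5\sum_{m=0}^{d+1}\frac{z^mB_m(\tfrac15)(zD^k)^{d+1-m}}{m!(d+1-m)!}$,
and then uses the inductive hypothesis --- that $J^{\bv{s}}$ lies on the cone up to degree $n$ --- to conclude that $z^{-1}P_dJ^{\bv{s}}$ is tangent to $\sL^{un}$ \emph{at the deformed point}, so the derivative vanishes in the next degree. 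This ``flow along tangent vector fields'' argument is precisely what absorbs the $z^{-1}$ factors that defeat your fixed--base--point factorization: tangency is always measured at $J^{\bv{s}}$ itself, not at $J^{un}(\bt,-z)$. Without this (or an equivalent) inductive device, your outline cannot be completed.
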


\begin{proof}
This lemma appears in \cite{CCIT} and \cite{ChR}.  We give it again here for the purpose of completeness. 
Any element $f \in \sV^{tw}$ can be written in the form 
\[
f=-z\phi_\jw+\sum_{l \geq 0}\bt_lz^{l} +\sum_{l \geq 0}\frac{\mathbf{p}_l(f)}{(-z)^{l+1}} 
\]
for some $\bv{p}_l(f)=\sum_{h \in G} p_{l,h}(f)\phi^h$.
If $f \in \sL^{un}$, then we know
\[
\bv{p}_l(f)=\sum_{n\geq 0}\sum_{h\in G}\frac{1}{n!} \br{\bt(\psi),\dots,\bt(\psi),\psi^l\phi_h}_{0,n+1}^{un}\phi^h
\]

The idea is to define 
\[
E_l(f)=\mathbf{p}_l(f)-\sum_{n\geq 0} \sum_{h \in G}\frac{1}{n!} \br{\bt(\psi),\dots,\bt(\psi),\psi^l\phi_h}_{0,n+1}^{un}\phi^h
\]
and show that $E_l(J^{\mathbf{s}})=0$. 

Let $\deg s_d= d+1$, and proceed by induction on the degree. Since $J^{un}(\bt,z)$ lies on $\sL^{un}$, the degree zero terms of $E_l(J^{\mathbf{s}})$ vanish. Now assuming the degree $n$ terms vanish, we will show that the degree $n+1$ terms vanish. Because of the vanishing up to degree $n$, there exists another family $\widetilde J^{\mathbf{s}}(\bt,-z)$ such that $E_l(J^{\mathbf{s}})$ and $E_l(\widetilde{J}^{\mathbf{s}})$ agree up to degree $n$. Differentiating, we obtain
\[
\frac{\partial}{\partial s_d}E_l(J_{\mathbf{s}})=d_{J^{\mathbf{s}}}E_j(z^{-1}P_dJ_{\mathbf{s}}) 
\]
where 
\[
P_d=\sum_{k=1}^5\sum_{m=0}^{d+1} \frac{1}{m!(d+1-m)!}z^mB_m(\tfrac{1}{5})(zD^k)^{d+1-m}.
\]

Up to degree $n$, the right hand side coincides with $d_{\widetilde{J}^{\mathbf{s}}}E_l(z^{-1}P_d\widetilde{J}^{\mathbf{s}})$,  which vanishes because the term in parentheses lies on $T_{d_{\widetilde{J}^{\mathbf{s}}}\cL^{un}}$. 
\end{proof}

Applying $\Delta$ to $J^{\bs}(\bt,-z)$ yields a slice of the twisted cone $\sL^{tw}$. To facilitate computation, we express $J^{\bv{s}}(\bt, -z)$ in terms of monomials in the $t^h$ variables
\[
J^{\bv{s}}(\bt,-z)=\sum_{\bv{n}} \prod_{k=1}^5\exp \left(-G_{\frac15}\bigg(\big(\sum_{h\in G}\bv{n}(h)i_k(h)
\big)z,z\bigg)\right)J^{un}_{\bv{n}}(\bt,-z) ,
\]
and express $\Delta$ as 
\[
\Delta=\prod_{k=1}^5\bigoplus_{h\in G}\exp\Big( G_{\frac15}\big(i_k(h)z, 
z\big)\Big).
\]
We can write 
\begin{align*}
\Delta\left(J^{\bv{s}}(\bt,-z)\right)&=\sum_{\bv{n}} \prod_{k=1}^5\exp\left(G_{\frac15}\bigg(\Big\langle \sum_{h\in G} \bv{n}(h)i_k(h)\Big\rangle z,
z\bigg)-G_{\frac15}\bigg( \sum_{h\in G} \bv{n}(h)i_k(h)z,
z\bigg)\right) J^{un}_{\bv{n}}(\bt,z)\\
    & =\sum_{\bv{n}} \prod_{k=1}^5\exp\left(\sum_{0\leq b<\lfloor \sum_{h\in G} \bv{n}(h)i_k(h) \rfloor} -\bs \bigg(\tfrac 15z+\Big\langle \sum_{h\in G} \bv{n}(h)i_k(h)\Big\rangle z+bz\bigg) \right)J^{un}_{\bv{n}}(\bt,z)
\end{align*}

Setting $F_{\bv{n}}=\lfloor \sum_{h\in G} \bv{n}(h)i_k(h) \rfloor$, define the modification factor
\[
M_{\bv{n}}(z)=\prod_{k=1}^5\exp\left(\sum_{0\leq b<F_{\bv{n}}} -\bs \bigg(-\tfrac 15z-\Big\langle \sum_{h\in G} \bv{n}(h)i_k(h)\Big\rangle z-bz\bigg) \right)
\]
Setting $s_d$ as in \eqref{e:sd}, we get 
\begin{align*}
M_{\bv{n}}(z)&=\prod_{\stackrel{1 \leq k \leq 5}{0\leq b<F_{\bv{n}}}} \exp\left(-s_0-\sum_{d>0} s_d\frac{\left(-\tfrac 15 z-\br{ \sum_{h\in G} \bv{n}(h)i_k(h)}z-bz\right)^d}{d!}\right)\\
    &=\prod_{\stackrel{1 \leq k \leq 5}{0\leq b<F_{\bv{n}}}} \exp\left(\ln \lambda +\sum_{d>0}(-1)^{d-1}\frac{\left(\tfrac 15 z +\br{ \sum_{h\in G} \bv{n}(h)i_k(h)}z+ bz\right)^d}{d}\right)\\
    &=\prod_{\stackrel{1 \leq k \leq 5}{0\leq b<F_{\bv{n}}}} \lambda \exp\left( \ln\bigg(1+ \frac{\tfrac 15 z+\br{ \sum_{h\in G} \bv{n}(h)i_k(h)}z+bz}{\lambda} \bigg) \right)\\
    &=\prod_{\stackrel{1 \leq k \leq 5}{0\leq b<F_{\bv{n}}}} \left(\lambda +\tfrac 15 z+\Big\langle \sum_{h\in G} \bv{n}(h)i_k(h)\Big\rangle z+bz \right)
\end{align*}

Define the $I$--function:
\begin{equation}\label{e:modif}
I^{tw}(\bt,z):=\sum_{\bv{n}} M_{\bv{n}}(z)J^{un}_{\bv{n}}(\bt,z) 
\end{equation}

By Proposition~\ref{p:sympl}, $I^{tw}\subset \sL^{tw}$.  
Furthermore, we know by Corollary~\ref{c:nonequivlimit} taking the non--equivariant limit $\lambda \mapsto 0$ recovers the FJRW invariants of $(W,G)$. Define 
\[
I^{(W,G)}(\bt,z):=\lim_{\lambda\to 0} I^{tw}(\bt,z)|_{\bt \in \sH^{nar}_{W,G}}.
\]
By Corollary~\ref{c:nonequivlimit}, the function $I^{(W,G)}(\bt, z)$ lies on
$\sL^{(W,G)}$.  

To state the mirror theorem, we apply the following:
\begin{convention}\label{defy conventions} From this point forward, we restriction to $\bt$ of degree two in $\sH^{nar}_{W,G}$.
Let $t$ denote the dual coordinate to $\phi_{\jw^2}$.  Then we may write
\begin{equation}\label{e:tsmall}
\bt= t \phi_{\jw^2 } + \sum_{\substack{ h \in \hat{S} \setminus \jw^2\\ \deg_W\phi_h = 2}} t^h\phi_h.
\end{equation}
\end{convention}

We will need the following lemma.
\begin{lemma}\label{l:I}  For $\bt$ as in \eqref{e:tsmall}, we may expand the $I$--function as
\begin{equation}
I^{(W,G)}(\bt,z)= zF(\bt)\phi_\jw +\bv{G}(\bt) +\cO(z^{-1})
\end{equation}
with $F(\bt)=F_0(t)+\cO(2)$ and 
\[
\bv{G}(\bt)=G_{\jw^2}(t)\phi_{\jw_2}+\sum_{\substack{ h \in \hat{S} \setminus \jw^2\\ \deg_W\phi_h = 1}} t^hG_h(t)\phi_h +\cO(2).
\]
Here $\cO(2)$ denotes terms of degree at least two in the variables $\{t^h | h\neq \jw^2\}$.
\end{lemma}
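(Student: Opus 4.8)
The plan is to work directly from the explicit formula \eqref{e:modif}, $I^{tw}(\bt,z)=\sum_{\bv{n}} M_{\bv{n}}(z)J^{un}_{\bv{n}}(\bt,z)$, together with the closed form \eqref{e:jun} for $J^{un}_{\bv{n}}$, and to read off the expansion by tracking powers of $z$ sector by sector. First I would take the non-equivariant limit of the modification factor: setting $\lambda\to 0$ in $M_{\bv{n}}(z)$, each factor $\lambda+\tfrac15 z+\langle \sigma_k\rangle z+bz$ becomes $(\tfrac15+\langle\sigma_k\rangle+b)z$, a strictly positive scalar times $z$. Hence $M_{\bv{n}}(z)|_{\lambda=0}$ is a nonzero monomial in $z$ of degree $\sum_{k=1}^5\lfloor\sigma_k\rfloor$, where $\sigma_k:=\sum_{h}\bv{n}(h)i_k(h)$.

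Next I would combine this with the factor $z^{1-|\bv{n}|}$ carried by $J^{un}_{\bv{n}}$ and invoke the degree-two restriction. Since every $h$ with $t^h\neq 0$ in \eqref{e:tsmall} satisfies $\deg_W\phi_h=2$, i.e.\ $\sum_k i_k(h)=1$, we get $\sum_k\sigma_k=|\bv{n}|$. Therefore the total power of $z$ in the $\bv{n}$-summand is
\[
1-|\bv{n}|+\sum_{k=1}^5\lfloor\sigma_k\rfloor = 1-\sum_{k=1}^5\langle\sigma_k\rangle = 1-\sum_{k=1}^5 i_k(h_{\bv{n}}) = 1-\tfrac12\deg_W\phi_{h_{\bv{n}}},
\]
using the definition $i_k(h_{\bv{n}})=\langle\sigma_k\rangle$. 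This identity is the heart of the lemma: a summand lands in $z^1$ exactly when $h_{\bv{n}}=\jw$ (degree $0$), in $z^0$ exactly when $\deg_W\phi_{h_{\bv{n}}}=2$, and contributes to $\cO(z^{-1})$ otherwise. It immediately yields the claimed shape $I^{(W,G)}=zF(\bt)\phi_\jw+\bv{G}(\bt)+\cO(z^{-1})$ with $\bv{G}$ supported in degree-two sectors.

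It then remains to organize by order in the variables $\{t^h : h\neq\jw^2\}$. I would isolate the value $m=\bv{n}(\jw^2)$ and filter by $\sum_{h\neq\jw^2}\bv{n}(h)$. The pure-$\jw^2$ part contributes $F_0(t)$ in the $\phi_\jw$ slot (for $m\equiv 0 \imod 5$) and $G_{\jw^2}(t)\phi_{\jw^2}$ in the $z^0$ slot (for $m\equiv 1 \imod 5$), both functions of $t$ alone. For the linear part of $F$, a single non-$\jw^2$ factor $h_0$ producing $h_{\bv{n}}=\jw$ would force $\langle m/5+i_k(h_0)\rangle=0$ for all $k$, hence all $i_k(h_0)$ equal; combined with $\sum_k i_k(h_0)=1$ this forces $h_0=\jw^2$, a contradiction. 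Thus $F(\bt)=F_0(t)+\cO(2)$.

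The main obstacle is the matching of sectors in the linear part of $\bv{G}$: I must show the $z^0$, order-one coefficient of a fixed sector $\phi_h$ ($\deg_W\phi_h=2$, $h\neq\jw^2$) comes only from the monomial containing $t^h$ itself, giving $t^hG_h(t)\phi_h$. For a single non-$\jw^2$ factor $h_0$, the condition $h_{\bv{n}}=h$ reads $i_k(h)-i_k(h_0)\equiv m/5 \imod 1$ for all $k$, so these differences take a constant residue $c/5$; writing $p$ of them as $c/5$ and the remaining $5-p$ as $c/5-1$, the constraint $\sum_k(i_k(h)-i_k(h_0))=0$ forces $p=5-c$. I would then rule out every $c\in\{1,2,3,4\}$ by checking it violates either narrowness ($i_k\neq 4/5$ for $h$ and $h_0$) or the normalization $\sum_k i_k=1$: for $c=1$ a coordinate is pushed to $i_k(h_0)=4/5$ and for $c=4$ to a negative value, while for $c=2,3$ the surviving coordinates already over-sum past $1$. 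Only $c=0$, i.e.\ $h_0=h$, survives, giving the stated form. This case analysis is the one genuinely delicate computation; the rest is bookkeeping once the $z$-power identity above is in hand.
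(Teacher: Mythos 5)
Your route is the paper's: expand $I^{(W,G)}$ via \eqref{e:modif} and \eqref{e:jun}, track the $z$--power of each $\bv{n}$--summand, and sort the $z^1$ and $z^0$ coefficients by their order in the variables $\{t^h\}_{h\neq \jw^2}$. Your exact identity for that power, $1-|\bv{n}|+\sum_k\lfloor\sigma_k\rfloor = 1-\sum_k i_k(h_{\bv{n}})$ with $\sigma_k=\sum_h\bv{n}(h)i_k(h)$, is a sharpened form of the inequality the paper uses, and your treatment of $F(\bt)=F_0(t)+\cO(2)$ and of the pure--$t$ terms agrees with the paper's. The genuine divergence is in the linear part of $\bv{G}$, and there your argument is the correct one while the paper's is not: the paper's proof asserts that $\sum_{k=1}^5\br{\tfrac m5+i_k(h')}=1$ forces $5\mid m$, and this assertion is false. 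Take $h'=(\tfrac25,\tfrac25,\tfrac25,\tfrac35,\tfrac15)$ (the paper's own item (iii) in the list at the end of Section~\ref{twisted}), so that $i(h')=(\tfrac15,\tfrac15,\tfrac15,\tfrac25,0)$, and $m=4$: then $\sum_k\br{\tfrac45+i_k(h')}=0+0+0+\tfrac15+\tfrac45=1$. The corresponding summand of \eqref{e:modif} survives the limit $\lambda\to0$ with nonzero coefficient $\tfrac{2}{5^4\cdot 4!}\,t^4t^{h'}$ at $z^0$, and it lies in the sector with $i_k(h_{\bv{n}})=(0,0,0,\tfrac15,\tfrac45)$ --- a \emph{broad} sector of $\sH_{W,G}^{ext}$ (the last coordinate has $\varTheta_k=0$), not in $\phi_{h'}$. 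Your residue analysis is immune to this error precisely because your exclusion of $c=4$ invokes narrowness of the \emph{target} $h$ (that $i_k(h)\le\tfrac35$), which is exactly the hypothesis violated in this example; your remaining cases ($c=1$ via narrowness of the source $h_0$, and $c=2,3$ via the degree normalization) are likewise correct.

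The same example, however, exposes a gap in your own write-up. Since broad-sector terms genuinely do occur in the $z^0$ coefficient at linear order, your conclusion that ``only $c=0$ survives, giving the stated form'' is not literally true of $I^{(W,G)}$ as defined by the limit of \eqref{e:modif}: your case analysis determines the components of $\bv{G}$ along the narrow sectors, but it cannot make the broad components vanish --- they are nonzero. The Lemma as stated is correct only if $I^{(W,G)}$, hence $F$ and $\bv{G}$, is understood as the projection of $\lim_{\lambda\to 0}I^{tw}$ onto the narrow state space. That convention is implicit throughout the paper: it is forced already by the claim that $I^{(W,G)}$ lies on $\sL^{(W,G)}\subset \sH^{nar}_{W,G}((z^{-1}))$, and the explicit formulas (i)--(v) record only narrow components. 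So you should add one step: declare the projection, observe that every summand with $i_k(h_{\bv{n}})=\tfrac45$ for some $k$ is discarded by it, and then apply your narrow-target case analysis to what remains. With that addition your proposal is a complete and correct proof of the Lemma (read, as you did, with the index condition $\deg_W\phi_h=2$) --- indeed a repair of the paper's argument rather than a reproduction of it.
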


\begin{proof}
Applying the non--equivariant limit $\lambda \mapsto 0$ to \eqref{e:modif}, we can write 
\[
I^{(W,G)}(\bt,z)=\sum_{\bv{n}}\prod_{\substack{k=1,\dots,5\\ 0\leq m<\lfloor \sum \bv{n}(h)i_k(h) \rfloor}}\left(\bigg(\Big\langle\sum_{h \in \hat{S}} \bv{n}(h)i_k(h)\Big\rangle+\tfrac{1}{5}+m\bigg)z\right) \frac{\prod_h t^{\bv{n}(h)}}{z^{|\bv{n}|-1}\prod_h \bv{n}(h)!}\phi_{h_{\bv{n}}},
\] 
where the first sum is now over $\bv{n}: \hat{S} \to \ZZ_{\geq 0}$.

For a given $\bv{n}$, the power of $z$ in the corresponding summand is 
\begin{equation*}
1-\sum_{h\in\hat{S}} \bv{n}(h)+\sum_{k=1}^5\lfloor \sum_{h\in\hat{S}} \bv{n}(h)i_k(h)\rfloor
\end{equation*}
where the first two terms are the contribution from $J^{un}(\bt,z)$ and the last sum is from the modification factor $M_{\bv{n}}$. Since we have restricted to $\deg_W(\phi_h)\leq 2$, we have
\[
1-\sum_{h\in\hat{S}} \bv{n}(h)+\sum_{k=1}^5\lfloor \sum_{h\in\hat{S}} \bv{n}(h)i_k(h)\rfloor\leq 1-\sum_{h\in\hat{S}} \bv{n}(h)+\sum_{k=1}^5 \sum_{h\in\hat{S}} \bv{n}(h)i_k(h)\leq 1.
\]

Consider the coefficient of $z^1$. For a particular $\bv{n}$ to contribute to this term, it must be the case that
\[
\sum_{h\in\hat{S}} \bv{n}(h)=\sum_{k=1}^5\lfloor \sum_{h\in\hat{S}} \bv{n}(h)i_k(h)\rfloor
\]
which implies that
\[
\sum_{k=1}^5\lfloor \sum_{h\in\hat{S}} \bv{n}(h)i_k(h)\rfloor=
\sum_{k=1}^5 \sum_{h\in\hat{S}} \bv{n}(h)i_k(h).
\] 
Therefore $ i_k(h_\bv{n})=\br{\sum_{h\in\hat{S}}\bv{n}(h)i_k(h)}=0$ for $1\leq k \leq 5$, and $h_{\bv{n}} = \jw$.
This gives us the first term $zF(\bt)\phi_{\jw}$. It is clear that $F(\bt)=F_0(t)+\cO(2)$, because for $\br{\sum_{h\in\hat{S}}\bv{n}(h)i_k(h)}=0$ to hold for all $k$ there cannot be just one $t^h$ variable.

Now consider the coefficient of $z^0$. There are two kinds of summands we need to consider, those only in the variable $t$ and those  of the form $Ct^{h'}(t)^m$ for some $h'\in \hat{S}$ and $m\geq 0$. 

In the first case, consider the $t^{5m + l}$--term.  Here $\sum_{h \in \hat{S}} \bv{n}(h)i_k(h)=m+\tfrac{l}{5}$, thus the power of $z$ in this term is $5m+1-5m-l$.  Because this is zero, we arrive at $l = 1$, and thus $i_k(h_{\bv{n}}) = \tfrac15$ for all $k$.

The exponent of $z$ in the coefficient of $t^{h'}\cdot(t)^m$ is 
\begin{equation}\label{e:zeropow}
\sum_{k =1}^5 \lfloor \tfrac m5 +i_k(h') \rfloor-m.
\end{equation}

When restricted to $h'\in \hat{S}$, we have 
\[
\sum_{k =1}^5 \lfloor \tfrac m5 +i_k(h') \rfloor-m + \sum_{k=1}^5 \br{\tfrac m5 + i_k(h')}= 1.
\]
Thus expression~\eqref{e:zeropow} is equal to 0 if and only if $\sum_{k=1}^5 \br{\tfrac m5 + i_k(h')}= 1$. One can easily check that this implies $5|m$, therefore $\br{\sum_{h}\bv{n}(h)i_k(h)} =  \br{\tfrac m5 + i_k(h')} = i_k(h')$.
This gives the other terms of $\bv{G}(\bt)$. 
\end{proof}

Now we are prepared to state the mirror theorem.

\begin{theorem}[LG Mirror Theorem]\label{t:mirror}
With $F(\bt)$ and $\bv{G}(\bt)$ as above, and $\bt$ as in \eqref{e:tsmall}, 
\begin{equation}\label{e:mirror}
J^{(W,G)}(\btau(\bt),z)=\frac{I^{(W,G)}(\bt,z)}{F(\bt)} \qquad \text{where } 
 \btau(\bt)=\tfrac{\bv{G}(\bt)}{F(\bt)}.
\end{equation}
\end{theorem}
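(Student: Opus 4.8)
The plan is to deduce the identity from the uniqueness characterization of the $J$--function on the overruled Lagrangian cone, in the standard style of Givental's mirror theorems. Two facts are already available: that $I^{(W,G)}(\bt,z)$ lies on $\sL^{(W,G)}$ (Corollary~\ref{c:nonequivlimit} together with Proposition~\ref{p:sympl} and the non--equivariant limit), and the explicit expansion of $I^{(W,G)}$ provided by Lemma~\ref{l:I}. The whole argument then reduces to recognizing $I^{(W,G)}/F$ as a point of the cone sitting in the affine slice that defines the $J$--function.

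First I would use that $\sL^{(W,G)}$ is a cone. Since $F(\bt)=F_0(t)+\cO(2)$ has constant term $F(0)=1$ (indeed $I^{(W,G)}(0,z)=z\phi_\jw$), it is invertible as a power series, and the rescaled family $I^{(W,G)}(\bt,z)/F(\bt)$ again lies on $\sL^{(W,G)}$. By Lemma~\ref{l:I} this family has the expansion
\[
\frac{I^{(W,G)}(\bt,z)}{F(\bt)} = z\phi_\jw + \frac{\bv{G}(\bt)}{F(\bt)} + \cO(z^{-1}) = z\phi_\jw + \btau(\bt) + \cO(z^{-1}).
\]
The point to emphasize is that this expansion contains no power of $z$ higher than $z^1$, that its $z^1$--coefficient is exactly the unit $\phi_\jw$, and that its $z^0$--coefficient $\btau(\bt)=\bv{G}(\bt)/F(\bt)$ is an honest element of $\sH^{nar}_{W,G}$.

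Next I would invoke the characterization of the $J$--function recorded in Section~\ref{formal}: via the transversality result of \cite{G3}, $J^{(W,G)}(\btau,z)$ is the unique family of points on $\sL^{(W,G)}$ lying in the affine slice with $z^1$--part equal to $\phi_\jw$, parametrized by its $z^0$--coefficient $\btau$. Because $I^{(W,G)}(\bt,z)/F(\bt)$ is a point of $\sL^{(W,G)}$ lying in precisely this slice with $z^0$--coefficient $\btau(\bt)$, uniqueness forces
\[
\frac{I^{(W,G)}(\bt,z)}{F(\bt)} = J^{(W,G)}(\btau(\bt),z),
\]
which is exactly \eqref{e:mirror}.

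I expect the main obstacle to be not the uniqueness step, which is formal once the pieces are assembled, but rather guaranteeing that $I^{(W,G)}/F$ really has the normalized $J$--form: that no positive power of $z$ beyond $z^1$ survives and that the $z^1$--coefficient is exactly a scalar multiple of the unit. Were either to fail one would be forced into a genuine Birkhoff factorization (a nontrivial mirror transformation involving positive powers of $z$) rather than the single rescaling by $F$ and the change of variables $\btau=\bv{G}/F$. Both requirements are bought precisely by the degree--two restriction of Convention~\ref{defy conventions} through the power--of--$z$ bound established inside the proof of Lemma~\ref{l:I}; I would therefore be careful to flag that this degree bound is the load--bearing input, and that it is what places the present case in the ``easy'' regime of the mirror theorem.
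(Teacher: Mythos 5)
Your proposal is correct and is essentially the paper's own argument: the paper proves Theorem~\ref{t:mirror} in two lines by invoking the unique characterization of $J^{(W,G)}$ as the family on $\sL^{(W,G)}$ of the form $z\phi_{\jw}+\bt+\cO(z^{-1})$, combined with Lemma~\ref{l:I}. Your write-up simply makes explicit the steps the paper leaves implicit (the cone property justifying division by $F(\bt)$, the invertibility of $F$, and the degree-two restriction as the reason no higher powers of $z$ appear), which is a faithful elaboration rather than a different route.
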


\begin{proof}
Recall that the $J$--function is uniquely characterized by the fact that is lies on $\sL^{(W,G)}$ and is of the form $z\phi_{\jw}+\bt+\cO(z^{-1})$. The theorem follows from this fact and the previous lemma. 
\end{proof}

\begin{remark}The function $\btau(\bt)$ is referred to as the mirror transformation.
\end{remark}

Let $J_h^{(W,G)}(t,z)$ denote the derivative 
\[
J_h^{(W,G)}(t,z):=z\frac{\partial}{\partial t^h}J^{(W,G)}(\bt,z)|_{\bt=t}.
\] 
Recall by \eqref{e:Jgens} that these functions determine the small cone $\sL^{(W,G)}_{small}$. The rest of the section will be devoted to calculating these functions. 
In fact as we shall see it is sufficient to compute $J_h^{(W,G)}(\bt, z)$ for $\phi_h$ of degree at most two. These will determine all others. 

Expand $I^{(W,G)}(\bt, z)$ in terms of powers of $t^h$ for $h \neq \jw^2$
\[
I^{(W,G)}(\bt,z)=I^{(W,G)}_\jw(t,z)+ \tfrac{1}{z}\Big(\sum_{h}t^hI^{(W,G)}_h(t,z)\Big)+\left(\tfrac{1}{z}
\right)^2\Big(\sum_{h_1,h_2}t^{h_1}t^{h_2}I^{(W,G)}_{h_1,h_2}(t,z)\Big)+\cdots
\]
so that 
\begin{equation}\label{e:Ih}
I^{(W,G)}_h(t,z)=z\frac{\partial}{\partial t^h} I^{(W,G)}(\bt,z)|_{\bt=t}.
\end{equation}

As an immediate consequence of the previous theorem and Lemma~\ref{l:I} we obtain the following corollary.

\begin{corollary}\label{c:mirror}
Given $h\in \hat{S}$ with $\deg_W{\phi_h}\leq 2$, $\phi_h \neq \phi_{\jw^2}$, there exist functions $F_0(t)$, $G_{\jw^2}$, and $G_h(t)$ determined by $I^{(W,G)}_h(t,z)$ such that $F_0$ and $G_h$ are invertible, and 
\[
J^{(W,G)}_h(\tau(t),z)=\frac{I^{(W,G)}_h(t,z)}{G_h(t)} \quad \text{ where } \tau(t)=\frac{G_{\jw^2}(t)}{F_0(t)}
\]
\end{corollary}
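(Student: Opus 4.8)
The plan is to derive the corollary directly from the LG Mirror Theorem~\ref{t:mirror} by applying $z\,\partial/\partial t^h$ to the identity $J^{(W,G)}(\btau(\bt),z) = I^{(W,G)}(\bt,z)/F(\bt)$ and then restricting to the small slice $\bt = t\phi_{\jw^2}$. Writing $\btau^{h'}(\bt)$ for the $\phi_{h'}$-component of $\btau(\bt) = \bv{G}(\bt)/F(\bt)$, the chain rule gives
\[
z\frac{\partial}{\partial t^h}J^{(W,G)}(\btau(\bt),z) = \sum_{h'}\left(z\frac{\partial\btau^{h'}}{\partial t^h}\right)\frac{\partial J^{(W,G)}}{\partial\tau^{h'}}(\btau(\bt),z),
\]
so the computation reduces to evaluating the Jacobian $\partial\btau^{h'}/\partial t^h$ on the small slice; here I would feed in Lemma~\ref{l:I}.

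First I would record, from Lemma~\ref{l:I}, that $\btau^{\jw^2}(\bt) = (G_{\jw^2}(t)+\cO(2))/F(\bt)$ and $\btau^{h'}(\bt) = (t^{h'}G_{h'}(t)+\cO(2))/F(\bt)$ for $h'\neq\jw^2$, where $\cO(2)$ collects terms of degree at least two in the coordinates $\{t^{h'} : h'\neq\jw^2\}$. Restricting to $\bt=t$ sends every $\cO(2)$ term and every explicit factor $t^{h'}$ (with $h'\neq\jw^2$) to zero, so a short differentiation shows that the only surviving Jacobian entry is the diagonal one $\partial\btau^{h}/\partial t^h|_{\bt=t} = G_h(t)/F_0(t)$. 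Indeed, the entry $\partial\btau^{\jw^2}/\partial t^h$ vanishes because $G_{\jw^2}(t)$ is independent of $t^h$ and all remaining contributions are at least linear in the coordinates being set to zero; the off-diagonal entries $\partial\btau^{h'}/\partial t^h$ (for $h'\neq h,\jw^2$) vanish since $G_{h'}(t)$ depends only on $t$ and each numerator carries a factor $t^{h'}$ that dies on the slice. Since $\btau(\bt)|_{\bt=t} = \tau(t)\phi_{\jw^2}$ with $\tau(t)=G_{\jw^2}(t)/F_0(t)$, the left-hand side collapses to $(G_h(t)/F_0(t))\,J_h^{(W,G)}(\tau(t),z)$.

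For the right-hand side I would apply the quotient rule to $I^{(W,G)}(\bt,z)/F(\bt)$ and use that $F(\bt)=F_0(t)+\cO(2)$ has $\partial F/\partial t^h|_{\bt=t}=0$, so only the term $z\,\partial_{t^h}I^{(W,G)}|_{\bt=t}/F_0(t) = I_h^{(W,G)}(t,z)/F_0(t)$ survives, by the definition~\eqref{e:Ih}. Equating the two sides and cancelling the common factor $F_0(t)$ yields $J_h^{(W,G)}(\tau(t),z)=I_h^{(W,G)}(t,z)/G_h(t)$, while the invertibility of $F_0$ and $G_h$ follows from their constant terms being nonzero in the leading-order description of Lemma~\ref{l:I}. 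The only real work is the bookkeeping in the first display --- checking that all but the diagonal mirror-map derivative drop out after restriction --- and this is fully governed by the degree-two truncation recorded in Lemma~\ref{l:I}; since no new geometric input enters, the result is indeed an immediate corollary.
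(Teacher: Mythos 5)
Your treatment of the degree--two elements $h$ is correct and is essentially the paper's own argument: differentiate the mirror identity \eqref{e:mirror} in $t^h$, restrict to $\bt = t$, and use Lemma~\ref{l:I} to see that only the diagonal Jacobian entry $G_h(t)/F_0(t)$ of the mirror map survives, while the right side collapses to $I^{(W,G)}_h(t,z)/F_0(t)$ via \eqref{e:Ih}. The paper leaves this chain--rule bookkeeping implicit; you have simply written it out.

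There is, however, one case in the corollary's scope that your argument cannot reach: $h = \jw$, the unique narrow element of $W$--degree zero (it is item (i) in the list of $I$--functions following the corollary, and the function $\tfrac{t}{5}I^{(W,G)}_\jw$ is exactly what gets matched with $I^{\cW'}_e$ later, so it cannot be dropped). By the convention \eqref{e:tsmall}, the parameter $\bt$ has no $\phi_\jw$--component, so $t^\jw$ is not one of the coordinates on the locus where Theorem~\ref{t:mirror} is asserted; one cannot differentiate an identity known only on the degree--two slice in a direction transverse to that slice, and Lemma~\ref{l:I} records no $t^\jw$--dependence for your Jacobian analysis to use. The paper disposes of this case separately in one line: setting $\bt = t$ in \eqref{e:mirror} gives $J^{(W,G)}(\tau(t)\phi_{\jw^2},z) = I^{(W,G)}(t,z)/F_0(t)$, and since $\phi_\jw$ is the unit, the string equation identifies $J^{(W,G)}_\jw(\tau(t),z)$ with $J^{(W,G)}(\tau(t)\phi_{\jw^2},z)$, while $I^{(W,G)}_\jw(t,z)$ is by definition the restriction $I^{(W,G)}(\bt,z)|_{\bt=t}$; thus the claim holds with $G_\jw = F_0$. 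Adding this case makes your proof complete. A smaller point: the invertibility of $F_0$ and $G_h$ does not actually follow from Lemma~\ref{l:I} (which never asserts nonvanishing constant terms) but from the explicit expansion \eqref{e:modif}, whose $l=0$ terms show each of these series has constant term $1$.
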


\begin{proof}
For $h=\jw$ this follows by setting $\bt=t$. 

For the other $h$ use equation~\eqref{e:mirror}, differentiate both sides with respect to $t^h$, and set $\bt=t$. By equation~\eqref{e:Ih}, the left hand side equals
\[
\frac{G_h(t)}{F_0(t)}J^{(W,G)}_h(\tau(t), z)
\]
and the right hand side equals
\[
\frac{I^{(W,G)}_h(t,z)}{F_0(t)}
\]
as desired. 
\end{proof}

\begin{remark}\label{r:mirror}
To justify the fact that we call Theorem~\ref{t:mirror} and its corollary a ``mirror theorem,'' one can check that up to a factor of $t$ or $t^2$, the functions $I^{(W,G)}_h(t,z)$ satisfy the Picard--Fuchs equations of the mirror family $M_\psi$ around $\psi = 0$.  These equations are found in \cite{LSh}, Table~1 (where the variable $t$ in our paper corresponds to $\psi$ in \cite{LSh}).  One may check this fact directly, or it follows immediately from Theorem~\ref{t:U}.
Combining this fact with Corollary~\ref{c:mirror} yields Theorem~\ref{t:intromirror}.
\end{remark}

The functions $I_h(t,z)$ may be computed directly from \eqref{e:modif}.  We list below $I_h(t,z)$ for $h \in \hat{S} \setminus \jw^2$ satisfying $\deg(\phi_{h}) \leq 2$.  These formulas will be needed in the next section.
\begin{enumerate}
\item[(i)] For $h = \jw$,
\[
tI^{(W,G)}_\jw(t,z) = \sum_{k = 1, 2, 3, 4} \phi_{\jw^{k}} z^{2 - k} \sum_{l\geq 0} t^{k + 5l} \frac{\Gamma((k + 5l)/5)^5}{\Gamma(k/5)\Gamma(k + 5l)}.
\]

\item[(ii)] For $h = (\tfrac 15,\tfrac 15, \tfrac 15,\tfrac 35,\tfrac 45)$ and $h_1 = (\tfrac 45,\tfrac 45,\tfrac 45,\tfrac 15,\tfrac 25)$,
\[
\begin{split}
I^{(W,G)}_h(t,z) 
 	=& z\phi_h  \sum_{l \geq 0} t^{ 5l} \frac{\Gamma((1 + 5l)/5)^3\Gamma((3+ 5l)/5)\Gamma((4 + 5l)/5)}{\Gamma(1/5)^3\Gamma(3/5)\Gamma(4/5)\Gamma(1 + 5l)} \\ 
	+& \phi_{h_1}
 \frac{2}{25} \sum_{l \geq 0} t^{3 + 5l} \frac{\Gamma((4 + 5l)/5)^3\Gamma((6+ 5l)/5)\Gamma((7+ 5l)/5)}{\Gamma(4/5)^3\Gamma(6/5)\Gamma(7/5)\Gamma(4 + 5l)}
\end{split}
\]

\item[(iii)] For $h = (\tfrac 25,\tfrac 25,\tfrac 25,\tfrac 35,\tfrac 15)$ and $h_1 = (\tfrac 35,\tfrac 35,\tfrac 35,\tfrac 45,\tfrac 25)$, 
\[ 
\begin{split}
I^{(W,G)}_h(t,z) = &z\phi_h \sum_{l \geq 0} t^{5l} \frac{\Gamma((2 + 5l)/5)^3\Gamma((3+ 5l)/5)\Gamma((1 + 5l)/5)}{\Gamma(2/5)^3\Gamma(3/5)\Gamma(1/5)\Gamma(1 + 5l)} \\
&+\:\phi_{h_1} \sum_{l \geq 0} t^{1 + 5l} \frac{\Gamma((3 + 5l)/5)^3\Gamma((4+ 5l)/5)\Gamma((2+ 5l)/5)}{\Gamma(3/5)^3\Gamma(4/5)\Gamma(2/5)\Gamma(2 + 5l)}.
\end{split}
\]

\item[(iv)] For $h = (\tfrac 15,\tfrac 15,\tfrac 25,\tfrac 25,\tfrac 45)$ and $h_1 = (\tfrac 35,\tfrac 35,\tfrac 45,\tfrac 45,\tfrac 15)$, 
\[ 
\begin{split}
I^{(W,G)}_h(t,z) 
	= &z\phi_h \sum_{l \geq 0} t^{ 5l} \frac{\Gamma((1 + 5l)/5)^2\Gamma((2+ 5l)/5)^2\Gamma((4 + 5l)/5)}{\Gamma(1/5)^2\Gamma(2/5)^2\Gamma(4/5)\Gamma(1 + 5l)}  \\
	+&\frac{\phi_{h_1}}{5} \sum_{l \geq 0} t^{2 + 5l} \frac{\Gamma((3 + 5l)/5)^2\Gamma((4+ 5l)/5)^2\Gamma((6 + 5l)/5)}{\Gamma(3/5)^2\Gamma(4/5)^2\Gamma(6/5)\Gamma(3 + 5l)}.
\end{split}
\]

\item[(v)] For $h = (\tfrac 15,\tfrac 15,\tfrac 35,\tfrac 35,\tfrac 25)$ and $h_1 = (\tfrac 25,\tfrac 25,\tfrac 45,\tfrac 45,\tfrac 35)$,
\[ 
\begin{split}
I^{(W,G)}_h(t,z) 
	= &z\phi_h \sum_{l \geq 0} t^{ 5l} \frac{\Gamma((1 + 5l)/5)^2\Gamma((3+ 5l)/5)^2\Gamma((2 + 5l)/5)}{\Gamma(1/5)^2\Gamma(3/5)^2\Gamma(2/5)\Gamma(1 + 5l)} \\
	+\:&\phi_{h_1} \sum_{l \geq 0} t^{1 + 5l} \frac{\Gamma((2 + 5l)/5)^2\Gamma((4+ 5l)/5)^2\Gamma((3 + 5l)/5)}{\Gamma(2/5)^2\Gamma(4/5)^2\Gamma(3/5)\Gamma(2 + 5l)}.
\end{split}
\]
\end{enumerate}

\section{The LG/CY correspondence for the mirror quintic}

\subsection{The state space correspondence}\label{s:ssc}
An isomorphism between the Landau--Ginzberg state space and the cohomology of corresponding Calabi--Yau hypersurfaces is proven in \cite{ChR2}.  In the case of the mirror quintic, the work implies in particular an isomorphism between $H^{even}_{CR}(\cW)$ and  $\sH_{W,G}^{nar}$ as graded vector spaces.  We will describe the correspondence explicitly below.  Recall that $H^{even}_{CR}(\cW)$ can be split into summands indexed by $g \in \tilde{S}$, where $\tilde{S}$ is composed of elements $g = (r_1, r_2, r_3, r_4, r_5) \in G$ such that at least two $r_i$ are 0.  The basis for $\sH_{W,G}^{nar}$ on the other hand is given by $\{\phi_h\}_{h \in \hat{S}}$ where $\hat{S}$ runs over elements $h=(r_1, r_2, r_3, r_4, r_5) \in G$ such that $r_i  \neq 0$ for all $i$.  

\subsubsection{$\dim(\cW_g) = 3$}

For $g=e$, map

\begin{align*}
\mu: H^i \mapsto \phi_{\jw^{i+1}}.
\end{align*}

\subsubsection{$\dim(\cW_g) = 1$}

For $g=(0,0,0,\tfrac 25,\tfrac 35)$, let $h = (\tfrac 15,\tfrac 15,\tfrac 15,\tfrac 35,\tfrac 45)$ and $h_1 = (\tfrac 45,\tfrac 45,\tfrac 45,\tfrac 15,\tfrac 25)$, then
\begin{align*}
\mu: \ii_g &\mapsto \phi_h\\
 \ii_{g}H & \mapsto \phi_{h_1}.
\end{align*}
For $g=(0,0,0,\tfrac 15,\tfrac 45)$, let $h = (\tfrac 25,\tfrac 25,\tfrac 25,\tfrac 35,\tfrac 15)$ and $h_1 = (\tfrac 35,\tfrac 35,\tfrac 35,\tfrac 45,\tfrac 25)$, then
\begin{align*}
\mu: \ii_g &\mapsto \phi_{h}\\
 \ii_{g}H & \mapsto \phi_{h_1}.
\end{align*}

\subsubsection{$\dim(\cW_g) = 0$}

Let \begin{align*}
\mu: \ii_g &\mapsto \phi_h,
\end{align*} where, 

if $g=(0,0,\tfrac 15,\tfrac 15,\tfrac 35)$, $h = (\tfrac 15,\tfrac 15,\tfrac 25,\tfrac 25,\tfrac 45)$; 

if $g = (0,0,\tfrac 45,\tfrac 45, \tfrac 25)$,  $h = (\tfrac 45,\tfrac 45,\tfrac 35,\tfrac 35,\tfrac 15)$;

if $g=(0,0,\tfrac 25,\tfrac 25,\tfrac 15)$, $h = (\tfrac 15,\tfrac 15,\tfrac 35,\tfrac 35,\tfrac 25)$;

if $g = (0,0,\tfrac 35,\tfrac 35,\tfrac 45)$, $h =  (\tfrac 45,\tfrac 45,\tfrac 25,\tfrac 25,\tfrac 35)$.
\\
\\
If $g$ is a permutation of one of the above, define the map by permuting the $h$ elements accordingly.  By extending the above identification linearly, we obtain a map \[\mu: H^{even}_{CR}(\cW) \to \sH^{nar}_{W,G}\] identifying the state spaces.
Note that this identification preserves the grading and (up to a constant factor) preserves the pairing.  

\subsection{Analytic continuation of $I^{\cW}$}

Let $J^{\cW}(\bs, z)$ denote the (big) $J$--function of the mirror quintic $\cW$.  Let $s^g$ denote the dual coordinate to the fundamental class $\ii_g$ on $\cW_g$.  We define 
\[
J^{\cW}_g(s, z) := z\frac{\partial}{\partial s^g} J^{\cW}(\bs, z) |_{\bs = sH}.
\]
For $g$ of age at most 1, we know by \cite{LSh} that 
\begin{equation}\label{e:A-model}
 J_g^{\cW}(\sigma(s),z) = 
 \frac{I^{\cW}_g(s,z)}{H_g(s)} \qquad \text{where } 
 \sigma(s) = \frac{G_0(s)}{F_0(s)}, 
\end{equation}
where here $H_g, G_0,$ and $F_0$ are explicitly determined functions, and $I^{\cW}_g$ is given below.  Let $q = e^s$, then
\begin{enumerate}
\item[(i)]If $g = e = (0,0,0,0,0)$, 
\begin{equation*} 
 \begin{split}
&I^{\cW}_e(q,z) = zq^{ H/z}\left(1 + \sum_{\langle d \rangle = 0}  q^{d} 
\frac{\prod \limits_{1 \leq m\leq5d}(5H + mz)}{\prod \limits_{\substack{0 < b \leq d \\ 
\langle b \rangle = 0}} (H + bz)^5}\right).
 \end{split}
\end{equation*}  

\item[(ii)] If $g = (0,0,0,r_1, r_2)$, 

\begin{equation*}
 \begin{split}
 &I^{\cW}_g(q,z) =  \\ 
 zq^{H/z}\ii_g &\Bigg(1 +  \sum_{\langle d \rangle = 0}   q^{d} 
 \frac{\prod \limits_{1 \leq m\leq5d}(5H + mz)}{\prod 
 \limits_{\substack{0 < b \leq d \\ \langle b \rangle = 0}} (H + bz)^3\prod 
 \limits_{\substack{0 < b \leq d \\ \langle b \rangle = r_2}} 
 (H + bz)\prod 
 \limits_{\substack{0 < b \leq d \\ \langle b \rangle = r_1}} 
  (H + bz)}\Bigg).
 \end{split}
\end{equation*}
\item[(iii)] If $g = (0,0,r_1, r_1, r_2)$, let 
$g_1 = (\langle -r_1\rangle, \langle-r_1\rangle, 0,0, \langle r_2 - r_1\rangle)$.  Then
 
\begin{equation*}
 \begin{split} 
  &I^{\cW}_g(q,z) = \\
  zq^{ H/z}\ii_g &\left(1 + \sum_{\langle d \rangle = 0}  q^{d} \frac{\prod 
  \limits_{1 \leq m\leq5d}(5H+ mz)}{\prod 
    \limits_{ \substack{0 < b \leq d \\ \langle b \rangle = 0}} (H+bz)^2\prod 
   \limits_{\substack{0 < b \leq d \\ \langle b \rangle = \langle 3r_2 \rangle}} 
  (H+bz)^2\prod 
 \limits_{\substack{0 < b \leq d \\ \langle b \rangle = \langle 2r_1\rangle}}
 (H+bz)} \right) \\  
 +\:zq^{ H/z}\ii_{g_1} &\left(
 \sum_{\langle d \rangle = r_1}  q^{d} 
 \frac{\prod \limits_{1 \leq m\leq5d}(5H+ mz)}{\prod 
 \limits_{\substack{0 < b \leq d \\ \langle b \rangle = r_1}}
 (H+bz)^2\prod 
 \limits_{\substack{0 < b \leq d \\ \langle b \rangle = 0}} (H+bz)^2\prod 
 \limits_{\substack{0 < b \leq d \\ \langle b \rangle = r_2}}
 (H+bz)}\right) 
 \end{split}
\end{equation*}
\end{enumerate}

We will analytic continue each of the above $I$--functions from $q = 0$ to $t = q^{-1/5} = 0$ using the Mellon--Barnes method as in \cite{ChR}.  

\subsubsection{$g = e = (0,0,0,0,0)$}  The $I$--function $I_e^{\cW}$ is identical to the $I$--function in \cite[Equation (47)]{ChR}, after reinterpreting $H$ as the hyperplane class in $H^2(\cW)$.  We recall their analytic continuation and symplectic transformation in \ref{symp:dim3}.

\subsubsection{$g = (0,0,0,r_1, r_2)$}

The Gamma function satisfies 
\[ \Gamma(z+n)/\Gamma(z) = (z)(z+1) \cdots (z+n-1) \] and consequently
\[ \Gamma(1 + x/z + l)/\Gamma(1+x/z) = z^{-l} \prod_{k=1}^l(x+kz).\]

With this we can rewrite our $I$--functions.  In the present case we obtain

\[
\begin{split}
 &I^{\cW}_g(q,z)  =  z\ii_gq^{ H/z}  \cdot \\
 &\sum_{\langle{d} \rangle = 0} q^d
 \frac{\Gamma(1 + 5H/z + 5d) \Gamma(1 + H/z)^3 \Gamma( r_1 + H/z)\Gamma(r_2 + H/z)}{\Gamma(1 + 5H/z ) \Gamma(1 + H/z + d)^3 \Gamma( r_1 + H/z + d)\Gamma(r_2 + H/z + d)}.
 \end{split}
\]
The function $1/(e^{2 \pi i s} - 1)$ has simple poles at each integer with residue 1.  From this we can rewrite the function as a contour integral
\[
\begin{split}
I^{\cW}_g(q,z)  = &z\ii_g  q^{ H/z} \frac{\Gamma(1 + H/z)^3 \Gamma( r_1 + H/z)\Gamma(r_2 + H/z)}{\Gamma(1 + 5H/z )} \cdot \\
&\int_C \frac{1}{e^{2 \pi i s} - 1} q^s
 \frac{\Gamma(1 + 5H/z + 5s) }{ \Gamma(1 + H/z + s)^3 \Gamma( r_1 + H/z + s)\Gamma(r_2 + H/z + s)}.
 \end{split}
\]
where the curve $C$ goes from $+ i \infty$ to $- i \infty$ and encloses all nonnegative integers to the right. 

By closing the curve to the left, we obtain an expansion in terms of $t = q^{-1/5}$.  The Gamma function has poles at nonpositive integers, so we obtain a sum of residues at $s = -1 - l$ for $l \geq 0$ and $s = -H/z - m/5$ for $m \geq 1$.  In this case, at negative integers, the residue is a multiple of $H^2$, and so vanishes on $\cW_g$.  The residue similarly vanishes at $s = -H/z - m/5$ when $m$ is congruent to $0$, $5r_1$, or $5r_2$.  For the remaining values of $m$, we use
\[
\Res_{s = -H/z - m/5} \Gamma(1 + 5H/z + 5s) = -\frac{1}{5} \frac{(-1)^m}{\Gamma(m)},
\]
to obtain
\[
\begin{split}
& I^{\cW'}_g(t,z)  = z\ii_g   \frac{\Gamma(1 + H/z)^3 \Gamma( r_1 + H/z)\Gamma(r_2 + H/z)}{5 \Gamma(1 + 5H/z )} \cdot \\ 
&\sum_{\substack{0<m \\ m\not \equiv 0, 5r_1, 5r_2}} \frac{(-\xi)^m 2\pi i  }{e^{-2 \pi i H/z} - \xi^m} \frac{t^m}{\Gamma(m) \Gamma(1 - m/5)^3 \Gamma(r_1 - m/5)\Gamma(r_2 - m/5)}.
\end{split}
\]

Here the prefactor of $q^{H/z}$ cancels with a term in each residue.  Note that $\Gamma(r_1 - m/5) = \Gamma(1 - r_2 - m/5)$.  Recalling the identity $\Gamma(x)\Gamma(1-x) = \pi / \sin(\pi x)$, we simplify the above expression as
\[
\begin{split}
I^{\cW'}_g(t,z)  &= z\ii_g   \frac{\Gamma(1 + H/z)^3 \Gamma( r_1 + H/z)\Gamma(r_2 + H/z)}{5 \Gamma(1 + 5H/z )} \cdot \\ 
\sum_{\substack{0<m\\ m \not \equiv 0, 5r_1, 5r_2}} &\frac{(-\xi)^m 2\pi i  }{e^{-2 \pi i H/z} - \xi^m} \frac{t^m \pi^{-5}\Gamma(m/5)^3  \Gamma(r_1 + m/5)  \Gamma(r_2 + m/5)}{\Gamma(m) (\sin(\pi m/5))^{-3} (\sin(\pi(r_1 + m/5)))^{-1}(\sin(\pi(r_2 + m/5)))^{-1}} \\
&=z\ii_g   \frac{\Gamma(1 + H/z)^3 \Gamma( r_1 + H/z)\Gamma(r_2 + H/z)}{5 \Gamma(1 + 5H/z )} 
\cdot \\
\sum_{\substack{0<k<5 \\ k \not \equiv 0, 5r_1, 5r_2}}& \left( \frac{(-\xi)^k 2\pi i }{e^{-2 \pi i H/z} - \xi^k} \frac{1}{\textstyle  \Gamma(1-k/5)^3  \Gamma(1-(r_1 + k/5))\Gamma(1-(r_2 + k/5))} \right. 
\cdot \\
& \left. \sum_{l \geq 0} t^{k + 5l} \frac{\Gamma((k + 5l)/5)^3\Gamma(r_1 + (k+ 5l)/5)\Gamma(r_2 + (k + 5l)/5)}{\Gamma(k/5)^3\Gamma(r_1 + k/5)\Gamma(r_2 + k/5)\Gamma(k + 5l)}\right).
\end{split}
\]

\subsubsection{$g = (0,0,r_1, r_1, r_2)$} Let $g_1 = (\langle -r_1\rangle, \langle-r_1\rangle, 0,0, \langle r_2 - r_1\rangle)$.
Re--writing $I^{\cW}_g(t,z)$ in terms of Gamma functions yields 
\[
 \begin{split} 
  I^{\cW}_g(q,z) &= \\
  z &\ii_g \Gamma(1 - r_1)^2 \Gamma(1 - r_2)\left(\sum_{\langle d \rangle = 0}  q^{d} \frac{\Gamma(1 + 5d)}{\Gamma(1 + d)^2 \Gamma(1 - r_1 + d)^2 \Gamma(1 - r_2 + d)} \right) \\  
 +\: &\ii_{g_1}  
 \Gamma(r_1)^2 \Gamma(r_2)\left(\sum_{\langle d \rangle = r_1}  q^{d} \frac{\Gamma(1 + 5d)}{\Gamma(1 + d)^2 \Gamma(1 - r_1 + d)^2 \Gamma(1 - r_2 + d)} \right)\\
  \: =z\ii_g &\Gamma(1 - r_1)^2 \Gamma(1 - r_2)\int_C \frac{1}{e^{2 \pi i s} - 1} q^s
\frac{\Gamma(1 + 5s)}{\Gamma(1 + s)^2 \Gamma(1 - r_1 + s)^2 \Gamma(1 - r_2 + s)}
 \\
 +\: &\ii_{g_1}  \Gamma(r_1)^2 \Gamma(r_2)
  \int_C \frac{1}{e^{2 \pi i (s - r_1)} - 1} q^s
\frac{\Gamma(1 + 5s)}{\Gamma(1 + s)^2 \Gamma(1 - r_1 + s)^2 \Gamma(1 - r_2 + s)}
\end{split}
\]
Analytic continuing along the other side using the same method as above we obtain
\[
 \begin{split} 
  I^{\cW'}_g(t,z) 
=&z\frac{\ii_g}{5}   \Gamma(1 - r_1)^2 \Gamma(1 - r_2)
\cdot \\
&\sum_{0<k<5 | k  \equiv 5r_1, 5r_2} \left( \frac{(-\xi)^k 2\pi i }{1 - \xi^k} \frac{1}{\textstyle  \Gamma(1-k/5)^2 \Gamma(1 - (r_1 + k/5))^2 \Gamma(1-(r_2 + k/5))} \right. 
\cdot \\
 &\left. \sum_{l \geq 0} t^{k + 5l} \frac{\Gamma((k + 5l)/5)^2\Gamma(r_1 + (k+ 5l)/5)^2\Gamma(r_2 + (k + 5l)/5)}{\Gamma(k/5)^2\Gamma(r_1 + k/5)^2\Gamma(r_2 + k/5)\Gamma(k + 5l)}\right) \\
 + \: &\ii_{g_1}  \Gamma(r_1)^2 \Gamma(r_2)\cdot \\
 &\sum_{0<k<5 | k  \equiv 5r_1, 5r_2} \left( \frac{(-1)^k\xi^{k+ 5r_1} 2\pi i }{1 - \xi^{k+5r_1}} \frac{1}{\textstyle  \Gamma(1-k/5)^2 \Gamma(1 - (r_1 + k/5))^2 \Gamma(1-(r_2 + k/5))} \right. 
\cdot \\
 &\left. \sum_{l \geq 0} t^{k + 5l} \frac{\Gamma((k + 5l)/5)^2\Gamma(r_1 + (k+ 5l)/5)^2\Gamma(r_2 + (k + 5l)/5)}{\Gamma(k/5)^2\Gamma(r_1 + k/5)^2\Gamma(r_2 + k/5)\Gamma(k + 5l)}\right).
 \end{split}
\]

\subsection{The symplectic transformation}\label{ss:st}

\subsubsection{$g = e$}\label{symp:dim3}
Here we recall calculations from \cite{ChR}, and the symplectic transformation which they compute.  Analytic continuation of $I^{\cW}_e(t,z)$ yields
\[
I^{\cW '}_e(t,z) = z \frac{\Gamma(1 + H/z)^5}{5\Gamma(1 + 5H/z)} \sum_{k = 1, 2, 3, 4} \frac{(-\xi)^k 2 \pi i }{e^{-2\pi i H/z} - \xi^k}\frac{1}{\Gamma(1 - k/5)^5} \sum_{l\geq 0} t^{k + 5l} \frac{\Gamma((k + 5l)/5)^5}{\Gamma(k/5)\Gamma(k + 5l)}.
\]
On the other hand 
\[
tI^{(W,G)}_\jw(t,z) = \sum_{k = 1, 2, 3, 4} \phi_{\jw^{k}} z^{2 - k} \sum_{l\geq 0} t^{k + 5l} \frac{\Gamma((k + 5l)/5)^5}{\Gamma(k/5)\Gamma(k + 5l)}.
\]
Thus the transformation 
\[\UU_{\jw^{k}}: \phi_{\jw^{k}} \mapsto z^{k-1} \frac{\Gamma(1 + H/z)^5}{\Gamma(1 + 5H/z)} \frac{(-\xi)^k 2 \pi i }{e^{-2\pi i H/z} - \xi^k}\frac{1}{\Gamma(1 - k/5)^5}\]
sends $\frac{t}{5}I^{(W,G)}_{\jw}(t,z)$ to $I^{\cW '}_e(t, z)$.  

\subsubsection{$g = (0,0,0,\tfrac 25,\tfrac 35)$} In this case
\[
\begin{split}
I^{\cW'}_g(t,z)  = z\ii_g &   \frac{\Gamma(1 + H/z)^3 \Gamma( 2/5 + H/z)\Gamma(3/5 + H/z)}{5 \Gamma(1 + 5H/z )} 
\cdot \\
	 \Bigg( & \frac{(-\xi) 2\pi i }{e^{-2 \pi i H/z} - \xi} \frac{1}{\textstyle  \Gamma(1-1/5)^3  \Gamma(1-3/5)\Gamma(1-4/5)} \cdot \\
	& \sum_{l \geq 0} t^{1 + 5l} \frac{\Gamma((1 + 5l)/5)^3\Gamma((3+ 5l)/5)\Gamma((4 + 5l)/5)}{\Gamma(1/5)^3\Gamma(3/5)\Gamma(4/5)\Gamma(1 + 5l)}\\
	& +\frac{(-\xi)^4 2\pi i }{e^{-2 \pi i H/z} - \xi^4} \frac{1}{\textstyle  \Gamma(1-4/5)^3  \Gamma(1-6/5)\Gamma(1-7/5)} \cdot \\
	& \sum_{l \geq 0} t^{4 + 5l} \frac{\Gamma((4 + 5l)/5)^3\Gamma((6+ 5l)/5)\Gamma((7+ 5l)/5)}{\Gamma(4/5)^3\Gamma(6/5)\Gamma(7/5)\Gamma(4 + 5l)}\Bigg).
\end{split}
\]
Using the relation $\Gamma(1+x)=x\Gamma(x)$, we can rewrite the last summand, which gives us
\[
\begin{split}
I^{\cW'}_g(t,z)  = z\ii_g &   \frac{\Gamma(1 + H/z)^3 \Gamma( 2/5 + H/z)\Gamma(3/5 + H/z)}{5 \Gamma(1 + 5H/z )} 
\cdot \\
 \Bigg( &\frac{(-\xi) 2\pi i }{e^{-2 \pi i H/z} - \xi} \frac{1}{\textstyle  \Gamma(1-1/5)^3  \Gamma(1-3/5)\Gamma(1-4/5)}  
\cdot \\
& \sum_{l \geq 0} t^{1 + 5l} \frac{\Gamma((1 + 5l)/5)^3\Gamma((3+ 5l)/5)\Gamma((4 + 5l)/5)}{\Gamma(1/5)^3\Gamma(3/5)\Gamma(4/5)\Gamma(1 + 5l)}\\
& + \frac{(-\xi)^4 2\pi i }{e^{-2 \pi i H/z} - \xi^4} \frac{1}{\textstyle  \Gamma(1-4/5)^3  \Gamma(1-1/5)\Gamma(1-2/5)}  
\cdot \\
& \frac{2}{25} \sum_{l \geq 0} t^{4 + 5l} \frac{\Gamma((4 + 5l)/5)^3\Gamma((6+ 5l)/5)\Gamma((7+ 5l)/5)}{\Gamma(4/5)^3\Gamma(6/5)\Gamma(7/5)\Gamma(4 + 5l)}\Bigg).
\end{split}
\]
If $h = (\tfrac 15,\tfrac 15, \tfrac 15,\tfrac 35,\tfrac 45)$ and $h_1 = (\tfrac 45,\tfrac 45,\tfrac 45,\tfrac 15, \tfrac25 )$, we see that the transformation 
\begin{align*}
&\begin{split}
\UU_h: \phi_h \mapsto \ii_g   &\frac{\Gamma(1 + H/z)^3 \Gamma( 2/5 + H/z)\Gamma(3/5 + H/z)}{ \Gamma(1 + 5H/z )} \cdot \\
& \frac{(-\xi) 2\pi i }{e^{-2 \pi i H/z} - \xi} \frac{1}{\textstyle  \Gamma(1-1/5)^3  \Gamma(1-3/5)\Gamma(1-4/5)},
\end{split}\\
&\begin{split}
\UU_{h_1}: \phi_{h_1} \mapsto  z \ii_g &\frac{\Gamma(1 + H/z)^3 \Gamma( 2/5 + H/z)\Gamma(3/5 + H/z)}{ \Gamma(1 + 5H/z )} \cdot \\ 
&  \frac{(-\xi)^4 2\pi i }{e^{-2 \pi i H/z} - \xi^4} \frac{1}{\textstyle  \Gamma(1-4/5)^3  \Gamma(1-1/5)\Gamma(1-2/5)} \end{split}
\end{align*}
sends $\frac{t}{5}I^{(W,G)}_h(t, z)$ to $I^{\cW '}_g(t,z)$.

\subsubsection{$g = (0,0,0,\tfrac 15,\tfrac 45)$} 
\[
\begin{split}
I^{\cW'}_g(t,z)  
 = z\ii_g &   \frac{\Gamma(1 + H/z)^3 \Gamma( 1/5 + H/z)\Gamma(4/5 + H/z)}{5 \Gamma(1 + 5H/z )}\cdot \\
	\Bigg( & \frac{(-\xi)^2 2\pi i }{e^{-2 \pi i H/z} - \xi^2} \frac{1}{\textstyle  \Gamma(1-2/5)^3  \Gamma(1-3/5)\Gamma(1-1/5)}\\
	& \left(-\tfrac{1}{5}\right)\sum_{l \geq 0} t^{2 + 5l} \frac{\Gamma((2 + 5l)/5)^3\Gamma((3+ 5l)/5)\Gamma((1 + 5l)/5)}{\Gamma(2/5)^3\Gamma(3/5)\Gamma(1/5)\Gamma(1 + 5l)}\\
	&+\frac{(-\xi)^3 2\pi i }{e^{-2 \pi i H/z} - \xi^3} \frac{1}{\textstyle  \Gamma(1-3/5)^3  \Gamma(1-4/5)\Gamma(1-2/5)}\cdot \\
	& \left(-\tfrac{1}{5}\right)\sum_{l \geq 0} t^{3 + 5l} \frac{\Gamma((3 + 5l)/5)^3\Gamma((4+ 5l)/5)\Gamma((2+ 5l)/5)}{\Gamma(3/5)^3\Gamma(4/5)\Gamma(2/5)\Gamma(2 + 5l)}\Bigg).
\end{split}
\]
If $h = (\tfrac 25,\tfrac 25,\tfrac 25,\tfrac 35,\tfrac 15)$ and $h_1 = (\tfrac 35,\tfrac 35,\tfrac 35,\tfrac 45,\tfrac 25)$, the transformation
\[
\begin{split}
\UU_h: \phi_h \mapsto \ii_g   &  \frac{\Gamma(1 + H/z)^3 \Gamma( 1/5 + H/z)\Gamma(4/5 + H/z)}{ \Gamma(1 + 5H/z )} 
\cdot \\
& \frac{-(\xi)^2 2\pi i }{e^{-2 \pi i H/z} - \xi^2} \frac{1}{\textstyle  \Gamma(1-2/5)^3  \Gamma(1-3/5)\Gamma(1-1/5)},\end{split}
\]
\[
\begin{split}
\UU_{h_1}: \phi_{h_1} \mapsto  z \ii_g & \frac{\Gamma(1 + H/z)^3 \Gamma( 1/5 + H/z)\Gamma(4/5 + H/z)}{ \Gamma(1 + 5H/z )} 
\cdot  \\ 
&  \frac{(\xi)^3 2\pi i }{e^{-2 \pi i H/z} - \xi^3} \frac{1}{\textstyle  \Gamma(1-3/5)^3  \Gamma(1-4/5)\Gamma(1-2/5)} \end{split}
\] 
sends $\frac{t^2}{25}I^{(W,G)}_h(t, z)$ to $I^{\cW '}_g(t,z)$.

\subsubsection{$g = (0,0,\tfrac 15,\tfrac 15,\tfrac 35)$}  Letting $g_1 = (\tfrac45, \tfrac45, 0,0, \tfrac25)$,
\[
 \begin{split} 
  I^{\cW'}_g(t,z) =z\frac{\ii_g}{5}&  \Gamma(1 - 1/5)^2 \Gamma(1 - 3/5)
\cdot \\
	& \Bigg( \frac{(-\xi) 2\pi i }{1 - \xi} \frac{1}{\textstyle  \Gamma(1-1/5)^2 \Gamma(1 - 2/5)^2 \Gamma(1-4/5)} \cdot \\
	& \sum_{l \geq 0} t^{1 + 5l} \frac{\Gamma((1 + 5l)/5)^2\Gamma((2+ 5l)/5)^2\Gamma((4 + 5l)/5)}{\Gamma(1/5)^2\Gamma(2/5)^2\Gamma(4/5)\Gamma(1 + 5l)}\\
	&+ \frac{(-\xi)^3 2\pi i }{1 - \xi^3} \frac{1}{\textstyle  \Gamma(1-3/5)^2 \Gamma(1 - 4/5)^2 \Gamma(1-1/5)} \cdot \\
	& \left(-\tfrac{1}{5}\right) \sum_{l \geq 0} t^{3 + 5l} \frac{\Gamma((3 + 5l)/5)^2\Gamma((4+ 5l)/5)^2\Gamma((6 + 5l)/5)}{\Gamma(3/5)^2\Gamma(4/5)^2\Gamma(6/5)\Gamma(3 + 5l)}\Bigg) \\
 	+ \frac{\ii_{g_1}}{5} &\Gamma(1/5)^2 \Gamma(3/5)\cdot \\
	& \Bigg( \frac{(-1)\xi^{2} 2\pi i }{1 - \xi^{2}} \frac{1}{\textstyle  \Gamma(1-1/5)^2 \Gamma(1 - 2/5)^2 \Gamma(1-4/5)} \cdot \\
	& \sum_{l \geq 0} t^{1 + 5l} \frac{\Gamma((1 + 5l)/5)^2\Gamma((2+ 5l)/5)^2\Gamma((4 + 5l)/5)}{\Gamma(1/5)^2\Gamma(2/5)^2\Gamma(4/5)\Gamma(1 + 5l)} \\
	& + \frac{(-1)^3\xi^{4} 2\pi i }{1 - \xi^{4}} \frac{1}{\textstyle  \Gamma(1-3/5)^2 \Gamma(1 - 4/5)^2 \Gamma(1-1/5)} \cdot \\
	& \left(-\tfrac{1}{5}\right) \sum_{l \geq 0} t^{3 + 5l} \frac{\Gamma((3 + 5l)/5)^2\Gamma((4+ 5l)/5)^2\Gamma((6 + 5l)/5)}{\Gamma(3/5)^2\Gamma(4/5)^2\Gamma(6/5)\Gamma(3 + 5l)}\Bigg).
 \end{split}
\]
Letting $h = (\tfrac 15,\tfrac 15,\tfrac 25,\tfrac 25,\tfrac 45)$ and $h_1 = (\tfrac 35,\tfrac 35,\tfrac 45,\tfrac 45,\tfrac 15)$,  
 the transformation
 \[
 \begin{split}
\UU_h: \phi_h \mapsto  \ii_g & \Gamma(1 - 1/5)^2 \Gamma(1 - 3/5)
\cdot \\
	&  \frac{(-\xi) 2\pi i }{1 - \xi} \frac{1}{\textstyle  \Gamma(1-1/5)^2 \Gamma(1 - 2/5)^2 \Gamma(1-4/5)}\\
	+   \ii_{g_1} &  \frac{\Gamma(1/5)^2 \Gamma(3/5)}{ z}\cdot \\
	& \frac{(-1)\xi^{2} 2\pi i }{1 - \xi^{2}} \frac{1}{\textstyle  \Gamma(1-1/5)^2 \Gamma(1 - 2/5)^2 \Gamma(1-4/5)},\end{split}
\]
\[
\begin{split}
\UU_{h_1}: \phi_{h_1} \mapsto  z \ii_g & \Gamma(1 - 1/5)^2 \Gamma(1 - 3/5) \cdot \\
	&  \frac{(\xi)^3 2\pi i }{1 - \xi^3} \frac{1}{\textstyle  \Gamma(1-3/5)^2 \Gamma(1 - 4/5)^2 \Gamma(1-1/5)}\\
	+  \ii_{g_1} & \Gamma(1/5)^2 \Gamma(3/5)\cdot \\
	& \frac{\xi^{4} 2\pi i }{1 - \xi^{4}} \frac{1}{\textstyle  \Gamma(1-3/5)^2 \Gamma(1 - 4/5)^2 \Gamma(1-1/5)},
\end{split}
\]
sends $\frac{t}{5}I^{(W,G)}_h(t, z)$ to $I^{\cW '}_g(t,z)$.

\subsubsection{$g = (0,0,\tfrac 25,\tfrac 25,\tfrac 15)$} Letting $g_1 = (\tfrac35, \tfrac35, 0,0, \tfrac45)$,
\[
\begin{split} 
I^{\cW'}_g(t,z)
	=z\frac{\ii_g}{5} &  \Gamma(1 - 2/5)^2 \Gamma(1 - 1/5) \cdot \\
	\Bigg( & 
%k=2	
	\frac{(-\xi)^2 2\pi i }{1 - \xi^2} \frac{1}{\textstyle  \Gamma(1-2/5)^2 \Gamma(1 - 4/5)^2 \Gamma(1-3/5)} \cdot \\
	& \sum_{l \geq 0} t^{2 + 5l} \frac{\Gamma((2 + 5l)/5)^2\Gamma((4+ 5l)/5)^2\Gamma((3 + 5l)/5)}{\Gamma(2/5)^2\Gamma(4/5)^2\Gamma(3/5)\Gamma(2 + 5l)} \\
%k=1
	& \frac{(-\xi) 2\pi i }{1 - \xi} \frac{1}{\textstyle  \Gamma(1-1/5)^2 \Gamma(1 - 3/5)^2 \Gamma(1-2/5)} \cdot \\
% k=1
	& \sum_{l \geq 0} t^{1 + 5l} \frac{\Gamma((1 + 5l)/5)^2\Gamma((3+ 5l)/5)^2\Gamma((2 + 5l)/5)}{\Gamma(1/5)^2\Gamma(3/5)^2\Gamma(2/5)\Gamma(1 + 5l)}\Bigg) \\
	+ \frac{\ii_{g_1}}{5} &\Gamma(2/5)^2 \Gamma(1/5)\cdot \\
 %k=2
	& \Bigg( \frac{(-1)^2\xi^{4} 2\pi i }{1 - \xi^{4}} \frac{1}{\textstyle  \Gamma(1-2/5)^2 \Gamma(1 - 4/5)^2 \Gamma(1-3/5)} \cdot \\
	& \sum_{l \geq 0} t^{2 + 5l} \frac{\Gamma((2 + 5l)/5)^2\Gamma((4+ 5l)/5)^2\Gamma((3 + 5l)/5)}{\Gamma(2/5)^2\Gamma(4/5)^2\Gamma(3/5)\Gamma(2 + 5l)} \\
%k=1
	& +\frac{(-1)\xi^{3} 2\pi i }{1 - \xi^{3}} \frac{1}{\textstyle  \Gamma(1-1/5)^2 \Gamma(1 - 3/5)^2 \Gamma(1-2/5)} \cdot \\
% k=1 
	&\sum_{l \geq 0} t^{1 + 5l} \frac{\Gamma((1 + 5l)/5)^2\Gamma((3+ 5l)/5)^2\Gamma((2 + 5l)/5)}{\Gamma(1/5)^2\Gamma(3/5)^2\Gamma(2/5)\Gamma(1 + 5l)}\Bigg).
 \end{split}
\]

Letting $h = (\tfrac 15,\tfrac 15,\tfrac 35,\tfrac 35,\tfrac 25)$ and $h_1 = (\tfrac 25,\tfrac 25,\tfrac 45,\tfrac 45,\tfrac 35)$, 
 the transformation
\[
\begin{split}
\UU_h: \phi_h \mapsto & \ii_g     \Gamma(1 - 2/5)^2 \Gamma(1 - 1/5)
\cdot \\
	&  \frac{(-\xi) 2\pi i }{1 - \xi} \frac{1}{\textstyle  \Gamma(1-1/5)^2 \Gamma(1 - 3/5)^2 \Gamma(1-2/5)} \\
	+ &  \ii_{g_1}  \frac{\Gamma(2/5)^2 \Gamma(1/5)}{ z}\cdot \\
	& \frac{(-1)\xi^{3} 2\pi i }{1 - \xi^{3}} \frac{1}{\textstyle  \Gamma(1-1/5)^2 \Gamma(1 - 3/5)^2 \Gamma(1-2/5)} ,
\end{split}
\]
\[
\begin{split}
\UU_{h_1}: \phi_{h_1} \mapsto  &z \ii_g  \Gamma(1 - 2/5)^2 \Gamma(1 - 1/5) \cdot \\
	&  \frac{(-\xi)^2 2\pi i }{1 - \xi^2} \frac{1}{\textstyle  \Gamma(1-2/5)^2 \Gamma(1 - 4/5)^2 \Gamma(1-3/5)} \\
	+&  \ii_{g_1} \Gamma(2/5)^2 \Gamma(1/5)\cdot \\
	& \frac{(-1)^2\xi^{4} 2\pi i }{1 - \xi^{4}} \frac{1}{\textstyle  \Gamma(1-2/5)^2 \Gamma(1 - 4/5)^2 \Gamma(1-3/5)},
	\end{split}
\]
sends $\frac{t}{5}I^{(W,G)}_h(t, z)$ to $I^{\cW '}_g(t,z)$.

\subsubsection{Putting things together}

The above calculations define a map \[\UU_h : \phi_h \to \sV^{\cW}\] for each $h \in \hat{S}$.
Extending linearly, we may define the transformation $\UU$,
\[
\UU := \bigoplus_{h \in \hat{S}} \UU_h: \sV^{(W,G)} \to \sV^{\cW}.
\]

Expressing $\UU$ in terms of the bases 
\[
\{\phi_h\}_{h \in \hat{S}} \text{ and }\{\ii_g, \ii_g {H}, \ldots , \ii_g {H}^{\dim(\cW_g)}\}_{g \in \tilde{S}},
\] 
$\UU$ takes the form of a block matrix
which is zero away from the diagonal blocks.  The first diagonal block (corresponding to the non--twisted sector of $\cW$) is size $4 \times 4$ and all others are $2 \times 2$.  Each block is nonsingular, thus $\UU$ is also.

Furthermore, one can check via a direct calculation on blocks that $\UU$ is symplectic.  This proves the following.
\begin{theorem}\label{t:U}
There is a $\CC[z, z^{-1}]$--valued degree--preserving symplectic transformation $\UU$ identifying $\sV^{(W,G)}$ with $\sV^{\cW}$.  Furthermore, for $h \in \hat{S}$ satisfying $\deg(\phi_h) \leq 2$, 
\[
\UU\left(c_h \cdot I^{(W,G)}_h(t, z)\right) = I^{\cW '}_{\mu^{-1}(h)}(t,z)
\]
where $c_h$ is the factor $\frac{t}{5}$ or $\frac{t^2}{25}$ depending on $h$.
\end{theorem}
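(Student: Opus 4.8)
The plan is to assemble the block maps $\UU_h$ constructed in the preceding subsections into the single transformation $\UU = \bigoplus_{h \in \hat{S}} \UU_h$ and then verify the three asserted properties—that it is $\CC[z,z^{-1}]$-valued, degree-preserving, and symplectic—together with the intertwining relation. The intertwining relation $\UU(c_h I^{(W,G)}_h(t,z)) = I^{\cW'}_{\mu^{-1}(h)}(t,z)$ is already established componentwise: for $g=e$ and for each representative $g$ with $\dim(\cW_g) \in \{0,1\}$ the explicit analytic continuation of $I^{\cW}_g$ and the formula for $I^{(W,G)}_h$ were matched term-by-term, and the remaining sectors follow by permuting coordinates. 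I would begin by recording that each such matching reduces to an identity between the hypergeometric-type coefficients, which follows from the Gamma reflection formula $\Gamma(x)\Gamma(1-x) = \pi/\sin(\pi x)$ and the shift $\Gamma(1+x) = x\Gamma(x)$ already used above; applying $\UU$ term-by-term to the two-summand expression for $I^{(W,G)}_h$ then reproduces $I^{\cW'}_{\mu^{-1}(h)}$ exactly.

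Next I would check that $\UU$ is genuinely $\CC[z,z^{-1}]$-valued and degree-preserving. The key observation is that on each component $\cW_g$ the hyperplane class $H$ is nilpotent, since $\dim(\cW_g) \le 3$; hence the Gamma quotients such as $\Gamma(1+H/z)^5/\Gamma(1+5H/z)$ expand as polynomials in $H/z$, and the factor $(e^{-2\pi i H/z} - \xi^k)^{-1}$ is well defined because its constant term $1 - \xi^k$ is nonzero whenever $k \not\equiv 0 \imod 5$, its inverse again being a polynomial in $H/z$. Consequently each entry of $\UU$ lies in $\CC[z,z^{-1}]$ with cohomology-valued coefficients. For degree preservation I would track the explicit powers of $z$ in each $\UU_h$ against the $W$-degree of $\phi_h$: since $\mu$ preserves degree, $H/z$ has degree zero, and each factor $z$ contributes degree two, a short case check (for instance $\deg_W \phi_{\jw^k} = 2(k-1)$ matching the prefactor $z^{k-1}\ii_e$) shows the gradings agree on every block.

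It then remains to prove that $\UU$ is an isomorphism and symplectic. Writing $\UU$ in the bases $\{\phi_h\}$ and $\{\ii_g H^j\}$ produces a block-diagonal matrix, with a single $4 \times 4$ block over the untwisted sector and $2\times 2$ blocks over the twisted sectors; each block is nonsingular by inspection of its leading ($z$-degree) terms, so $\UU$ is invertible. For the symplectic property I would use that the residue pairing $\Omega_\square(f_1,f_2) = \Res_{z=0}\langle f_1(-z), f_2(z)\rangle$ couples the sector of $h$ only with that of $h^{-1}$ (equivalently $\cW_g$ only with $\cW_{g^{-1}}$), so that the identity $\Omega_\cW(\UU f_1, \UU f_2) = \Omega_{(W,G)}(f_1,f_2)$ decouples into finitely many pairings between a block over $g$ and the block over $g^{-1}$. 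Each of these is a direct residue computation in which the cohomological part is evaluated by the Poincaré pairing $\int_{\cW_g} H^{\dim \cW_g}$ and the $z$-dependence by the residue.

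The main obstacle is precisely this last symplectic check. Because $\mu$ preserves the state-space pairing only up to a constant, one must verify that the accumulated prefactors in the $\UU_h$—the products of Gamma values, the roots of unity $\xi^k$, and the analytic-continuation constants $2\pi i/(e^{-2\pi i H/z} - \xi^k)$—together with the compensating powers of $z$ combine so that the residue pairing is reproduced exactly rather than merely up to scale. Carrying this out block by block, and confirming in particular that the $g$- and $g^{-1}$-blocks are mutually adjoint with respect to $\Omega$, is where the bulk of the computation lies; once it is done, symplecticity of the direct sum is immediate.
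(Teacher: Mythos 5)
Your proposal follows the paper's proof essentially verbatim: assemble $\UU = \bigoplus_{h} \UU_h$ from the block maps already constructed, observe that the intertwining relation holds by construction of those blocks (with permuted sectors handled by permuting coordinates), conclude invertibility from the block-diagonal structure with nonsingular blocks, and verify symplecticity by a direct block-by-block residue computation pairing the $g$- and $g^{-1}$-sectors. The extra detail you supply---nilpotency of $H$ giving $\CC[z,z^{-1}]$-valued entries, and the explicit degree bookkeeping---only fleshes out checks the paper leaves implicit, so the approach is the same.
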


\subsubsection{The main theorem}

By equation~\eqref{e:Jgens}, the slice $\sL^{\cW} \cap L_{J^{\cW}(s,-z)}$ of the ruling is generated  by $z\frac{\partial}{\partial s^i} J^{\cW}(s,z)$ where $i$ runs over a basis of $H^{even}_{CR}(\cW)$.  Thus the small slice  $\sL^{\cW}_{small}$ of the Lagrangian cone is completely determined by the first derivatives of $J^{\cW}(\bs, z)$ evaluated at points $sH \in H^2(\cW)$.  This implies the following:

\begin{lemma}
The small slice of the Lagrangian cone $\sL^{\cW}$ is determined by \[ \{I^{\cW}_g(q, z)\}_{\{g \in \tilde{G} | \deg{\ii_g} \leq 2\} }.\]
\end{lemma}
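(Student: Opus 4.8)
The plan is to reduce the small slice to the span of the first derivatives of $J^{\cW}$ and then to identify each such derivative with data already contained in the age--$\leq 1$ $I$--functions. First I would recall that, by Definition~\ref{d:small} together with \eqref{e:Jgens} (after the string--equation simplification), $\sL^{\cW}_{small}$ is the union over $s$ of the spans of the vectors $z\frac{\partial}{\partial s^i}J^{\cW}(\bs,-z)\big|_{\bs=sH}$, as $i$ runs over the full basis $\bigcup_{g\in\tilde S}\{\ii_g,\ii_g H,\ldots,\ii_g H^{\dim\cW_g}\}$ of $H^{even}_{CR}(\cW)$. It therefore suffices to express every one of these derivatives in terms of the listed $I$--functions.

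Next I would treat the directions indexed by the fundamental classes $\ii_g$: these derivatives are by definition the functions $J^{\cW}_g(s,z)$, and for $\age(g)\leq 1$ (equivalently $\deg\ii_g\leq 2$) the mirror theorem of \cite{LSh}, recorded in \eqref{e:A-model}, gives $J^{\cW}_g(\sigma(s),z)=I^{\cW}_g(s,z)/H_g(s)$, where the mirror map $\sigma$ and the normalization $H_g$ are recoverable from the low--order terms of $I^{\cW}_g$ --- in exact analogy with how $\tau$, $F_0$, and $G_h$ are read off from $I^{(W,G)}_h$ in Lemma~\ref{l:I} and Corollary~\ref{c:mirror}. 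This settles all age--$\leq 1$ fundamental--class directions.

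The remaining directions are the higher powers $\ii_g H^j$ with $j\geq 1$ and the age--$2$ fundamental classes. Since $\bs$ varies only along the divisor $H$, the partial derivative at $\bs=sH$ agrees with the total derivative $\frac{d}{ds}$, and the vectors $z\frac{\partial}{\partial s^i}J^{\cW}$ form a fundamental solution $S^{\cW}(s,z)$ of the one--variable quantum differential equation $z\frac{d}{ds}S^{\cW}=(H\bullet_s)S^{\cW}$. Because classical multiplication by the untwisted class $H$ preserves each sector $\cW_g$ and sends $\ii_g H^{j}$ to $\ii_g H^{j+1}$, I would recover the directions $\ii_g H^{j}$ by iterating $z\frac{d}{ds}$ on the already--known columns $J^{\cW}_g$, climbing the filtration by powers of $H$. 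For the age--$2$ sectors, each of which is Poincar\'e dual to an age--$1$ sector $g^{-1}\in\tilde S$, I would pin down the associated column of $S^{\cW}$ from the age--$1$ columns using the symplectic (unitarity) relation $\br{S^{\cW}(s,-z)\alpha,S^{\cW}(s,z)\beta}=\br{\alpha,\beta}$; equivalently, one observes, as the explicit computations of Section~\ref{ss:st} make visible, that the analytic continuation of the age--$1$ function $I^{\cW}_g$ already carries a component in its dual age--$2$ sector, so that no information beyond the listed $I$--functions enters.

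The hard part will be this last step. The mirror theorem of \cite{LSh} supplies only the age--$\leq 1$ functions, so the real content of the lemma is that the age--$2$ columns are forced by the age--$1$ data. I expect the crux to be establishing the block structure of $H\bullet_s$ --- diagonal in the sectors at the classical level, with the only relevant quantum corrections coupling an age--$1$ sector to its dual --- and verifying that, together with the symplectic normalization of $S^{\cW}$, this closes up the recursion so that the entire small slice is a function of $\{I^{\cW}_g\}_{\deg\ii_g\leq 2}$.
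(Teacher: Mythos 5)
Your reduction of $\sL^{\cW}_{small}$ to the span of the derivatives $z\frac{\partial}{\partial s^i}J^{\cW}(\bs,-z)\big|_{\bs=sH}$, and your treatment of the age--$\leq 1$ fundamental--class directions via \eqref{e:A-model}, agree with the paper. The gap is in the step you yourself flag as the hard part: the age--two directions. Your primary tool, the unitarity relation $\br{S^{\cW}(s,-z)\alpha,S^{\cW}(s,z)\beta}=\br{\alpha,\beta}$, cannot pin down the missing columns: if $k$ columns of a fundamental solution are unknown, pairing against the known columns determines each unknown column only up to the symplectic--orthogonal complement of the span of the known ones, a $k$--dimensional ambiguity which the remaining quadratic relations among the unknowns do not remove (any symplectic transformation of that complement produces another solution). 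Your ``equivalent'' fallback is not equivalent: the fact that the analytic continuation of $I^{\cW}_g$, for $g$ a zero--dimensional age--one sector, has a nonzero component along an age--two class $\ii_{g_1}$ is a statement about the components of one vector--valued function; by itself it says nothing about the derivative of $J^{\cW}$ in the $\ii_{g_1}$--\emph{direction}, which is what the lemma requires you to control.

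The mechanism that actually closes the argument, and is the paper's proof, uses the quantum (not classical) multiplication by $H$ to build the basis itself: one takes $\{(\nabla^{\cW}_s)^k 1\}_{0\leq k\leq 3}\cup\{\ii_g,\nabla^{\cW}_s\ii_g\}_{\deg \ii_g=2}$ as a basis of $H^{even}_{CR}(\cW)$. Note that your premise that classical multiplication by $H$ preserves each sector is precisely why a classical picture cannot work: on a zero--dimensional sector $H\cdot\ii_g=0$, so it is only the quantum correction that reaches the age--two classes. Concretely, for $g=(0,0,r_1,r_1,r_2)$ one has $\nabla^{\cW}_s\ii_g=\frac1z H*_s\ii_g$ equal to a \emph{nonzero} multiple of $\ii_{g_1}$ with $g_1=(\langle -r_1\rangle,\langle -r_1\rangle,0,0,\langle r_2-r_1\rangle)$ (a permutation of $g^{-1}$, not $g^{-1}$ itself), the nonvanishing being certified by \eqref{e:A-model}: the coefficients of $\frac{d}{ds}J^{\cW}_g(s,z)$ are the coefficients of the $\ii_{g_1}$--part of $I^{\cW}_g$, which are nonzero. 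This is where your observation about the extra component of $I^{\cW}_g$ genuinely enters --- as the nonvanishing of the quantum coupling, not as a substitute for it. Once this set is shown to be a basis, the quantum differential equation converts the partial derivative of $J^{\cW}$ in the direction dual to $\nabla^{\cW}_s\ii_g$ into $\frac{d}{ds}J^{\cW}_g(s,-z)$, and similarly for the directions dual to $(\nabla^{\cW}_s)^k1$; every column of $S^{\cW}$ is then literally an $s$--derivative of an age--$\leq 1$ column, so the whole slice is determined by $\{I^{\cW}_g\}_{\deg\ii_g\leq 2}$ with no appeal to unitarity or to any symplectic normalization.
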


\begin{proof}
For $s \in H^2(\cW)$, each point of $\sL^{\cW} \cap L_{J^{\cW}(s,-z)}$ is of the form 
\[ z\sum_{i \in I} c_i(s, z)  \frac{\partial}{\partial s^i} J^{\cW}(s, -z).\]
where $I$ is a choice of basis for $H^{even}_{CR}(\cW)$.  By choosing a particular basis, we will show that such linear combinations are completely determined by the set $\{I^{\cW}_g(q, z)\}_{\{g \in \tilde{G} | \deg{\ii_g} \leq 2\} }$.

The main result of \cite{LSh} states that after choosing suitable coordinates (i.e. the mirror transformation) the $I$--functions $I^{\cW}_g$ and their derivatives give the rows of the solution matrix of $\nabla^{\cW}_s$ for $\cW$ when restricted to $ H^2(\cW)$.  Here $\nabla^{\cW}_s$ denotes the Dubrovin connection, defined in terms of the quantum cohomology of $\cW$ (see \cite{CK} and \cite{LSh}).  We summarize the content of the theorem below. Consider the subset of 
$H^{even}_{CR}(\cW)$ given by 
\[\{(\nabla^{\cW}_s)^k 1\}_{0 \leq k \leq 3} \cup \{\ii_g, \nabla^{\cW}_s \ii_g\}_{\deg(\ii_g) = 2}.
\]
We can check that this set forms a basis by using properties of the $J$--function.  Note first that the elements of $\{1\} \cup \{\ii_g\}_{\deg(\ii_g) = 2}$ are linearly independent.  For $ s \in H^2_{un}(\cW)$, $\nabla^{\cW}_s \ii_g = \frac{1}{z} H*_s \ii_g$ is a degree four class supported in a particular component of $I\cW$.  If $g = (0,0,0,r_1, r_2)$, $\frac{1}{z} H*_s \ii_g$ is a multiple of $\ii_gH$, and if $g = (0,0,r_1, r_1, r_2)$, $\frac{1}{z} H*_s \ii_g$ is a multiple of $\ii_{g_1}$ where $g_1 = (\langle -r_1\rangle, \langle -r_1\rangle, 0,0,\langle r_2 - r_1\rangle)$.  We can check that these multiples are non--zero by observing that the periods of $\frac{1}{z} H*_s \ii_g$ are obtained as the coefficients of $\frac{d}{ds} J^{\cW}_g(s, z)$ (see Definition~1.4 in \cite{LSh}) which are nonzero by \eqref{e:A-model}.  This shows that $\{\ii_g, \nabla^{\cW}_s \ii_g\}_{\deg(\ii_g) = 2}$ are linearly independent.  Similarly, $(\nabla^z_s)^k 1$ is a nonzero class of degree $k$ supported on the non twisted sector.  We conclude that 
\[
\{(\nabla^{\cW}_s)^k 1\}_{0 \leq k \leq 3} \cup \{\ii_g, \nabla^{\cW}_s \ii_g\}_{\deg(\ii_g) = 2}
\] 
is a set of $204 = \dim(H^{even}_{CR}(\cW))$ linearly independent elements and thus forms a basis.

By definition, for  $\ii_g$ of degree at most $2$,
\[
z\frac{\partial}{\partial s^g} J^{\cW}(\bs, -z)|_s= J^{\cW}_g(s, -z).
\]  
Because the $J$--function satisfies the quantum differential equation (equation~5 in \cite{G3}), if ${s^g}'$ is the dual coordinate to $\nabla^{\cW}_s \ii_g$, we have the following
\[
z\frac{\partial}{\partial {s^g}'} J^{\cW}(\bs, -z)|_s= \frac{d}{ds} J^{\cW}_g(s, -z).
\]
   
Similarly if $s^k$ is dual to $(\nabla^{\cW}_s)^k 1$, 
\[
z\frac{\partial}{\partial s^k} J^{\cW}(\bs, -z)|_s= \left(\frac{d}{ds}\right)^k J^{\cW}_e(s, -z).
\]
Therefore, for $s \in H^2(\cW)$, $\sL^{\cW} \cap L_{J^{\cW}(s,-z)}$ is completely determined by the $\CC[z]$--span of $\{J^{\cW}_g(s, -z)\}_{\age(g) \leq 2}$.

But, by the mirror theorem \eqref{e:A-model}, the span of $J^{\cW}_g(s, -z)$ is equal to the span of $I^{\cW}_g(\sigma^{-1}(s), -z)$ where $\sigma$ is the mirror map. 
\end{proof}

In FJRW theory, we have the analogous result.  

\begin{lemma}
The small slice of the Lagrangian cone $\sL^{(W,G)}$ is determined by 
\[
\{I^{(W,G)}_h(t, z)\}_{\{h \in \hat{S}| \deg(\phi_h) \leq 2, h \neq \jw^2\}}.
\]
\end{lemma}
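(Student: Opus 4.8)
The plan is to mirror the proof of the preceding lemma almost verbatim, transporting every structure through the state--space identification $\mu\colon H^{even}_{CR}(\cW)\to\sH_{W,G}^{nar}$ of Section~\ref{s:ssc} and using Theorem~\ref{t:mirror} in place of the $\cW$--mirror theorem \eqref{e:A-model}. First I would record, exactly as in \eqref{e:Jgens}, that the small slice $\sL^{(W,G)}_{small}$ is the $\CC[z]$--span of the first derivatives $z\frac{\partial}{\partial t^h}J^{(W,G)}(\bt,-z)|_{\bt=t}=J^{(W,G)}_h(t,-z)$ as $h$ ranges over $\hat{S}$. The whole content is therefore to cut this spanning set down to the functions indexed by $\deg_W\phi_h\le 2$ with $h\neq\jw^2$, and then to pass from the $J$'s to the $I$'s.

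Next I would build the FJRW analogue of the basis used in the $\cW$ case. Writing $\nabla_t=\partial_t+\frac1z\,\phi_{\jw^2}*_t$ for the FJRW Dubrovin connection along the small slice (defined from the genus--zero FJRW invariants just as $\nabla^{\cW}_s$ was defined from those of $\cW$), and noting that $\mu$ sends $1\mapsto\phi_\jw$ and $H\mapsto\phi_{\jw^2}$, the candidate basis is
\[
\{(\nabla_t)^k\phi_\jw\}_{0\le k\le3}\ \cup\ \{\phi_h,\ \nabla_t\phi_h\}_{\deg_W\phi_h=2,\ h\neq\jw^2},
\]
the image under $\mu^{-1}$ of $\{(\nabla^{\cW}_s)^k1\}\cup\{\ii_g,\nabla^{\cW}_s\ii_g\}$. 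I would check that it has $204=\dim\sH_{W,G}^{nar}$ linearly independent members: the tower $(\nabla_t)^k\phi_\jw$ consists of nonzero classes of $W$--degree $2k$ supported in the identity sector, and for each degree--two $\phi_h\neq\phi_{\jw^2}$ the class $\nabla_t\phi_h=\frac1z\phi_{\jw^2}*_t\phi_h$ is a nonzero degree--four class. This also explains why $\jw^2$ is excluded: since $\phi_\jw$ is the unit, $\phi_{\jw^2}=z\,\nabla_t\phi_\jw$ already lies in the identity tower, so including it would be redundant.

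I would then use the fact that $J^{(W,G)}$ solves the quantum differential equation to re--express the derivative in each basis direction as an iterated $\tfrac{d}{dt}$--derivative of a low--degree $J$--function: the coordinate dual to $(\nabla_t)^k\phi_\jw$ produces $(\tfrac{d}{dt})^kJ^{(W,G)}_\jw$, and the coordinate dual to $\nabla_t\phi_h$ produces $\tfrac{d}{dt}J^{(W,G)}_h$. Since $J^{(W,G)}_{\jw^2}=\tfrac{d}{dt}J^{(W,G)}_\jw$ by the string equation, this shows that $\sL^{(W,G)}_{small}$ is determined by the $\tfrac{d}{dt}$--derivatives of $\{J^{(W,G)}_h\}_{\deg_W\phi_h\le2,\,h\neq\jw^2}$. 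Finally, Theorem~\ref{t:mirror} and Corollary~\ref{c:mirror} give $J^{(W,G)}_h(\tau(t),-z)=I^{(W,G)}_h(t,-z)/G_h(t)$ with $G_h$ invertible and $\tau$ the mirror map, so after this change of variables the span coincides with that of the degree $\le2$ functions $I^{(W,G)}_h(t,-z)$, which is exactly the claim.

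The main obstacle I anticipate is the linear--independence/spanning step of the second paragraph --- establishing that quantum multiplication by $\phi_{\jw^2}$ generates $\sH_{W,G}^{nar}$ out of the low--degree classes, i.e. that each structure constant appearing in $\frac1z\phi_{\jw^2}*_t\phi_h$ lands nontrivially in the expected degree--four direction. In the $\cW$ argument this nonvanishing was read off from the explicit periods via \eqref{e:A-model}; here I would verify it from the closed--form expressions (i)--(v) for $I^{(W,G)}_h$ together with Theorem~\ref{t:mirror}, which is precisely the point at which the concrete computations of the previous section are needed.
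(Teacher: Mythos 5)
Your proposal is correct and is essentially the paper's own argument: the paper proves this lemma with the single sentence ``The proof is essentially the same as in the previous lemma,'' and your write-up is exactly the intended transcription of that proof to the FJRW side --- the basis $\{(\nabla_t)^k\phi_\jw\}\cup\{\phi_h,\nabla_t\phi_h\}$ obtained via $\mu$, the quantum differential equation to reduce to $\tfrac{d}{dt}$-derivatives of the low-degree $J^{(W,G)}_h$, the redundancy of $\jw^2$, and Theorem~\ref{t:mirror}/Corollary~\ref{c:mirror} replacing \eqref{e:A-model} to pass from the $J$-functions to the $I$-functions.
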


\begin{proof} The proof is essentially the same as in the previous lemma.
\end{proof}

\begin{theorem}\label{t:main}
The symplectic transformation $\UU$ identifies the analytic continuation of the small slice of $\sL^{\cW}$ with the small slice of $\sL^{(W,G)}$.
\end{theorem}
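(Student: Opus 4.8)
The plan is to deduce the theorem formally from the two lemmas just established together with Theorem~\ref{t:U}, requiring no further geometric input. First I would use \eqref{e:Jgens} and the two preceding lemmas to reduce both sides to explicit $I$--functions. The fiber of $\sL^{(W,G)}_{small}$ over a parameter value $t$ is the $\CC[z]$--span of the functions $\{I^{(W,G)}_h(t,-z)\}_{\deg(\phi_h)\le 2,\,h\neq\jw^2}$ and their $t$--derivatives, while the fiber of the analytic continuation of $\sL^{\cW}_{small}$ over the corresponding point is the $\CC[z]$--span of $\{I^{\cW '}_g(t,-z)\}_{\deg(\ii_g)\le 2}$ and their $t$--derivatives. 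Both are written in the common variable $t=q^{-1/5}$ produced by the Mellin--Barnes continuation, so the theorem reduces to comparing these two spanning sets under $\UU$.

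Next I would invoke Theorem~\ref{t:U}, which gives $\UU\!\left(c_h I^{(W,G)}_h(t,z)\right)=I^{\cW '}_{\mu^{-1}(h)}(t,z)$ with $c_h\in\{t/5,\,t^2/25\}$. Because the explicit formulas for $\UU$ act only on the state space and are independent of $t$, the transformation $\UU$ commutes with $\tfrac{d}{dt}$; applying it to $\tfrac{d}{dt}\!\left(c_h I^{(W,G)}_h\right)$ therefore produces $\tfrac{d}{dt}I^{\cW '}_{\mu^{-1}(h)}$. Thus $\UU$ carries the pair $\{c_h I^{(W,G)}_h,\ (c_h I^{(W,G)}_h)'\}$ onto $\{I^{\cW '}_{\mu^{-1}(h)},\ (I^{\cW '}_{\mu^{-1}(h)})'\}$, matching the $I$--functions together with their derivatives.

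I would then absorb the scalar prefactors. The $c_h$, and likewise the normalizing functions $G_h$ and $H_g$ from \eqref{e:mirror} and \eqref{e:A-model}, are invertible functions of $t$ alone, hence nonzero constants with respect to $\CC[z]$--coefficients at any fixed $t\neq0$. A one--line linear--algebra argument then shows that at fixed $t\neq0$ the $\CC[z]$--span of $\{c_h I^{(W,G)}_h,\ (c_h I^{(W,G)}_h)'\}$ agrees with that of $\{I^{(W,G)}_h,\ (I^{(W,G)}_h)'\}$, and similarly on the $\cW$ side. Combined with the previous step, $\UU$ maps each fiber of $\sL^{(W,G)}_{small}$ isomorphically onto the corresponding fiber of the analytic continuation of $\sL^{\cW}_{small}$; letting $t$ vary identifies the two families, and since $\UU$ is symplectic by Theorem~\ref{t:U}, this is the claimed identification of cones.

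The step I expect to be most delicate is the bookkeeping around the changes of variables rather than any computation: I must check that the FJRW mirror map $\tau$ of \eqref{e:mirror} and the Gromov--Witten mirror map $\sigma$ of \eqref{e:A-model} are compatible with the shared continuation variable $t$, so that the fibers being identified truly sit over the same point. This compatibility is essentially forced by the fact that the two lemmas and Theorem~\ref{t:U} are uniformly phrased in the single variable $t$; once it is verified, what remains is the routine span comparison sketched above.
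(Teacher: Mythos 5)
Your proposal is correct and follows exactly the paper's route: the paper's own proof is the one--line statement that the result ``follows immediately from Theorem~\ref{t:U} and the previous two lemmas,'' and your argument is precisely that deduction spelled out (reduction of each small slice to the $\CC[z]$--span of the $I$--functions and their $t$--derivatives, transport under $\UU$ using its $t$--independence, and absorption of the invertible scalar factors $c_h$, $G_h$, $H_g$). The mirror--map compatibility you flag as delicate is indeed harmless for the reason you give: the identification is of the slices as whole families in the common continuation variable $t$, not of fibers matched under $\tau$ and $\sigma$ separately.
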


\begin{proof}
The result follows immediately from Theorem~\ref{t:U} and the previous two lemmas.
\end{proof}

\begin{remark}
 Theorem~\ref{t:main} proves the first part of Conjecture~\ref{conj} restricted to the small parameters $sH \in H^{even}_{CR}(\cW)$ and $t\phi_{\jw^2} \in \sH^{nar}_{W,G}$ (see \ref{ss:slice}).  Note that although we have restricted all calculations to the small parameters, this is enough to completely determine $\UU$.
\end{remark}

\newpage

\bibliographystyle{plain}
\bibliography{references}

\end{document}